\documentclass[10pt,reqno]{amsart}
\usepackage{amsmath,amsthm,amssymb,mathrsfs,stmaryrd,color}
\usepackage[all]{xy}
\usepackage{url}
\usepackage[utf8]{inputenc}
\usepackage[T1]{fontenc}
\usepackage[UKenglish]{babel}
\usepackage[margin=20mm]{geometry}
\usepackage{bbm,bm}
\usepackage{extarrows} 

\usepackage{graphicx}
\usepackage{amsmath,amssymb,amsfonts,amsthm}
\usepackage{mathrsfs,eucal,dsfont}
\usepackage{verbatim,enumitem}
\usepackage{enumitem}
\usepackage{amssymb, amsmath, amsthm, amscd}
\usepackage{eucal,mathrsfs,dsfont}
\usepackage[colorlinks=true,hyperindex, linkcolor=magenta, pagebackref=false, citecolor=cyan,pdfpagelabels]{hyperref}

\newtheorem{theorem}{Theorem}[section]
\newtheorem{lemma}[theorem]{Lemma}

\newtheorem{proposition}[theorem]{Proposition}
\newtheorem{corollary}[theorem]{Corollary}
\numberwithin{equation}{section}

\theoremstyle{definition}

\newtheorem{remark}[theorem]{Remark}

\newcommand\re{{R_e^{0,t}}}
\newcommand\hre{{\hat{R}_e^{0,t}}}
\newcommand{\rmA}{{\mathrm{A}}}

\newcommand{\aAn}{{ \mathrm{A}_N(e)}}

\newcommand{\sO}{\mathbb{G}}
\newcommand\G{{\mathbb G}}

\newcommand\sC{{\mathscr C}}

\newcommand{\rmD}{{\mathrm{D}}}
\newcommand{\rmtlD}{{\tilde{\mathrm{D}}}}

\newcommand{\rmtlTmt}{{\tilde{\mathrm{T}}^t_{M}}}

\newcommand{\rmtlTms}{{\tilde{\mathrm{T}}^s_{M}}}

\newcommand{\rmTmt}{{\mathrm{T}^t_{M}}}

\newcommand{\tlpi}{{\tilde{\pi}}}
\newcommand{\tlpit}{{\tilde{\pi}^{t}}}

\newcommand{\tlpim}{{\tilde{\pi}_M}}

\newcommand{\tlpimt}{{\tilde{\pi}_{M}^{t}}}

\newcommand{\rmT}{{\mathrm{T}}}

\newcommand{\rmtlT}{{\tilde{\mathrm{T}}}}
\newcommand{\rmtlTm}{{\tilde{\mathrm{T}}_M}}

\newcommand\cE{{\kE(\Z^d)}}

\newcommand\pp{{\mathbb P}}

\newcommand{\pr}{\mathbb{P}}
\newcommand\cA{{\mathcal A}}
\newcommand\cB{{\mathcal B}}

\newcommand\kC{{\mathcal C}}

\newcommand\cF{{\mathcal F}}
\newcommand\cG{{\mathcal G}}

\newcommand\kP{{\mathcal P}}
\newcommand\kE{{\mathcal E}}

\newcommand\cU{{\mathcal U}}
\newcommand\cV{{\mathcal V}}

\newcommand\cO{{\mathcal O}}

\newcommand\cQ{{\mathcal Q}}

\newcommand\bH{{\mathbb H}}
\newcommand\N{{\mathbb N}}

\newcommand\Z{{\mathbb Z}}
\newcommand\R{{\mathbb R}}

\newcommand\E{{\mathbb E}}
\newcommand\I{{\mathbb I}}
\newcommand\bO{{\mathbb O}}

\newcommand{\var}{\operatorname{Var}}
\newcommand{\corr}{\operatorname{Corr}}

\newcommand{\influ}{\operatorname{Inf}}

\newcommand{\Inf}{\textrm{Inf}}

\DeclareMathOperator{\Diam}{Diam}

\DeclareMathOperator{\Var}{Var}
\DeclareMathOperator{\Cov}{Cov}

\def\ben#1{\begin{equation}#1\end{equation}}

\begin{document}
\allowdisplaybreaks
\title[Superconcentration and chaos in Bernoulli  percolation]{Superconcentration and chaos in Bernoulli percolation}

\author{Van Quyet Nguyen}
\thanks{\emph{V.Q. Nguyen:}
Institute of Mathematics, Vietnam Academy of Science and Technology, 18 Hoang Quoc Viet, Cau Giay, Hanoi, Vietnam, nvquyet@math.ac.vn}             
\date{}
\maketitle
\begin{abstract}
We study the chemical distance of supercritical Bernoulli percolation on $\Z^d$. Recently, Dembin \cite{dembin2022variance} showed that the chemical distance exhibits sublinear variance, a phenomenon now referred to as superconcentration. In this article, we establish an equivalence between this phenomenon and chaotic behavior of geodesics under small perturbations of the configuration, thereby confirming Chatterjee’s general principle relating anomalous fluctuations to chaos in the context of Bernoulli percolation. Our methods rely on a dynamical version of the effective radius, refining the notion first proposed in \cite{can2024subdiffusive}, in order to measure the co-influence of a given edge whose weight may be infinite. Together with techniques from the theory of lattice animals, this approach allows us to quantify the total co-influence of edges in terms of the overlap between original and perturbed geodesics.
\end{abstract}
\allowdisplaybreaks
\section{Introduction}
\subsection{Model and main result} 
\textit{Bernoulli percolation} is a well-known model for describing the propagation of fluid into a porous medium. This model was introduced by Broadbent and Hammersley in 1957 \cite{broadbent1957percolation} and has been one of the most studied probabilistic models. Let $d \ge 2$, and let $(\mathbb{Z}^d, \cE)$ be the graph where $\mathbb{Z}^d$ is the set of integer lattice points and $\cE$ is the set of nearest-neighbor pairs in $\mathbb{Z}^d$ under the Euclidean norm. Given the parameter $p \in [0,1]$, each edge $e \in \cE$ is chosen randomly to be either retained (\emph{open}) with probability $p$ and deleted (\emph{closed}) otherwise, independently of the state of other edges. Percolation theory is primarily concerned with the geometry of the connected components (\emph{clusters}) of the \emph{percolation graph} $$\cG_p:=(\Z^d, \{e \in \cE: e \text{ is open}\}),$$  
the subgraph formed by the open edges. The most important feature of this model is a simple phase transition: there exists a non-trivial \emph
{critical density} $p_c(d) \in (0,1)$ such that  there is  almost surely a unique infinite open cluster $\kC_\infty$ if $p>p_c(d)$, whereas all open clusters are finite if $p<p_c(d)$. We refer to \cite{grimmett1999percolation,duminil2018sixty} for a survey on the subject. In this paper, we focus on the \textit{chemical distance}, the graph distance in the percolation graph when $p > p_c(d)$.

 For any sets $A,B,U \subset \Z^d$, we define
 \begin{align*}
      \rmD_{U} (A,B) 
      & := \inf \{ |\gamma|: x \in A,\, y\in B,\,\gamma \text{ is an open nearest-neighbor path from } x \text{ to } y \text{ inside } U\}.
 \end{align*}
 When $U = \Z^d$, we simply write $\rmD$ for $\rmD_{\Z^d}$. We observe that the chemical distance between two points is infinite as soon as these points are not in the same open cluster. To overcome this issue, we consider a regularized version as follows:  for each $z \in \Z^d$, we denote by $[z]$ the closest point to $z$ in $\kC_\infty$ for the $\| \cdot \|_{\infty}$-distance, with a deterministic rule breaking ties. This point is known as the regularized point of $z$. Then, we define  
\begin{align*}
 \forall x,y \in \Z^d, \quad \rmtlD(x,y):= \rmD([x],[y]) = \inf \{ |\gamma|: \gamma \text{ is an open nearest-neighbor path from } [x] \text{ to } [y]  \}. 
\end{align*}
The first-order growth of $\rmtlD$ was described by Garet and Marchand \cite{garet2004asymptotic}: for any $p > p_c(d)$, there exists a function $\mu_p: \R^d \to [0,\infty) $ such that,  
\begin{align}\label{time constnat of D*}
  \forall x \in \R^d, \quad  \lim_{n \to \infty} \dfrac{\rmtlD(0,\lfloor nx \rfloor)}{n} = \mu_p(x) \quad \text {a.e and in } L^1,
\end{align}
where $\lfloor \cdot \rfloor$ stands for the floor function. The value $\mu_p(x)$ is called  the \textit{time constant}. Moreover, this time constant is Lipschitz-continuous with respect to $p \in [p_0,1]$ for all $p_0 > p_c(d)$ \cite{cerf2022time,can2023lipschitz}. Naturally, the next question of interest concerns the fluctuation and deviation of the chemical distance. The large deviation of  $\rmtlD$ has been well studied since 1990s, see, for example, \cite{antal1996chemical,garet2007large,dembin2022upper}. As for fluctuations, Dembin established the best current upper bound on the variance of chemical distance (\cite[Theorem 1.1]{dembin2022variance}): there exists a positive constant $C$ such that 
\ben{ \label{subb}
\forall n  \geq 1,\, \forall x \in \Z^d, \quad \Var(\rmtlD(0,nx)) \leq C \frac{n}{ \log n}.
}
This phenomenon was coined \textit{superconcentration} by Chatterjee \cite{chatterjee2014superconcentration}. Roughly speaking, superconcentration refers to situations where a random object exhibits significantly stronger concentration than what is predicted by classical concentration techniques. In fact, classical inequalities, such as the Efron–Stein or Poincaré estimates, typically yield only linear (and thus suboptimal) upper bounds on the order of the variance. Some typical examples of the superconcentration phenomenon can be found in \cite{benjamini2003improved, damron2015sublinear,chatterjee2023superconcentration,dembin2024superconcentration}. In the physics literature, superconcentration, typically investigated via fluctuation exponents, and the existence of \textit{chaos} have been studied for a long time, largely at a heuristic level. For example, the seminal works of Fisher and Huse \cite{fisher1991directed}, Mézard \cite{mezard1990glassy}, Zhang \cite{zhang1987ground} showed that "superconcentration implies chaos" in the context of directed polymers in random environments. Other influential and widely cited contributions to this line of research in spin glass models include the works of Bray and Moore \cite{bray1987chaotic} and  McKay, Berger, and Kirkpatrick \cite{mckay1982spin}. 

The first rigorous mathematical theory of chaos was developed by  Chatterjee \cite{chatterjee2008chaos,chatterjee2009disorder,chatterjee2014superconcentration}, who showed that superconcentration is intimately connected with chaos in certain Gaussian disordered systems. In this framework, the system is said to be chaotic if its ground state is drastically altered by a small perturbation of the energy landscape. Inspired by this work, Ganguly and Hammond \cite{ganguly2024stability} established a sharp phase transition that pinpoints precisely the critical point at which the transition
from stability to chaos occurs in the context of Brownian last passage percolation-a typical example in the class of solvable models. However, the analytic methods in these papers rely essentially on the Gaussian setting. Later, beyond Gaussian environments, this connection was established for the top eigenvector of Wigner matrices by Bordenave, Lugosi, Zhivotovskiy \cite{bordenave2020noise} and in first passage percolation satisfying a finite second moment assumption of edge-weights by Ahlberg, Deijfen, Sfragara \cite{ahlberg2023chaos}. While the popular approach to proving superconcentration relies on hypercontractive inequalities, establishing chaos opens an unexpected path toward significantly improving upper bounds on the variance for non-solvable models; see, for instance,~\cite{basu2023}. The main purpose of the present paper is to demonstrate an analogous connection for the chemical distance in Bernoulli percolation. 
%Roughly speaking, we show that the superconcentration of the chemical distance is equivalent to chaotic phenomenon of its geodesic.
Consequently, together with \eqref{subb}, we claim that the interesting chaotic phenomenon holds true in Bernoulli percolation. 

 We consider Bernoulli percolation as a first passage percolation with the i.i.d. weights $(\omega_e)_{e \in \cE}$ defined as
\begin{align*}
\omega_e =     \begin{cases}
       1, & \text{ with probability } p;\\
        \infty, & \text{ with probability } 1 -p.    \end{cases}
\end{align*}
We emphasize that the finite second moment assumption is no longer true in this first passage percolation. Let $\omega'_e$ be an independent copy of $\omega_e$ for each $e \in \cE$. Let $(U_e)_{e \in \cE}$ be an independent family of independent random variables
uniformly distributed on the interval $[0,1]$. For $t \in [0,1]$, we define a collection of \textit{dynamical} weights $(\omega_e(t))_{e\in \cE}$ as
\begin{align*}
    \omega_e(t) := \begin{cases}
    \omega_e       & \quad \text{if }  U_e \geq t; \\
    \omega'_e  & \quad \text{if } U_e < t,
  \end{cases}
\end{align*}
i.e. $\omega_e(t)$ is obtained by resampling $\omega_e$ with probability $t$. Note that $\omega_e(t)$ has the same distribution as $\omega_e$, with $\pr(\omega_e(t) = 1)= p$ for all $t\geq 0$. Let $\kC_\infty^{t}$ denote the unique infinite cluster in the $t$-noise environment. Let us define the chemical distance with respect to the $t$-noise environment  $ (\omega_e(t))_{e \in \cE}$  as 
$$\rmtlD^t(x,y): =\rmD^t([x]_{t},[y]_{t}),$$
where for all $z \in \Z^d$, the regularized point $[z]_{t}$ denotes the closest point to $z$ in $\kC^{t}_\infty$ (in $\|\cdot\|_\infty$-distance) with a deterministic rule breaking ties. Let $\tlpi^t(0,nx)$ be the \textit{geodesic} of $\rmtlD^t(0,nx)$. If there are several choices for the geodesic, we define $\tlpi^t(0,nx)$ as the intersection of all such geodesics. For convenience, we simply write $\tlpi(0,nx)$ instead of $\tlpi^0(0,nx)$. Let $\vec{p}_c(d)$ be the critical point in oriented bond percolation on $\Z^d$ with remark that $ p_c(d)<  \vec{p}_c(d)$. From now on, we assume that
\begin{align}\label{Useful assump}
\tag{BK} p_{c}(d) < p <     \vec{p}_c(d),
\end{align}
which is known as the Berg–Kesten condition.
Under this assumption, by Proposition \ref{Thm: intersection of geo}, there exists $c \in (0,1)$ such that 
$$cn \leq \E[|\tlpi(0,nx)|]\leq c^{-1}n . $$
We say that the geodesic of $\rmtlD(0,nx)$ is chaotic
    if there exists a sequence $t_n \to 0$ (as $n \to \infty)$ such that 
    \begin{align*}
        \E[| \tlpi(0,nx) \cap \tlpi^{t_n}(0,nx)|] = o(n).
    \end{align*}
Roughly speaking, this means that resampling a small $t$-fraction of the edge states results in only a vanishing fraction of the edges belonging to the original geodesic remaining in the geodesic of the resampled configuration.
   
Our first result establishes a connection between the variance of the chemical distance and the dynamical behavior of the overlap between two geodesics.
\begin{theorem} \label{Theorem: variance and chaos} There exists a positive constant $ C $ such that for all $n \geq  1$ and $x \in \Z^d$,
\begin{align}\label{Thm: lower}
     \frac{1}{C} \int_0^1 \E[|\tlpi(0,nx) \cap \tlpi^{t}(0,nx)|] dt - C(\log n)^{d+8}\leq \var(\rmtlD(0,nx)),
\end{align}
and for all $\alpha> 0$,
\begin{align}\label{Thm: upper}
      \var(\rmtlD(0,nx))\leq C \alpha^2 \int_0^1 \E[|\tlpi(0,nx) \cap \tlpi^{t}(0,nx)|] dt + Ce^{-\alpha/C} n + C (\log n)^{2d+6}.
\end{align}
Consequently, as $n \to \infty$, we obtain
\begin{align*}
    \var(\rmtlD(0,nx)) = o(n) \Longleftrightarrow \exists t_n \to 0, \, \E[|\tlpi(0,nx) \cap \tlpi^{t_n}(0,nx)|] = o(n).
\end{align*}
\end{theorem}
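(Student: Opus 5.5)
The plan is to use the dynamical (Ornstein–Uhlenbeck type) covariance representation in the form used by Chatterjee and Ahlberg–Deijfen–Sfragara. Write $F=\rmtlD(0,nx)$ and $F^t=\rmtlD^t(0,nx)$. For bounded $F$ one has
\begin{align*}
\var(F)=\int_0^1 \sum_{e\in\cE}\E[\Delta_e F\,\Delta_e F^t]\,dt ,
\end{align*}
where $\Delta_e$ denotes the discrete derivative obtained by resampling the weight of $e$. So each side of Theorem~\ref{Theorem: variance and chaos} reduces to comparing the total co-influence $\sum_e\E[\Delta_eF\,\Delta_eF^t]$ with $\E[|\tlpi\cap\tlpi^t|]$.

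Since $\omega_e$ takes the value $\infty$, I would first truncate. Replace $F$ by the chemical distance restricted to a box of side $n(\log n)^{C}$, and force the regularized points $[0]$ and $[nx]$ to lie within distance $(\log n)^{C}$ of $0$ and $nx$. The standard large-deviation estimates for $\rmD$ (Antal–Pisztora type) and the exponential tail for the distance to $\kC_\infty$ make this change negligible in $L^2$. I then bound $\Delta_eF$ using a dynamical effective radius, refining \cite{can2024subdiffusive}. If $e$ lies on the geodesic and is closed, the geodesic has to detour around $e$, and the detour length is controlled by the size of the closed cut surrounding $e$ in both environments $\omega$ and $\omega(t)$. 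Call this quantity $R_e^{0,t}$; it has exponential tails, uniformly in $t$. This gives
\begin{align*}
|\Delta_eF|\le R_e\,\I(e\in\tlpi)+\text{error}.
\end{align*}
The error comes from changes of $[0]$ or $[nx]$, or from $e$ being pivotal for $\kC_\infty$ near the endpoints, and contributes the $(\log n)^{O(d)}$ terms.

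For the upper bound \eqref{Thm: upper}, apply Cauchy–Schwarz edgewise after splitting on the event $\{R_e\le\alpha\}$. On this event,
\begin{align*}
\E[\Delta_eF\,\Delta_eF^t]\le \alpha^2\,\pr(e\in\tlpi\cap\tlpi^t)
\end{align*}
up to the usual symmetrization that turns the derivative at $e$ in both environments into membership of $e$ in both geodesics. Summing over $e$ gives $\alpha^2\E|\tlpi\cap\tlpi^t|$. On $\{R_e>\alpha\}$, I would bound the sum by $\E\sum_{e\in\tlpi}R_e^2\I(R_e>\alpha)$, using lattice-animal techniques. The geodesic is a path of length $O(n)$ from a neighbourhood of $0$, so a greedy lattice animal bound for weights $R_e^2\I(R_e>\alpha)$ with exponential tails gives $Ce^{-\alpha/C}n$.

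For the lower bound \eqref{Thm: lower}, I need $\E[\Delta_eF\,\Delta_eF^t]\ge c\,\pr(e\in\tlpi\cap\tlpi^t)-\text{error}_e$. When $e\in\tlpi$, closing $e$ increases $F$ by at least $1$ unless the geodesic is non-unique through $e$. I defined $\tlpi$ as the intersection of all geodesics, so membership forces the increase to be at least $1$. Choosing the resampled value so that the signs of $\Delta_eF$ and $\Delta_eF^t$ agree, the product is at least $1$ on that event. The main obstacle is that $\Delta_e F$ is an average over the resampled value, so negative contributions come from configurations where opening $e$ creates a shortcut in one environment only. I would handle these by showing that, in the monotone (Bernoulli) setting, $\Delta_eF$ and $\Delta_eF^t$ are both nonpositive functions of $\omega_e$ at the same edge, by monotonicity in $\omega_e$. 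Any residual terms come from the endpoint regularization and are bounded by $(\log n)^{d+8}$ via the truncation estimates.

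The equivalence then follows. If $\var=o(n)$, the lower bound gives $\int_0^1\E|\tlpi\cap\tlpi^t|\,dt=o(n)$, and monotonicity of the overlap in $t$ (again from the OU semigroup structure) produces $t_n\to0$. Conversely, take $\alpha$ large and use the fact that the overlap is small on $[t_n,1]$ while $[0,t_n]$ contributes only $O(t_n n)$.
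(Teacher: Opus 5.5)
Your architecture matches the paper's. It has the covariance representation at $t=0$, integer-valuedness plus the ``intersection of all geodesics'' convention for the lower bound, a dynamical effective radius with lattice animals for the upper bound, and monotonicity of the overlap in $t$ for the equivalence. The genuine difference is how the weight $\infty$ is handled.

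The paper truncates the \emph{weight}, not space. It replaces $\infty$ by $M=\lfloor(\log n)^2\rfloor$ and shows (Proposition~\ref{Prop: comparison of pitl and pikm}) that the geodesic sets of $\rmtlD^t(0,nx)$ and $\rmtlT^t_M(0,nx)$ coincide with probability $1-Ce^{-M^{3/4}/C}$, uniformly in $t$. It then transfers variance, co-influences and overlaps. This buys two things:
\begin{itemize}
\item the effective radius of \cite{can2024subdiffusive}, built for the truncated model via the modified geodesics $\bH^t_M$, can be imported with its local dependence and tail bounds;
\item the trivial bound $\nabla^{M,1}_e\rmtlT_M\le M$ handles edges near $[0]_s,[nx]_s$ or outside $\cQ^0_e\cap\cQ^t_e$, which is where the polylogarithmic errors come from.
\end{itemize}
The price is an error $\alpha^2 O(e^{-M^{3/4}})$ when passing from truncated to true overlaps. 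The paper absorbs it using \eqref{Useful assump} and $\E[|\tlpi(0,nx)\cap\tlpi^t(0,nx)|]\ge c'n^{2-d}$.

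Working with $\rmtlD$ directly is viable and avoids that step. Lemma~\ref{Lemma: covariacne formula} already holds for $L^2$ functionals with $L=\infty$, so your spatial truncation is unnecessary. If you keep it, it reintroduces an $\alpha^2\times(\text{small})$ overlap-comparison error. The cost is that you must build the radius yourself, which requires three things:
\begin{itemize}
\item The right notion of radius. A closed-cut size localizes the detour but does not bound its length. You need the scale $N$ at which, in both environments, any two open crossings of $\Lambda_{3N}(e)\setminus\Lambda_N(e)$ are joined by an open path of length $O(N)$ inside the annulus. This is the analogue of $\cV^s_N(e)\cap\mathcal{W}^s_N(e)$, and it is simpler here because geodesics are open and Lemma~\ref{lem: twodisjointclusters} applies.
\item Local dependence of $\{R_e\le\ell\}$ on edges in $\Lambda_{C\ell}(e)$. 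The lattice-animal lemmas require this in addition to tails.
\item A tail bound on $\nabla^{\infty,1}_e\rmtlD(0,nx)$ near the endpoints, to replace the crude bound $M$.
\end{itemize}

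Three steps should also be stated correctly.
\begin{itemize}
\item By Lemma~\ref{Lem: Efx1x2=El1}, the co-influence equals $p(1-p)\E[\nabla^{\infty,1}_eF\,\nabla^{\infty,1}_eF^t]$, and both factors are ``closed minus open''. So off the regularization events there are no mixed-sign contributions at all.
\item Your bound $|\Delta_eF|\le R_e\I(e\in\tlpi(0,nx))$ is false when $\omega_e=\infty$, because opening $e$ may create a shortcut. The correct statement is $0\le\nabla^{\infty,1}_eF\le CR_e\,\I(e\in\tlpi(0,nx))$, with $\tlpi(0,nx)$ computed in $\sigma^1_e(\omega)$, on $\cQ^0_e$ and with $e$ far from the endpoints. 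You pass to $\I(e\in\tlpi(0,nx)\cap\tlpi^t(0,nx))$ by inserting $\I(\omega_e=\omega_e(t)=1)$ at cost $p^{-2}$. This is legitimate because the product of derivatives is independent of $(\omega_e,\omega_e(t))$.
\item In the upper bound you must multiply the two pointwise bounds. An edgewise Cauchy--Schwarz would replace the joint event $\{e\in\tlpi(0,nx)\cap\tlpi^t(0,nx)\}$ by $\{e\in\tlpi(0,nx)\}$ and lose the overlap entirely.
\end{itemize}
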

For any $t \geq 0$, let us define 
\begin{align*}
    \corr(\rmtlD(0,nx),\rmtlD^{t}(0,nx))
    := \frac{\Cov(\rmtlD(0,nx),\rmtlD^{t}(0,nx))}{\var(\rmtlD(0,nx))}.
\end{align*}
The next result characterizes the stable and chaotic regimes of the chemical distance. 

\begin{theorem}\label{theorem: main} Let $\hat{t}_n := \frac{\Var(\rmtlD(0,nx))}{n}$. Then there exists a positive constant $C$ such that for all $ 0 < \beta < 1/\hat{t}_n$ and $x \in \Z^d$,
\begin{itemize}
    \item [(i)] (Stable regime) if $ \frac{t_n} {\hat{t}_n}\leq \beta  $ then 
    $$\corr(\rmtlD(0,nx),\rmtlD^{t_n}(0,nx)) \geq 1 - C \beta .$$
\item [(ii)] (Chaotic regime) if $ \frac{t_n} {\hat{t}_n} \geq \beta$ then
    $$\E[|\tlpi(0,nx) \cap \tlpi^{t_n}(0,nx)|] \leq C\left(\frac{n}{\beta} + (\log n)^{d+8}\right).
    $$
Consequently, if $t_n \to 0$ and $t_n \log n \to \infty $ as $n \to \infty$ then
\begin{align*}
    \E[|\tlpi(0,nx) \cap \tlpi^{t_n}(0,nx)|] = o(n).
\end{align*}
\end{itemize}
\end{theorem}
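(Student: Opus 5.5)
The plan is to work in the Fourier–Walsh expansion of $f:=\rmtlD(0,nx)$ with respect to the i.i.d.\ Bernoulli variables $(\omega_e)$, and to reuse the two comparison estimates that drive the proof of Theorem \ref{Theorem: variance and chaos}. We have $f\in L^2$, since $\rmtlD$ has exponential tails by the large deviation results quoted in the introduction, so $f=\sum_S \hat f(S)\chi_S$. The dynamics $\omega(t)$ is the standard noise operator, so
\begin{align*}
\Cov(f,f^t)=\sum_{S\neq\emptyset}(1-t)^{|S|}\hat f(S)^2 .
\end{align*}
Hence $t\mapsto \Cov(f,f^t)$ is nonnegative and nonincreasing. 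Moreover
\begin{align*}
-\tfrac{d}{dt}\Cov(f,f^t)=\sum_S|S|(1-t)^{|S|-1}\hat f(S)^2=\tfrac{1}{1-t}\sum_e \E[\Delta_e f\,\Delta_e f^t],
\end{align*}
which is the total co-influence, and it is itself nonincreasing in $t$.

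\emph{Step 1 (stable regime).} Using $1-(1-t)^{k}\le tk$, I get
\begin{align*}
\Var(f)-\Cov(f,f^{t})\le t\sum_S|S|\hat f(S)^2=t\sum_e\E[(\Delta_e f)^2].
\end{align*}
The right-hand side is $t$ times the total influence. I would bound it by $Cn$, using the effective-radius control of the influence of a single edge from the proof of Theorem \ref{Theorem: variance and chaos}, summed over edges and combined with $\E|\tlpi(0,nx)|\le c^{-1}n$ from Proposition \ref{Thm: intersection of geo}. This gives
\begin{align*}
1-\corr(f,f^{t_n})\le \frac{Ct_n n}{\Var(f)}=C\frac{t_n}{\hat t_n}\le C\beta .
\end{align*}

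\emph{Step 2 (chaotic regime).} The lower-bound half of Theorem \ref{Theorem: variance and chaos} is proved pointwise in $t$: the total co-influence at time $t$ dominates $\frac1C\E|\tlpi\cap\tlpi^t|-C(\log n)^{d+8}$. I would restate and use it in that form. Since both the co-influence and, by the same comparison, the effective overlap term are monotone in $t$, integrating over $[0,t_n]$ gives
\begin{align*}
t_n\Big(\tfrac1C\E|\tlpi\cap\tlpi^{t_n}|-C(\log n)^{d+8}\Big)\le\int_0^{t_n}\!-\tfrac{d}{ds}\Cov(f,f^s)\,ds\le \Var(f)=n\hat t_n .
\end{align*}
Dividing by $t_n$ and using $\hat t_n/t_n\le 1/\beta$ yields the bound in (ii). The restriction $\beta<1/\hat t_n$ only ensures $t_n\le 1$.

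\emph{Step 3 (consequence).} By \eqref{subb}, $\hat t_n\le C/\log n$, so $\beta_n:=t_n/\hat t_n\ge c\,t_n\log n\to\infty$. If $\beta_n\ge 1/\hat t_n$, i.e.\ $t_n\ge 1$, this contradicts $t_n\to0$ for large $n$, so (ii) applies. It gives $\E|\tlpi\cap\tlpi^{t_n}|\le C(n/\beta_n+(\log n)^{d+8})=o(n)$.

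\emph{Main obstacle.} The delicate point is the monotonicity used in Step 2. The co-influence is monotone by the spectral formula, but the overlap $\E|\tlpi\cap\tlpi^t|$ need not be. I would therefore avoid needing monotonicity of the overlap itself. I would apply the pointwise lower bound only at $s=t_n$, and use monotonicity of the co-influence to get
\begin{align*}
t_n\cdot\mathrm{coinf}(t_n)\le\int_0^{t_n}\mathrm{coinf}(s)\,ds .
\end{align*}
This requires the comparison ``co-influence at $t$ $\gtrsim$ overlap at $t$'' to hold for each fixed $t$ uniformly. That is the step I would verify most carefully in the effective-radius argument, where infinite weights prevent a direct second-moment bound.
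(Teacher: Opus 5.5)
Your proof is correct and follows the paper's route. You use the co-influence representation of $\Cov(\rmtlD(0,nx),\rmtlD^{t}(0,nx))$, derived spectrally rather than via Lemma \ref{Lemma: covariacne formula}. You use the pointwise-in-$t$ bounds \eqref{lower bound} and \eqref{upper bound of TM}, transferred to $\rmtlD$ through \eqref{Eq: comparison of influence} and \eqref{Eq: gedesics size difference}, and then integrate over $[0,t_n]$.

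The only variation is in (ii), where you use monotonicity of the co-influence and the paper uses monotonicity of $\E[|\tlpi(0,nx)\cap\tlpi^{s}(0,nx)|]$; either works. Your worry that the overlap might not be monotone is unfounded. With $h=\I(e\in\tlpi(0,nx))$, the overlap term is $\E[\I(e\in\tlpi(0,nx))\I(e\in\tlpi^{s}(0,nx))]=\sum_S(1-s)^{|S|}\hat h(S)^2$, which is nonincreasing by the same positivity you used for the covariance. This is how the paper obtains it from Lemma \ref{Lemma: derivative of phi s}.
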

The above phase transition exhibits a sharp window around the critical sequence $\hat{t}_n$. In the chaotic regime, if $t_n \gg \hat{t}_n$, we have
    $$\E[|\tlpi(0,nx) \cap \tlpi^{t_n}(0,nx)|] = o(n).
$$
 In the stable regime, whereas if $t_n \ll \hat{t}_n$ then
    $$\corr(\rmtlD(0,nx),\rmtlD^{t_n}(0,nx)) \geq 1 -o(1).$$
In this regime, we say that the chemical distance $\rmtlD(0,nx)$ is \textit{noise stable}. By analogy with first passage percolation, we expect that in the chaotic regime the chemical distance $\rmtlD(0,nx)$ is \textit{noise sensitive} (an opposite notion),
i.e for $t_n \gg \hat{t}_n$,
\begin{align}\label{Eq: noise sensitivity}
    \corr(\rmtlD(0,nx),\rmtlD^{t_n}(0,nx)) = o(1).
\end{align}
Noise sensitivity was introduced in the context of Boolean functions in the pioneering work of Benjamini, Kalai, and Schramm \cite{benjamini1999noise}, which captures the phenomenon whereby the output becomes asymptotically independent under two highly correlated inputs. From the perspective of spectral theory, noise sensitivity and superconcentration correspond to the concentration of Fourier mass on the high end of the spectrum. This connection therefore provides evidence for an equivalence between superconcentration and chaos.  Whenever \eqref{Eq: noise sensitivity} holds, a stronger transition is obtained as in \cite{ganguly2024stability,bordenave2020noise}, in the sense that both the stable and
chaotic regimes involve the same object (chemical distance). However, such noise sensitivity is not currently known for growth models such as first passage percolation or supercritical Bernoulli percolation, where the ground states take real values rather than Boolean ones.
\begin{remark}
In the proof of Theorem \ref{Theorem: variance and chaos}, the assumption \eqref{Useful assump} is used only to obtain a uniform lower bound on the dynamical overlap (see \eqref{Eq: lower bound of intersect pi0,s}).
This lower bound allows us to eliminate certain decay error terms in the upper bound of the variance. If we can establish a sufficiently sharp lower bound on the dynamical overlap, then the relations \eqref{Thm: lower} and \eqref{Thm: upper} remain valid without the poly-logarithmic error terms.
\end{remark}
\subsection{Sketch of the proofs} Fix $M := M(n)= \lfloor (\log n)^2 \rfloor$. We consider the first truncated passage percolation model in which  the family of the edge-weights $\tau:=(\tau_e)_{e \in \cE}$ is an $M$-truncated version of the original weight $\omega$. In other words, $(\tau_e)_{e \in \cE}$ are i.i.d. random variables distributed as
\begin{align*}
    F_M:= p\delta_1+ (1-p) \delta_M,
\end{align*}
where $\delta_a$ is the Dirac measure with the mass at $a$. For any set $A \subset \Z^d$, we denote the first truncated passage time by
\begin{align}\label{}
    \rmT_{A,M}(x,y) := \inf \left\{ \sum_{e \in \gamma} \tau_e: \gamma \text{ is a self-avoiding path from } x \text{ to } y \text{ inside } A \right\},
\end{align}
with the convention that when $A = \Z^d$, we simply write $\rmT_M$.
We also define 
\begin{align*}
    \rmtlT_M(x,y):= \rmT_{M}([x],[y]).
\end{align*}
For $t \in [0,1]$, we define a collection of $t$-noise environments $\tau(t)=(\tau_e(t))_{e\in \cE}$ as the $M$-truncated version of the edge-weights $\omega(t)$, i.e. for each $e \in \cE$,
\begin{align*}
    \tau_e(t)= \I(U_e \geq t) \tau_e + \I(U_e < t) \tau'_e,
\end{align*}
where $\tau':=(\tau'_e)_{e\in \cE}$  is an $M$-truncated version of $\omega'$. We also denote $ \rmT^t_{M}$ the first truncated passage time in $t$-noise environment $\tau(t)$ and 
\begin{align*}
    \rmtlT^t_M(x,y):= \rmT_{M}^t([x]_{t},[y]_{t}),
\end{align*}
where we recall that for all $z \in \Z^d$, $[z]_{t}$ denotes the closest point to $z$ in $\kC^{t}_\infty$. Let $\tlpi^t_M(0,nx)$ be the geodesic of $\rmtlT^t_M(0,nx)$, with the convention that $\tlpi_M(0,nx):= \tlpi^0_M(0,nx)$. If the geodesic is not unique, we define $\tlpi^t_M(0,nx)$ to be the intersection of all such geodesics. We also let $\G^t_M([x]_t,[y]_t)$ denote the set of all geodesics of $ \rmtlT^t_M(x,y)$. 
 
Our idea of the proof is as follows. First, we approximate the chemical distance $\rmtlD$ by the first truncated passage time $\rmtlT_M$ uniformly in $t$. Next, we derive dynamical variance inequalities for $\rmtlT_M$ via a representation of the variance in terms of the dynamical \textit{co-influences} of respective edges. This representation allows us to reduce the problem to estimating the co-influence of a single edge with respect to $\rmtlT_M$ at time $t$. The main idea for obtaining upper bounds on both the co-influence and the discrepancy between chemical distance and first truncated passage time is to control the effect of resampling an edge-weight using the dynamical version of \textit{effective radius}. Finally, we manage the total resampling cost over all edges in $\cE$ by applying the theory of lattice animals in dependent environments. We now describe the outline in more detail.
\\
\noindent \underline{\textbf{Step 1}}: \textbf{Approximating chemical distance by the first truncated passage time}.
We will control the difference in the overlap between geodesics as follows:
    \begin{align}\label{Eq: gedesics size difference}
      \forall t\geq 0, \quad  \E[| |\tlpi(0,nx) \cap \tlpit(0,nx)|- |\tlpim(0,nx) \cap \tlpimt(0,nx)||] = \cO(\exp(-M^{3/4})). 
    \end{align} 
Next, we aim to show that
    \begin{align}\label{Eq: variance difference}
     \forall t\geq 0, \quad   |\Var(\rmtlD^t(0,nx)) - \var(\rmtlTmt(0,nx))| = \cO(\exp(-M^{3/4})).
    \end{align}
Let $\omega^1:=(\omega^1_e)_{e \in \cE}$ and $\omega^2:=(\omega^2_e)_{e \in \cE}$ be two independent copies of $\omega,\omega'$. Let $\tau^1:=(\tau^1_e)_{e \in \cE}$ and $\tau^2:=(\tau^2_e)_{e \in \cE}$ be the $M$-truncated versions of $\omega^1$ and $\omega^2$, respectively. Then, $\tau^1$ and $\tau^2$ are two independent copies of $\tau,\tau'$. Let us define the discrete derivatives of an edge $e$ with respect to $\rmtlD(0,nx)$ and $\rmtlT_M(0,nx)$ as 
\begin{align*}
    & \nabla^{\omega_e,\omega^1_e}_e \rmtlD(0,nx):= \rmtlD(0,nx) \circ \sigma_e^{\omega_e}(\omega)- \rmtlD(0,nx) \circ \sigma_e^{\omega^1_e}(\omega),\\
    & \nabla^{\omega_e,\omega^2_e}_e \rmtlD^t(0,nx):= \rmtlD^t(0,nx) \circ \sigma_e^{\omega_e}(\omega(t))- \rmtlD^t(0,nx) \circ \sigma_e^{\omega^2_e}(\omega(t)); \\
    & \nabla^{\tau_e,\tau^1_e}_e \rmtlT_M(0,nx):= \rmtlT_M(0,nx) \circ \sigma_e^{\tau_e}(\tau)- \rmtlT_M(0,nx) \circ \sigma_e^{\tau^1_e}(\tau),\\
    & \nabla^{\tau_e,\tau^2_e}_e \rmtlT^t_M(0,nx):= \rmtlT^t_M(0,nx) \circ \sigma_e^{\tau_e}(\tau(t))- \rmtlT^t_M(0,nx) \circ \sigma_e^{\tau^2_e}(\tau(t)),
\end{align*}
where for any vector $X:=(X_e)_{e \in \cE}$, the operator $\sigma_e^a(X)$ denotes the configuration obtained from $X$ by replacing the coordinate $X_e$ with $a$.
We then define the corresponding co-derivatives by
\begin{align*}
    \Delta_e(\rmtlD,\rmtlD^t):= \nabla^{\omega_e,\omega^1_e}_e \rmtlD(0,nx) \nabla^{
     \omega_e,\omega^2_e}_e \rmtlD^t(0,nx),\quad  \Delta_e(\rmtlTm,\rmtlT^t_M):=\nabla^{\tau_e,\tau^1_e}_e \rmtlT_M(0,nx) \nabla^{\tau_e,\tau^2_e}_e \rmtlT^t_M(0,nx).
\end{align*}
Taking expectations of these quantities yields the co-influences of the edge $e$ with respect to the chemical distance and the first truncated passage time at time $t$, respectively. For all $ t \geq 0$, we aim to estimate the difference between the corresponding total co-influences of edges:
        \begin{align}\label{Eq: comparison of influence}
     \left|\sum_{e \in \cE}\E\left[\Delta_e(\rmtlD,\rmtlD^t)\right] -\sum_{e \in \cE}\E\left[ \Delta_e(\rmtlTm,\rmtlT^t_M) \right]\right|  = \cO(\exp(-M^{3/4})).
\end{align}
Generally, these discrepancies can be established by proving that  $\rmtlD^t(0,nx)$ and $\rmtlT^t_M(0,nx)$ have the same geodesics with overwhelming probability. To justify this, we introduce the notion of effective dynamic radius $(\re)_{e \in \cE}$
in Subsection \ref{Subsection: effective radius}. Roughly speaking, for an edge $e$ lying on a geodesic of $\rmtlT^t_M(0,nx)$, the quantity $\re$ controls the length of
an open bypass around $e$, and hence quantifies the impact of resampling $\tau_e(t)$ on $\rmtlT^t_M(0,nx)$. Moreover, these effective dynamic radii possess two useful properties (see Proposition \ref{Propo: good properties of re}):
 \begin{itemize}
     \item [(a)] First, they are relatively local dependent, i.e. there exists a positive constant $C_*= C_*(p,d)$ such that the event $\{\re \leq \ell \}$ depends only on the weight of edges inside box $\Lambda_{C_*\ell}(e)$.  
     \item[(b)]  Second, they satisfy an exponentially small
tail probability : $\pp(\re \geq \ell ) \leq \exp(-c\ell)$ for all $1 \leq \ell \leq \exp(c M)$. 
 \end{itemize}
On the event \{$\forall \gamma \in \G^t_M([0]_t,[nx]_t), \forall e \in \gamma: \re \leq M^{3/4}$\}, which holds with overwhelming probability, we deduce that any geodesic $\gamma$ of $\rmtlT^t_M(0,nx)$ is open (i.e. consists only of $1$-weight edges) and is therefore also a geodesic of $\rmtlD^t(0,nx)$. 
\\
 \noindent \underline{\textbf{Step 2}}: \textbf{Dynamical variance inequalities for first truncated passage time}. Our aim is to prove the following:
\begin{theorem}\label{Theorem: a dynamic formula for Dkm} There exists some constant $C > 0$ such that for all $n >1$ and $x \in \Z^d$,
\begin{align}
 \frac{1}{C} \int_0^1 \E[|\tlpim(0,nx) \cap \tlpimt(0,nx)|] dt -C (\log n)^{d+8} \leq  \var(\rmtlTm(0,nx)),
\end{align}
and for all $\alpha > 0$,
\begin{align}
   \var(\rmtlTm(0,nx)) \leq C \alpha^2 \int_0^1 \E[|\tlpim(0,nx) \cap \tlpimt(0,nx)|] dt + C\exp(-\alpha/C) n + C (\log n)^{2d+6}, 
\end{align}
where the function $\E[|\tlpim(0,nx) \cap \tlpimt(0,nx)|]$ is non-increasing in $t$.
\end{theorem}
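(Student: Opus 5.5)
The strategy is the dynamical (interpolation) variance formula for functions of independent variables, specialised to the $M$-truncated weights. Write $f=\rmtlTm(0,nx)$ and $f^t=\rmtlT^t_M(0,nx)$. The noise operator here is the resampling operator $P_t g(\tau)=\E[g(\tau(t))\mid\tau]$, i.e.\ the Markov semigroup with $P_t=e^{-sL}$ where $1-t=e^{-s}$. It yields the covariance representation
\begin{align*}
\var(f)=\int_0^1 \sum_{e\in\cE} c\,\E\bigl[\nabla_e^{\tau_e,\tau^1_e} f\;\nabla_e^{\tau_e,\tau^2_e} f^t\bigr]\,dt=\int_0^1\sum_{e}c\,\E[\Delta_e(\rmtlTm,\rmtlT^t_M)]\,dt
\end{align*}
for an explicit constant $c$. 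This is the standard formula $\var(f)=\int_0^\infty \E[\langle \nabla f,\nabla P_s f\rangle]\,ds$, rewritten in the $t$ variable; here $\nabla_e$ is the discrete resampling derivative. Each summand equals $\E[(\E_e\nabla_e f)^2]$-type quantities evaluated at time $t$, so it is nonnegative and nonincreasing in $t$ (spectral expansion: $\E[\Delta_e]=\sum_S \mathbb{1}_{e\in S}(1-t)^{|S|-1}\hat f(S)^2$ up to constants). The same Fourier/Efron--Stein expansion will show that $t\mapsto \E[|\tlpim\cap\tlpimt|]$ is nonincreasing: $\E[\mathbb{1}_{e\in\tlpim}\mathbb{1}_{e\in\tlpimt}]=\E[(P_{t/2}\mathbb{1}_{e\in\tlpim})^2]$-type identity. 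Since resampling is a reversible semigroup, $\E[g(\tau)g(\tau(t))]=\|P_{t'}g\|_2^2$ with $(1-t')^2=1-t$, which is nonincreasing.

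The second step is a comparison of co-influences with the geodesic overlap, edge by edge. For the \emph{lower bound}, on the event that $\tau_e=1$, $\tau^1_e=M$ and $e\in\tlpim$, the derivative $\nabla_e f$ is at least $1$ in absolute value, and it has a fixed sign. Similarly $\nabla_e f^t$ has the same sign when $e\in\tlpimt$. The configurations are coupled through the shared value $\tau_e$, and both derivatives are monotone in the weight of $e$, so $\Delta_e\ge 0$ always and $\Delta_e\ge \mathbb{1}\{\tau^1_e=\tau^2_e=M,\ \tau_e=1\}\mathbb{1}_{e\in\tlpim}\mathbb{1}_{e\in\tlpimt}$. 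The weights $\tau^1_e,\tau^2_e$ are independent of the rest, giving the factor $(1-p)^2$. The subtlety is the regularization: changing $\tau_e$ can move $[0]$ or $[nx]$ or $\kC_\infty$. I will handle this by restricting to $e$ outside the $(\log n)^{2}$-neighbourhoods of $0,nx$, using the exponential tails of $\|[0]-0\|$. The boundary edges contribute at most $C(\log n)^{d+8}$ through a crude polynomial bound $|\nabla_e f|\le CM\cdot$(local radius), which accounts for the error term. Summing over $e$ and integrating then gives the lower bound.

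For the \emph{upper bound}, when $\tau_e=M$ we have $|\nabla_e f|\neq0$ only if $e$ lies on a geodesic after lowering. I will use the effective dynamic radius $\re$ of Proposition~\ref{Propo: good properties of re}: $|\nabla_e f^t|\le C\,\re$ and $\nabla_e f^t\ne0\Rightarrow e\in\tlpimt$ (or in the modified one). Then
\begin{align*}
|\Delta_e|\le C\,R_e^{0,0}R_e^{0,t}\,\mathbb{1}_{e\in\tlpim}\mathbb{1}_{e\in\tlpimt}+(\text{symmetrized versions}).
\end{align*}
Split according to $\max(R_e^{0,0},\re)\le\alpha$ or not. The first part gives $\alpha^2\E|\tlpim\cap\tlpimt|$. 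The second part is bounded by $\E\sum_{e\in\tlpim}R^2\mathbb{1}_{R>\alpha}$. I will control it by lattice animal theory in the dependent environment given by the locality property (a) and tail property (b): since $|\tlpim|\le Cn$ with overwhelming probability, the greedy lattice animal bound gives $\E\sup_{\gamma\ni 0,|\gamma|\le Cn}\sum_{e\in\gamma}R_e^2\mathbb{1}_{R_e>\alpha}\le Cne^{-\alpha/C}$. Tails beyond $\exp(cM)$ and the near-endpoint edges produce the $(\log n)^{2d+6}$ term.

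The main obstacle I expect is this last step. It requires making the lattice-animal estimate work with only local dependence of $\{\re\le\ell\}$ on the box $\Lambda_{C_*\ell}(e)$, and with the tail valid only up to $\exp(cM)$. I would try first a block renormalization: partition the lattice into scale-$\ell$ boxes, apply a union bound over animals of boxes with the independence of well-separated boxes, and then sum over dyadic $\ell\ge\alpha$.
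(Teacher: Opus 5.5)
Your architecture is the paper's: the covariance formula at $t=0$, pivotality from integer-valuedness for the lower bound, the bypass of Proposition~\ref{Propo: good properties of re}(iii) plus greedy lattice animals for the upper bound, and separate control of edges that move the regularized points. Your monotonicity argument via $\E[g(\tau)g(\tau(t))]=\sum_S(1-t)^{|S|}\hat g(S)^2$ is fine.

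The genuine gap is in the upper bound. Your claims ``$|\nabla_e f^t|\le C\re$'' and ``$\nabla_e f^t\neq 0\Rightarrow e\in\tlpimt(0,nx)$'' hold only when $e$ is open in $\tau(t)$ itself. In $\Delta_e$, however, the edge is set to $\tau_e$ or $\tau^2_e$ regardless of $\tau_e(t)$. So when $\tau_e(t)=M$ (or $\tau_e=M$), the relevant geodesic is that of the flipped environment $\sigma^1_e(\tau(t))$, and the radius giving the bypass is the flipped environment's radius, not $\re$. Your ``symmetrized versions'' are therefore indicators of $e$ lying on geodesics of modified environments. Nothing in your sketch compares their expectation with $\E[|\tlpim(0,nx)\cap\tlpimt(0,nx)|]$, so the bound does not come out in terms of the overlap in the statement.

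The missing step is the paper's independence reduction:
\begin{itemize}
\item By Lemma~\ref{Lem: Efx1x2=El1}, $\E[\Delta_e(\rmtlTm,\rmtlT^t_M)]=p(1-p)\E[\influ_e(\rmtlTm,\rmtlT^t_M)]$.
\item $\influ_e$ is a function of $(\tau_{e'},\tau_{e'}(t))_{e'\neq e}$ only, hence independent of $(\tau_e,\tau_e(t))$, and $\pr(\tau_e=\tau_e(t)=1)\ge p^2$.
\item Since $\E[\influ_e]\ge 0$, this gives $\E[\influ_e]\le p^{-2}\E[\influ_e\I(\tau_e=\tau_e(t)=1)]$.
\item On $\{\tau_e=\tau_e(t)=1\}\cap\cQ^0_e\cap\cQ^t_e$, a nonzero derivative forces $e$ into the \emph{actual} intersection of geodesics at both times. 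Proposition~\ref{Propo: good properties of re}(iii) then applies to an actual geodesic with the actual $\re$, which is Proposition~\ref{Prop: bound influence by re}(ii).
\end{itemize}

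Several smaller points also need repair.
\begin{itemize}
\item $\Delta_e\ge 0$ is false pointwise once regularization enters, since closing $e$ can move $[0]$ or $[nx]$ and lower $\rmtlTm$. Only $\E[\Delta_e]\ge 0$ holds, but that is all you need.
\item Your lower bound keeps only the case $\tau_e=1$, $\tau^1_e=\tau^2_e=M$. It therefore controls $\E[\I(\tau_e=1)\I(e\in\tlpim\cap\tlpimt)]$ rather than the overlap. Add the case $\tau_e=M$, $\tau^1_e=\tau^2_e=1$, where the same pivotality gives $\Delta_e\ge 1$, or argue with $\influ_e$, which does not see $\tau_e$.
\item Because each $\E[\Delta_e]\ge0$, in the lower bound you can simply discard edges within $C\log n$ of $0$ or $nx$. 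Your $(\log n)^2$-neighbourhoods cost about $(\log n)^{2d}$ edges, which exceeds $(\log n)^{d+8}$ when $d>8$.
\item For the remaining edges, the bound $\pr(\{\cQ^0_e\cap\cQ^t_e\}^c)\le C\exp(-c\min(\|e\|_\infty,\|e-nx\|_\infty))$ needs Lemma~\ref{Lemma: 0 not to infty} on large finite clusters, not only the tail of $\|[0]-0\|_\infty$.
\item The lattice-animal step is not the obstacle you expect. Lemma~\ref{lemmaxbound} applies directly to $I_{e,N}=\I(N-1\le \hre<N)$, which is $C_*N$-dependent by Proposition~\ref{Propo: good properties of re}(i). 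Because $|\nabla^{M,1}_e\rmtlT^s_M|\le M$ on $\cQ^s_e$, you may truncate to $\hre=C_*\re\wedge M$, so only $N\le M\ll e^{cM}$ is ever used. The path must start at the origin, so take $\tlpim(0,nx)$ concatenated with a shortest path from $0$ to $[0]$.
\end{itemize}
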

 %%%%%%%%%%%%%%%%%%%%%%%%%%%%%%%%%%%%%%%%%%%%
 Our proof is inspired by the covariance representation for first passage percolation with edge-weight distributions having finite second moment, developed in \cite{ahlberg2023chaos}. A related formulation in the Gaussian setting is central  to Chatterjee's work in establishing the first rigorous connection between superconcentration and chaos  \cite{chatterjee2014superconcentration}. Analogous covariance formulas for Boolean functions also play an important role in the study of noise sensitivity of critical percolation \cite{tassion2023noise}. In the present work, we adapt this representation to analyze co-influences in the setting where the edge‐weight distribution has unbounded support, allowing infinite values. More precisely, we relate the dynamical covariance to the total co-influence at time
$t$ as follows. For every $t \in [0,1]$, we have
    \begin{align}\label{Eq: Representation of cov of D}
      \Cov(\rmtlD(0,nx),\rmtlD^t(0,nx))= \int_t^1 \sum_{e \in \cE}  \E\left[\Delta_e(\rmtlD,\rmtlD^s)\right]ds.
    \end{align}
As a consequence, we get
    \begin{align}\label{Eq: Representation of squre of D-Dt}
        \E[(\rmtlD(0,nx)-\rmtlD^t(0,nx))^2]= 2\int_0^t \sum_{e \in \cE} \E\left[\Delta_e(\rmtlD,\rmtlD^s)\right] ds.
\end{align}
Similarly, 
      \begin{align} \label{Eq: formula of cov of Tm}
\Cov(\rmtlTm(0,nx),\rmtlTmt(0,nx))= \int_t^1\sum_{e \in \cE}  \E\left[\Delta_e(\rmtlTm,\rmtlT^s_M)\right]ds,
    \end{align}
where by a simple computation, we can rewrite
\begin{align} \label{Eq: co-influe and co-impact}
\E\left[\Delta_e(\rmtlTm,\rmtlT^t_M)\right] = p(1-p)\E\left[ \influ_e(\rmtlTm,\rmtlT^t_M) \right], \quad  \influ_e(\rmtlTm,\rmtlT^t_M) := \nabla^{M,1}_e \rmtlT_M(0,nx) \nabla^{M,1}_e \rmtlTmt(0,nx).
\end{align} 
Theorem \ref{Theorem: a dynamic formula for Dkm} can be deduced from \eqref{Eq: formula of cov of Tm} and \eqref{Eq: co-influe and co-impact} together with  the following lower and upper bounds on the total co-influence: there exists a constant $C > 0$ such that for all $t \geq 0$,
   \begin{align}\label{lower bound}
       \sum_{e \in \cE}  \E\left[ \influ_e(\rmtlTm,\rmtlT^t_M) \right]\geq \frac{1}{C}\E[|\tlpim(0,nx) \cap \tlpim^{t}(0,nx)|] - C (\log n)^{d+8}, 
   \end{align}
 and 
\begin{align}\label{upper bound of TM}
\forall \alpha >0, \, \sum_{e \in \cE}\E\left[ \influ_e(\rmtlTm,\rmtlT^t_M)\right]
    \leq C \alpha^2 \E\left[|\tlpim(0,nx) \cap \tlpim^{t}(0,nx)|\right] +  C\exp(-\alpha/C) n + C(\log n)^{2d+6}.
\end{align}
The lower bound \eqref{lower bound} is more straightforward, following from the key observation that $\rmtlT^t_M(0,nx)$ only takes integer values. For the upper bound \eqref{upper bound of TM}, on the event that the regularized points $[0],[nx]$ and $[0]_t, [nx]_t$ are unchanged when closing  the edge $e$, and that $e$ is not close to any of these points, thanks to the tool of effective dynamic radius, we deduce that
\begin{align*}
        0 \leq \influ_e(\rmtlTm,\rmtlT^t_M) \I(\omega_e =\omega_e(t)=1) \leq  (\hre)^2\I(e \in \tlpi_M(0,nx) \cap  \tlpi^t_M(0,nx)),
    \end{align*}
where
\begin{align*}
 \hre := C_*\re \wedge M.
\end{align*}
Thus, we can upper bound the total co-influence at time $t$ as
\begin{align}\label{upper bound}
\sum_{e \in \cE} \E\left[\influ_e(\rmtlTm,\rmtlT^t_M) \right] \leq C \E\left[\sum_{e \in \tlpi_M(0,nx) \cap  \tlpi^t_M(0,nx)} (\hre)^2\right] + C (\log n)^{2d+6}.
\end{align}
Our current goal is to estimate the total cost of edge resampling or the sum
of the effective dynamic radii along the overlap of geodesics (RHS of \eqref{upper bound}). Thanks to the properties (a) and (b) of effective dynamic radii, applying \textbf{lattice animal theory}, we obtain that
\begin{align*}
  \forall \alpha > 0, \quad   \E\left[\sum_{e \in \tlpi_M(0,nx) \cap  \tlpi^t_M(0,nx)} (\hre)^2 \right] \leq C \alpha^2 \E[|\tlpi_M(0,nx) \cap  \tlpi^t_M(0,nx)|] + C\exp(-\alpha/C) n.
\end{align*}
\subsection{Proof of Theorem \ref{Theorem: variance and chaos} and \ref{theorem: main} via Theorem \ref{Theorem: a dynamic formula for Dkm}} \label{Section: proof of main results}
\begin{proof}[Proof of Theorem \ref{Theorem: variance and chaos}]
    For (i), using Theorem \ref{Theorem: a dynamic formula for Dkm} and \eqref{Eq: variance difference}, \eqref{Eq: gedesics size difference}, we get
    \begin{align*}
        \var(\rmtlD(0,nx)) & \geq   \var(\rmtlT_M(0,nx)) - \cO(\exp(-M^{3/4})) \\
        & \geq c \int_0^1 \E[|\tlpi_M(0,nx) \cap \tlpi_M^{t}(0,nx)|] dt- \cO((\log n)^{d+8}) \\
        & \geq  c \int_0^1 \E[|\tlpi(0,nx) \cap \tlpi^{t}(0,nx)|] dt - \cO((\log n)^{d+8}),
    \end{align*}
for some constant $ c >0$. For (ii), it follows from Theorem \ref{Theorem: a dynamic formula for Dkm} and \eqref{Eq: variance difference}, \eqref{Eq: gedesics size difference} that there exists a constant $C>0$ such that for all $\alpha > 0$,
\begin{align*}
        \var(\rmtlD(0,nx)) & \leq   \var(\rmtlT_M(0,nx)) + \cO(\exp(-M^{3/4})) \\
        & \leq  C \alpha^2 \int_0^1 \E[|\tlpi_M(0,nx) \cap \tlpi_M^{t}(0,nx)|] dt + C\exp(-\alpha/C) n + C (\log n)^{2d+6} \\
        & \leq  C \alpha^2\int_0^1 \E[|\tlpi(0,nx) \cap \tlpi^{t}(0,nx)|] dt+ C\exp(-\alpha/C) n + C (\log n)^{2d+6}+ \alpha^2 \cO(\exp(-M^{3/4})).
    \end{align*}
Notice that $ \E[|\tlpi(0,nx) \cap \tlpi^{t}(0,nx)|] = \sum_{e \in \cE} \E[\I(e \in \tlpi(0,nx)) \I( e \in \tlpi^{t}(0,nx))]$. Applying Lemma \ref{Lemma: derivative of phi s} to $f = \I(e \in \tlpi(0,nx))$, we have $\E[|\tlpi(0,nx) \cap \tlpi^{t}(0,nx)|]$ is non-increasing in $t$. By Proposition \ref{Thm: intersection of geo}, there exist constants $c', C'>0$ such that for all $t \geq 0$,
\begin{align}\label{Eq: lower bound of intersect pi0,s}
    \E[|\tlpi(0,nx) \cap \tlpi^{t}(0,nx)|] & \geq \E[|\tlpi(0,nx) \cap \tlpi^{1}(0,nx)|]  \geq \sum_{e \in \Lambda_{C' n}} \pr(e \in \tlpi(0,nx), e \in \tlpi^1(0,nx)) \notag \\
    & = \sum_{e \in \Lambda_{C' n}} (\pr(e \in \tlpi(0,nx)))^2  \geq \frac{1}{| \Lambda_{C' n}|}\left(\sum_{e \in \Lambda_{C' n}} \pr(e \in \tlpi(0,nx))\right)^2 \notag \\
    & \geq \frac{1}{| \Lambda_{C' n}|} \left(\E\left[ \sum_{e \in \cE} \I(e \in \tlpi(0,nx)) \I(\tlpi(0,nx) \subset \Lambda_{C' n}) \right] \right)^2 \notag \\
    & \geq \frac{1}{| \Lambda_{C' n}|} (\E[|\tlpi(0,nx)|\I(\tlpi(0,nx) \subset \Lambda_{C' n})] )^2  \notag \\
    &  = \frac{1}{| \Lambda_{C' n}|} (\E[|\tlpi(0,nx)|]-\E[|\tlpi(0,nx)| \I(\tlpi(0,nx)\not \subset \Lambda_{C' n})])^2\notag  \\
    & \geq c'n^{2-d},
\end{align}
where for last inequality we have used that
\begin{align*}
    \E[|\tlpi(0,nx)| \I(\tlpi(0,nx)\not \subset \Lambda_{C' n})] \leq (\E[|\tlpi(0,nx)|^2] \pr(\tlpi(0,nx)\not \subset \Lambda_{C' n}))^{1/2} = o(1),
\end{align*}
as Lemmas \ref{Lem: hole}, \ref{Lem: large deviation of graph distance Dlambda}. Hence, 
\begin{align*}
        \var(\rmtlD(0,nx))  \leq  2C\alpha^2\int_0^1 \E[|\tlpi(0,nx) \cap \tlpi^{t}(0,nx)|] dt+ C\exp(-\alpha/C) n+ C (\log n)^{2d+6},
    \end{align*}
as desired.
\end{proof}
\begin{proof}[Proof of Theorem \ref{theorem: main}]  For stable regime, it follows from \eqref{Eq: comparison of influence}, \eqref{Eq: Representation of squre of D-Dt}, \eqref{Eq: co-influe and co-impact}, \eqref{upper bound of TM} that for $n$ large enough
\begin{align}
      \E\Big[(\rmtlD(0,nx) & -\rmtlD^t(0,nx))^2\Big]  \stackrel{\eqref{Eq: Representation of squre of D-Dt}}{=} 2\int_0^t \sum_{e \in \cE} \E\left[\Delta_e(\rmtlD,\rmtlD^s) \right] ds \notag \\
      & \stackrel{\eqref{Eq: comparison of influence}}{\leq} 2\int_0^t \sum_{e \in \cE}  \E\left[ \Delta_e(\rmtlT_M\rmtlT^s_M)\right] ds+t \cO(\exp(-M^{3/4})) \notag \\
      &  \stackrel{\eqref{Eq: co-influe and co-impact}, \eqref{upper bound of TM}}{\leq} 2 C\alpha^2\int_0^t \E[|\tlpi_M(0,nx) \cap \tlpi^{s}_M(0,nx)|]ds +2 C\exp(-\alpha/C) t n + 2 C t (\log n)^{2d+6}+ t \cO(\exp(-M^{3/4})) \notag \\
      & \qquad  \leq  C't n,
\end{align}
for some constant $C'>0$. Here for the last line we have used that $\E[|\tlpi_M(0,nx)|] \leq \E[\rmtlT_M(0,nx)] = \cO(n)$. Therefore,
\begin{align}
  \corr(\rmtlD(0,nx),\rmtlD^t(0,nx))  = \frac{\Cov(\rmtlD(0,nx),\rmtlD^t(0,nx))}{\var(\rmtlD(0,nx))}   & = 1 - \frac{\E[(\rmtlD(0,nx)-\rmtlD^t(0,nx))^2]}{2\var(\rmtlD(0,nx))} \notag\\
  & \geq 1 - \frac{C'nt}{\var(\rmtlD(0,nx))}  \geq 1 -C' \beta,
\end{align}
holds for any sequence $t:=t_n$ such that $\frac{t_n} {\hat{t}_n} \leq \beta$. We next establish the chaotic regime for "large" $t$. Thanks to \eqref{Eq: gedesics size difference}, \eqref{Eq: comparison of influence}, \eqref{Eq: Representation of cov of D}, \eqref{Eq: co-influe and co-impact}, \eqref{lower bound},
we have 
\begin{align*}
        \var(\rmtlD(0,nx)) \stackrel{\eqref{Eq: Representation of cov of D}}{=} \int_0^1 &  \sum_{e \in \cE} \E\left[\Delta_e(\rmtlD,\rmtlD^t) \right] dt 
         \stackrel{\eqref{Eq: comparison of influence}}{\geq} \int_0^t\sum_{e \in \cE} \E\left[\Delta_e(\rmtlT,\rmtlT^s_M) \right] ds - t\cO(\exp(-M^{3/4})) \\
        & \stackrel{\eqref{Eq: co-influe and co-impact}}{=} p(1-p) \int_0^t\sum_{e \in \cE}  \E\left[ \influ_e(\rmtlTm,\rmtlT^s_M)\right]ds  - t\cO(\exp(-M^{3/4})) \\
        & \stackrel{\eqref{lower bound}}{\geq} c \int_0^t \E[|\tlpim(0,nx) \cap \tlpim^{s}(0,nx)|] ds - t \cO((\log n)^{d+8}) \\
        & \stackrel{\eqref{Eq: gedesics size difference}}{\geq} c \int_0^t \E[|\tlpi(0,nx) \cap \tlpi^{s}(0,nx)|] ds -t \cO((\log n)^{d+8})\\
         & \geq  c t \E[|\tlpi(0,nx) \cap \tlpi^{t}(0,nx)|] - t \cO((\log n)^{d+8}).
\end{align*}
This implies that there exists a constant $c>0$ such that
\begin{align*}
\E[|\tlpi(0,nx) \cap \tlpi^{t}(0,nx)|]
& \leq c^{-1}(\var(\rmtlD(0,nx))/t + (\log n)^{d+8})  \leq c^{-1} (n/\beta + (\log n)^{d+8}), 
\end{align*}
holds for any sequence $t:=t_n$ such that $ \frac{t_n} {\hat{t}_n} \geq \beta$.
\end{proof}
\subsection{Structure of the paper and some notations}
The article is organized as follows.
In Section~\ref{Sec: 2}, we introduce the main ingredients of the proofs.
Sections~\ref{Sec: 3} and~\ref{Sec: 4} are devoted to the proofs of Step~1 and Step~2 using the elements prepared in Section~\ref{Sec: 2}.
In the Appendix, we provide the proofs of the dynamical formula for covariance and the lower bound on the intersection of geodesics. To conclude this section, we introduce some notations that will be used throughout the paper.
 \begin{itemize}
   \item \emph{Integer interval}. Given an integer $ \ell \geq 1$, we denote by $[\ell]:= \{1, 2,\ldots,\ell \}$.
  \item \emph{Metric}. We denote by $\|\cdot \|_{1},\|\cdot \|_{\infty}, \|\cdot\|_{2}$  the $l_1,l_{\infty},l_{2}$ norms, respectively.  
  \item \emph{Box and its boundary}. Let $x \in \Z^d$ and $\ell >0$, we will denote by $\Lambda_\ell(x):= x+ [-\ell, \ell]^d \cap \Z^d$ the box centered  at $x =(x_1,\ldots,x_d) \in \Z^d$ with radius $\ell$. For convenience, we briefly write $\Lambda_\ell = [-\ell, \ell]^d $ for $\Lambda_\ell(0)$. We define the boundary of $\Lambda_\ell(x)$ as $\partial \Lambda_\ell(x):=\Lambda_\ell(x) \setminus \Lambda_{\ell-1}(x)$.
   \item \emph{$\Z^d$-path and set of $\Z^d$-paths}. For any $\ell \geq 1$, we say that a sequence $\gamma = (v_0,\ldots, v_\ell)$ is a $\Z^d$-path if for all $i \in [\ell],  \|v_i - v_{i-1}\|_1 =1$. The length of $\gamma$ is $\ell$, denoted by $|\gamma|$. For $1 \leq i<j \leq \ell$, we denote by $\gamma_{v_{i},v_{j}}$ the sub-path of $\gamma$ from $v_{i}$ to $v_j$. In addition, if $v_i \neq v_j$ for $i \neq j$, then we say that $\gamma$ is self-avoiding. From now on, we will shortly write a path in place of a self-avoiding $\Z^d$-path. For any path $\gamma$, we denote ${\bf s}(\gamma),{\bf e}(\gamma)$ the starting and ending vertices of $\gamma$, respectively. Given $U \subset \Z^d$, let $\mathcal{P}(U)$ be the set of all paths in $U$.
 \item \emph{Open path, open cluster and crossing cluster}. Given a Bernoulli percolation on $\Z^d$ with parameter $p$, let $\mathcal{G}_p = (\Z^d, \{ e \in \cE: e \text{ is open} \})$. We say that a path is open if all of its edges are open. An open cluster is a maximal connected component of $\mathcal{G}_p$. An open cluster $\kC$ crosses a box $\Lambda$, if for all $d$ directions, there is an open path in $\kC \cap \Lambda$ connecting the two opposite faces of $\Lambda$. 
 \item  \emph{Open connection}. Given $A,B,U \subset \Z^d$, we write $ A \xleftrightarrow{ U} B$ if there exists an open path inside $U$ connecting $A$ to $B$, and otherwise. 
 When $U = \Z^d$, we omit the symbol $\Z^d$ for simplicity.
\item \emph{Diameter}. For $A \subset \Z^d$ and $1 \leq i \leq d$, let us define
$$
\text{diam}_i(A)= \max_{x,y \in A}|x_i-y_i|,
$$
 and we thus denote diam$(A)$ the diameter of $A$ by 
\begin{align*}
     \text{diam}(A) = \max_{1 \leq i \leq d} \text{diam}_i(A).
 \end{align*}
\item  \emph{Geodesic}. Given $\{x,y\} \subset U \subset \Z^d$, we denote by $\pi$ a geodesic between $x$ and $y$ of $\rmD_U(x,y)$ if $
\pi$ is an open path inside $U$ such that $|\pi| = \rmD_U(x,y)$. We also denote by $\eta$ a geodesic between $x$ and $y$  of $\rmT_{U,M}(x,y)$ if $\eta$ is a path inside $U$ such that $\rmT(\eta) = \rmT_{U,M}(x,y)$.
  \end{itemize}
\textit{Throughout this paper, the symbols 
$c,C$ may denote different positive constants whose values may change from line to line.}
\section{Ingredients for the proof of main results}\label{Sec: 2}
\subsection{Preliminary on supercritical percolation} In this subsection, we review some concrete results of chemical distance and crossing cluster of supercritical Bernoulli percolation.
\begin{lemma}\cite[Theorem 2]{pisztora1996surface} \label{Lem: hole}
There exists a constant $c >0$ such that for all $z \in \Z^d$ and $\ell > 0$,
 \begin{align}\label{Claim: 2e0}
      \pr (\|z- [z]\|_{\infty} \geq \ell) \leq c^{-1} \exp (-c \ell^{d-1}). 
 \end{align}
 \end{lemma}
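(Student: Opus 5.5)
The event $\{\|z-[z]\|_\infty \geq \ell\}$ says that no vertex of $\Lambda_{\ell-1}(z)$ belongs to $\kC_\infty$. So I would prove the lemma by bounding the probability that a box of radius of order $\ell$ contains no point of the infinite cluster. By translation invariance I take $z=0$. It is also enough to treat $\ell$ larger than some fixed $\ell_0$, since for smaller $\ell$ the bound holds after shrinking $c$.

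The main input is Pisztora's coarse-graining / surface-order large deviation estimate, valid for every $p>p_c(d)$ through the Grimmett--Marstrand slab technology. Let $B=\Lambda_{\ell/2}$. Then, outside an event of probability at most $C\exp(-c\ell^{d-1})$, the following two things hold:
\begin{itemize}
\item[(i)] $B$ contains an open cluster crossing $B$;
\item[(ii)] the crossing cluster is unique, in the sense that every open path of diameter at least $\ell/10$ inside $B$ is connected to it within $B$.
\end{itemize}
The exponent $\ell^{d-1}$ is surface order. Preventing a crossing cluster forces a closed "separating surface", and the Peierls-type count for such surfaces costs $\exp(-c\ell^{d-1})$ after renormalization into good blocks.

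It then remains to show that the crossing cluster of $B$ meets $\kC_\infty$, again up to a surface-order error. I would use a nested sequence of dyadic boxes $B_k=\Lambda_{2^k\ell}$ for $k\ge 0$. On the event that each $B_k$ has a crossing cluster and that the crossing clusters of $B_k$ and $B_{k+1}$ are connected (the uniqueness estimate applied in $B_{k+1}$), the crossing cluster of $B$ is part of an infinite open cluster. By uniqueness of the infinite cluster, that cluster is $\kC_\infty$. A union bound gives total failure probability at most $\sum_k C\exp(-c(2^k\ell)^{d-1}) \le C'\exp(-c\ell^{d-1})$. On the good event, $\Lambda_{\ell/2}$ contains a point of $\kC_\infty$, so $\|[0]\|_\infty\le \ell/2<\ell$, and the claim follows after adjusting constants.

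The main obstacle is obtaining the surface-order exponent $\ell^{d-1}$ for the non-existence and non-uniqueness of a crossing cluster, uniformly over the whole supercritical regime. This is exactly the content of Pisztora's Theorem~2, and I would quote it rather than reprove the renormalization. The remaining steps are routine geometric chaining.
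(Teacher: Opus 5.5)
\textbf{There is a genuine gap.} Your reduction to bounding $\pr(\Lambda_{\ell-1}(z)\cap\kC_\infty=\varnothing)$ is fine, and so is the dyadic chaining as geometry. The problem is the probabilistic input. Item (ii) does not fail with probability only $C\exp(-c\ell^{d-1})$ when $d\ge 3$.

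\textbf{Counterexample to (ii) at surface order.} Suppose $B$ contains a straight open segment with $L=\lceil \ell/10\rceil$ edges, all of whose other incident edges are closed. Then either $B$ has no crossing cluster, or this segment is an open path of diameter at least $\ell/10$ that is not connected to it. Either way your good event fails. This configuration has probability at least $p^{L}(1-p)^{(2d-2)(L+1)+2}\ge e^{-C\ell}$.

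So local uniqueness holds only with a linear exponent. This is how the paper itself records it, in Lemma~\ref{lem: twodisjointclusters}. Attributing surface-order decay of non-uniqueness to Pisztora is the misstep. Consequently your union bound over the chain is really $\sum_k Ce^{-c2^k\ell}$. The argument therefore proves only $\pr(\|z-[z]\|_\infty\ge\ell)\le Ce^{-c\ell}$, which is the lemma only for $d=2$.

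Rearranging the chaining does not help. Any argument that requires the crossing cluster of a box of side $\asymp\ell$ to be unique, or a prescribed crossing cluster of it to reach infinity, fails on an isolated open ``cross''. This is one segment in each coordinate direction with all other incident edges closed. It has probability $e^{-O(\ell)}$, while the hole event need not occur.

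\textbf{The missing idea.} Uniqueness should be used only at a fixed mesoscopic scale $K$. The surface exponent then comes from a Peierls count, not from a single macroscopic good event.

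Tile $\Z^d$ by blocks of side $K$. Call a block good if a slightly enlarged block around it has a crossing cluster to which every open path of diameter at least $K/10$ in it is connected. Then:
\begin{itemize}
\item bad blocks have probability at most $e^{-cK}$, which is small but \emph{fixed};
\item the block field is finitely dependent, so for $K$ large it is comparable to a dense Bernoulli site percolation;
\item crossing clusters of adjacent good blocks are joined, so the infinite cluster of good blocks lies inside $\kC_\infty$.
\end{itemize}
On $\{\Lambda_\ell(z)\cap\kC_\infty=\varnothing\}$, no block meeting $\Lambda_{\ell/2}(z)$ belongs to that infinite good-block cluster. A Peierls argument on the block lattice then bounds this by $\exp(-c'(\ell/K)^{d-1})$. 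It uses that the bad blocks bordering the relevant component form $*$-connected sets of total size at least $c(\ell/K)^{d-1}$.

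This is the surface-order estimate the paper simply quotes from Pisztora; the paper gives no proof of its own. You should either cite the hole estimate itself, or carry out this renormalization. You should not derive it from a macroscopic uniqueness event.
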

 \begin{lemma} \cite[Theorem 8.18]{grimmett1999percolation} \label{Lemma: 0 not to infty}
      There exists a
constant $c > 0$ such that for all $\ell > 0$,
\begin{align*}
    \pr( 0 \longleftrightarrow \partial \Lambda_{\ell}, 0 \not \longleftrightarrow \infty) \leq c^{-1} \exp(-c \ell).
\end{align*}
 \end{lemma}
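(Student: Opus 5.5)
The plan is to prove the bound by a coarse-graining (renormalization) argument in the spirit of Pisztora. It reduces the finite-cluster event to the existence of a long connected chain of "bad" blocks, which a Peierls estimate then controls. The only deep input is the Grimmett--Marstrand theorem: for every $p>p_c(d)$, percolation already occurs in a sufficiently thick slab. This yields that the crossing-cluster events have probability close to one on large boxes.

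\textbf{Step 1: good blocks.} Fix a large integer $N$ and tile $\Z^d$ by boxes $B_N(i):=\Lambda_N(2Ni)$, $i\in\Z^d$. Call $B_N(i)$ \emph{good} if two things hold. First, the enlarged box $\Lambda_{3N}(2Ni)$ contains an open cluster crossing it. Second, every open path in $\Lambda_{3N}(2Ni)$ of diameter at least $N/10$ belongs to that crossing cluster. By Grimmett--Marstrand together with the standard uniqueness-of-crossing-cluster estimates (as in Pisztora and in the references of \cite{grimmett1999percolation}), we have $\pr(B_N(i)\text{ is good})\to 1$ as $N\to\infty$. The good events are $6$-dependent on the coarse lattice. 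Hence, by Liggett--Schonmann--Stacey, for $N$ large the good blocks stochastically dominate a Bernoulli site percolation on $\Z^d$ with density $q(N)\to1$.

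\textbf{Step 2: geometric reduction.} Work on the event $\{0\longleftrightarrow\partial\Lambda_\ell,\ 0\not\longleftrightarrow\infty\}$, and let $\kC(0)$ be the finite open cluster of $0$, whose diameter is at least $\ell$. Consider the blocks visited by $\kC(0)$. I claim that every good block $B$ visited by $\kC(0)$ at a point from which $\kC(0)$ exits $\Lambda_{3N}(B)$ has its crossing cluster contained in $\kC(0)$, by the uniqueness property. Since the crossing clusters of adjacent good blocks intersect, the good blocks met in this way form a connected family whose crossing clusters all lie in $\kC(0)$. Because $\kC(0)$ is finite, this family cannot connect to the infinite cluster of good blocks. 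It must therefore be separated from infinity by a $*$-connected set of bad blocks, on its outer boundary in the coarse lattice.

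So either a linear fraction of the roughly $\ell/N$ blocks along $\kC(0)$ are bad, or the boundary of the good region is a $*$-connected bad set surrounding a coarse set of diameter at least $\ell/(2N)$. In both cases there is a $*$-connected set of bad blocks, containing at least $c\ell/N$ blocks, within distance $O(1)$ of the origin's block. The origin's own block may also be bad, which is fine because that block is included in the set.

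\textbf{Step 3: Peierls count.} The number of $*$-connected coarse sets of size $k$ containing a given block is at most $K^k$ for some $K=K(d)$. By the domination from Step 1, each such set is entirely bad with probability at most $(1-q(N))^k$. Summing over $k\ge c\ell/N$ gives the bound $\sum_{k\ge c\ell/N}(K(1-q(N)))^k$. Choosing $N$ so that $K(1-q(N))<1/2$ makes this at most $2^{1-c\ell/N}$, which is $\le c^{-1}\exp(-c\ell)$ after renaming constants. Small values of $\ell$ are absorbed into the constant.

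\textbf{Main obstacle.} The delicate step is Step 2: showing rigorously that a finite open cluster of diameter $\ell$ forces $\Omega(\ell/N)$ bad blocks in a $*$-connected arrangement, and not merely a few bad blocks near its endpoints. I would handle this exactly as Pisztora does, with the uniqueness part of the good-block definition doing the work. If the cluster ever passes through $\Omega(\ell/N)$ consecutive good blocks, their crossing clusters chain together. Such a chain can only stay finite if it is enclosed by a bad $*$-connected surface, and the enclosed region has diameter at least about $\ell/N$, so the surface is at least that large. In $d=2$ one can alternatively argue directly by duality with a closed dual circuit, but the block argument covers all $d\ge2$ uniformly.
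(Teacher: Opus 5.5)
The paper gives no argument for this lemma; it quotes \cite[Theorem 8.18]{grimmett1999percolation}, which ultimately rests on the Grimmett--Marstrand slab theorem. Your static-renormalisation route is a legitimate alternative. Its merit is that it derives the lemma from exactly the two block estimates the paper already imports (Lemmas \ref{Lem: pro of crossing event} and \ref{lem: twodisjointclusters}) plus a Peierls count, uniformly in $d\ge 2$. The citation buys brevity and avoids the topological bookkeeping.

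Step 1 and the first claim of Step 2 are sound. For $\ell>6N+1$, $\kC(0)$ leaves $\Lambda_{3N}(2Ni)$ from every block it meets, so a good block met by $\kC(0)$ has its crossing cluster inside $\kC(0)$. For nearest-neighbour blocks $i,j$, the crossing of $\Lambda_{3N}(2Ni)$ in direction $j-i$ contains a segment of diameter $4N$ inside $\Lambda_{3N}(2Nj)$. So crossing clusters propagate along nearest-neighbour good components, and every good component meeting the connected set $\mathcal B$ of blocks met by $\kC(0)$ is finite.

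\textbf{The gap} is the rest of Step 2, which is exactly what Step 3 needs. The first branch of your dichotomy does not give a $*$-connected bad set. Bad blocks met by $\kC(0)$ that are separated by small finite good islands (say, alternate blocks along a line) are not $*$-adjacent. Nor need the good blocks met by $\kC(0)$ form one connected family. So the mixed situation, which is the generic one, is covered by neither branch, and your ``main obstacle'' paragraph only treats a long run of consecutive good blocks.

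\textbf{The missing idea is to glue.} For a good component $G$ meeting $\mathcal B$, let $\hat G$ be $G$ together with the finite components of $\Z^d\setminus G$, and let $\partial^{\mathrm{out}}G$ be the sites outside $\hat G$ adjacent to it. This set is $*$-connected (Deuschel--Pisztora, Tim\'ar) and consists of bad blocks. Hulls of distinct good components are nested or disjoint. Let $\mathcal S$ be the union of $\partial^{\mathrm{out}}G$ over the maximal hulls, together with the bad blocks of $\mathcal B$ lying in no hull. Now follow a nearest-neighbour path in $\mathcal B$ and replace each excursion into a maximal hull $\hat G$ by $\partial^{\mathrm{out}}G$. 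The entry and exit sites lie in $\partial^{\mathrm{out}}G$, and the path cannot step directly from one maximal hull into another, so $\mathcal S$ is $*$-connected. It consists of bad blocks, and its bounding box contains that of $\mathcal B$, so $|\mathcal S|\ge c\ell/N$.

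There is also a counting issue: $\mathcal S$ need not come within $O(1)$ of the origin's block. If that block is good, $\mathcal S$ may only surround it, at distance comparable to the size of its good component. Step 3 must therefore count $*$-connected sets that contain or surround the origin's block. The latter meet the half-line $\{ke_1:k\ge0\}$ within distance $|\mathcal S|$, which costs only a factor $k$ in $\sum_{k\ge c\ell/N}k\,(K(1-q(N)))^k$. With these repairs your argument goes through.
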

\begin{lemma} \cite[Theorem 7.68]{grimmett1999percolation}\label{Lem: pro of crossing event}
There exists a constant $c >0$ such that for all $\ell > 0$,
\begin{align*}
 \pr( \Lambda_{\ell} \text{ has an } \text{open crossing cluster}) \geq 1-c^{-1} \exp(-c \ell).
\end{align*}
\end{lemma}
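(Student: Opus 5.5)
Since the statement is quoted from \cite[Theorem 7.68]{grimmett1999percolation}, we simply invoke it. If one wanted to reprove it, the plan would be a coarse-graining (renormalization) argument that reduces it to a Peierls estimate for a highly supercritical dependent site percolation on a rescaled lattice.

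\textbf{The case $d=2$.} This would be handled by planar duality. The box $\Lambda_\ell$ fails to have an open crossing cluster only if one of two things happens. Either some direction has no open crossing, which forces a closed dual crossing of length at least $2\ell$ in the transverse direction. Or two open crossings in different directions are disjoint, which is impossible in the plane because a left--right crossing and a top--bottom crossing must share a vertex. For $p>p_c(2)=1/2$ the dual model is subcritical. By exponential decay in the subcritical phase (Menshikov / Aizenman--Barsky), the probability of a closed dual path of length $\geq \ell$ started from one of the $O(\ell)$ boundary dual vertices is at most $C\ell e^{-c\ell}$. This gives the bound after adjusting $c$.

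\textbf{The case $d\geq 3$.} Here I would fix a large scale $N$ and tile $\Lambda_\ell$ by boxes $B_i=\Lambda_N(2Ni)$ together with their enlargements $B_i'=\Lambda_{3N}(2Ni)$. Call the site $i$ \emph{good} if two conditions hold. First, $B_i'$ contains an open crossing cluster. Second, every open path in $B_i'$ of diameter at least $N/10$ belongs to that cluster (local uniqueness).

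\begin{itemize}
\item \emph{Good sites are likely.} The key input is that $\pr(i \text{ good})\to 1$ as $N\to\infty$ for every $p>p_c(d)$. This follows from the Grimmett--Marstrand slab theorem, which says that percolation occurs in sufficiently thick slabs, combined with the Antal--Pisztora/Pisztora uniqueness estimates.
\item \emph{Good neighbours connect.} If $i$ and $j$ are adjacent good sites, then the crossing cluster of $B_i'$ crosses the overlap region. By local uniqueness it coincides there with the crossing cluster of $B_j'$.
\item \emph{No bad separating set.} Consequently, if the coarse box $\{i: B_i\subset\Lambda_\ell\}$ contains a connected crossing set of good sites, then the union of their crossing clusters is a crossing cluster of $\Lambda_\ell$. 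Failure therefore forces a $*$-connected set of bad sites of size at least $c\ell/N$ that blocks some coarse crossing.
\end{itemize}

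The good-site process is finitely dependent, since sites at $\|\cdot\|_\infty$-distance at least $4$ are independent. By Liggett--Schonmann--Stacey it stochastically dominates a Bernoulli site percolation of density $q(N)\to1$. A Peierls count then finishes the argument: the number of $*$-connected sets of size $k$ through a fixed coarse boundary site is at most $C^k$. Summing $(C(1-q))^{k}$ over $k\geq c\ell/N$ and over the $O((\ell/N)^{d-1})$ possible starting sites gives at most $C' e^{-c'\ell/N}$ once $q$ is close enough to $1$. With $N$ now fixed, this is the required bound $c^{-1}\exp(-c\ell)$. Small values of $\ell$ are absorbed by shrinking $c$.

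\textbf{Main obstacle.} The hard step is showing that the probability of a good block tends to $1$ for \emph{all} $p>p_c(d)$, and not merely for $p$ near $1$. This is exactly the content of Grimmett--Marstrand and its uniqueness refinements. Everything else is routine renormalization.
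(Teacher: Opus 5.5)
Your proposal matches the paper, which likewise gives no proof and simply invokes \cite[Theorem 7.68]{grimmett1999percolation}. Your optional sketch is the standard renormalization route behind that theorem: planar duality for $d=2$, and for $d\geq 3$ a coarse-graining via Grimmett--Marstrand plus local uniqueness followed by a Peierls count. The routine boundary-layer details near $\partial\Lambda_\ell$ are left implicit.
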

\begin{lemma} 
\cite[Corollary 2.2, Lemma 2.3]{garet2009moderate}
 \label{Lem: large deviation of graph distance Dlambda}
 There exist $\rho, c > 0$ such that for all $z \in \Z^d$ and  $\ell \geq \rho \|z\|_{\infty} $,
\begin{align}\label{large deviation of D,0-x}
 \max \{ \pr(\infty >\rmD(0,z) \geq  \ell ),\pr(\rmtlD (0,z) \geq  \ell) \} \leq c^{-1} \exp(-c \ell).
\end{align}
Consequently, there exists a constant $C>0$ such that for all $\ell \geq C\|z\|_\infty$,
\begin{align*}
    \max\{\pr(|\gamma| \geq \ell),\pr(|\eta| \geq \ell)\} \leq C\exp(-\ell/C),
\end{align*}
where $\gamma$ is the geodesics of $\rmtlD (0,z) $ and $\eta$ is the union of $\gamma$ and the $\Z^d$-shortest path from $0$ to $[0]$.
\end{lemma}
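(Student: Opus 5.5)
The first claim reduces directly to \eqref{large deviation of D,0-x}, and the second follows from the first plus Lemma~\ref{Lem: hole} via a union bound. I take the first display, $\max\{\pr(\infty>\rmD(0,z)\ge\ell),\pr(\rmtlD(0,z)\ge\ell)\}\le c^{-1}e^{-c\ell}$ for $\ell\ge\rho\|z\|_\infty$, as given, since it is quoted from \cite{garet2009moderate}.

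\emph{The geodesic $\gamma$.} Since $\gamma$ is a geodesic of $\rmtlD(0,z)=\rmD([0],[z])$, we have $|\gamma|=\rmtlD(0,z)$. (If $\gamma$ denotes the intersection of all geodesics, then $|\gamma|$ is still at most this value.) Hence, for $\ell\ge\rho\|z\|_\infty$,
\begin{align*}
\pr(|\gamma|\ge \ell)\le \pr(\rmtlD(0,z)\ge\ell)\le c^{-1}e^{-c\ell}.
\end{align*}

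\emph{The set $\eta$.} A $\Z^d$-shortest path from $0$ to $[0]$ has length $\|[0]\|_1\le d\|[0]\|_\infty$. Therefore $|\eta|\le |\gamma|+d\|[0]\|_\infty$, and a union bound gives
\begin{align*}
\pr(|\eta|\ge\ell)\le \pr(|\gamma|\ge \ell/2)+\pr\bigl(\|0-[0]\|_\infty\ge \ell/(2d)\bigr).
\end{align*}
For $\ell\ge 2\rho\|z\|_\infty$, the first term is at most $c^{-1}e^{-c\ell/2}$ by the bound for $\gamma$. The second term is controlled by Lemma~\ref{Lem: hole} applied at $z=0$, which gives at most $c^{-1}\exp(-c(\ell/2d)^{d-1})$. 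Since $d\ge2$, for $\ell\ge 2d$ we have $(\ell/2d)^{d-1}\ge \ell/(2d)$, so this term is at most $c^{-1}e^{-c\ell/(2d)}$.

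\emph{Constants.} Choose $C\ge 2\rho$ so that both estimates apply whenever $\ell\ge C\|z\|_\infty$. Also take $C$ large enough that $C e^{-\ell/C}\ge 1$ for $\ell<2d$; in that range the bound is trivial, which also covers the degenerate case $z=0$. Finally, take $C$ large enough to absorb the prefactors $2c^{-1}$ and the rates $c/2$ and $c/(2d)$ into $Ce^{-\ell/C}$.

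No step here is a genuine obstacle. The only care needed is the constant bookkeeping for small $\ell$, and using $d\ge 2$ to turn the stretched-exponential tail $e^{-c\ell^{d-1}}$ of Lemma~\ref{Lem: hole} into an exponential one in $\ell$.
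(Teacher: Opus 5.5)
Your proposal is correct and follows the paper's approach. The paper quotes the first display from \cite{garet2009moderate} and treats the consequence as immediate; the argument it relies on is your union bound $|\eta|\le|\gamma|+d\|[0]\|_\infty$ combined with Lemma~\ref{Lem: hole}, written out later for $\gamma^1\cup\gamma^2$ in the proof of Proposition~\ref{Prop: bound for influence}, and your handling of small $\ell$ and of the $e^{-c\ell^{d-1}}$ tail via $d\ge 2$ supplies the constant bookkeeping the paper omits.
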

\begin{lemma} \cite[Lemma 7.104]{grimmett1999percolation} \label{lem: twodisjointclusters}
    For any $\varepsilon > 0$, there exists a constant $c>0$ such that for all $\ell > 0$,
    \begin{align*}
        \pr(\, \exists \, \text{two disjoint open clusters with diameter at least $\varepsilon \ell$ in $\Lambda_\ell$} ) \leq c^{-1} \exp(-c \ell).
    \end{align*}
\end{lemma}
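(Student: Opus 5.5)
The plan is a standard static renormalization (Pisztora-type coarse graining): the event should force a macroscopic set of bad blocks, and a Peierls count then gives the bound. For an integer $k$ fixed later, depending only on $p,d,\varepsilon$, tile $\Lambda_\ell$ by boxes $B_i:=\Lambda_k(ki)$, $i\in\Z^d$, each with enlargement $B_i':=\Lambda_{3k}(ki)$. Call $i$ \emph{good} if two things hold.
\begin{itemize}
\item[(1)] $B_i'$ contains an open crossing cluster $\kC_i$.
\item[(2)] Every open path inside $B'_i$ of diameter at least $k/10$ is connected to $\kC_i$ inside $B_i'$.
\end{itemize}
Condition (2) implies that $\kC_i$ is unique and that, for neighbouring $i,j$, the clusters $\kC_i$ and $\kC_j$ are connected. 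By Lemma \ref{Lem: pro of crossing event} and the uniqueness estimate for crossing clusters in boxes (Antal--Pisztora; in $d\ge3$ via Grimmett--Marstrand slab technology, in $d=2$ via duality), $\pr(i \text{ is good})\to1$ as $k\to\infty$. The good event depends only on edges in $B'_i$, so the field of good indices is $7$-dependent. By Liggett--Schonmann--Stacey it stochastically dominates a Bernoulli site percolation of density $q(k)\to1$.

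The deterministic step is as follows. Suppose $\kC^1,\kC^2$ are disjoint open clusters in $\Lambda_\ell$, each of diameter at least $\varepsilon\ell$. Each $\kC^j$ contains an open path of diameter at least $\varepsilon\ell$. That path passes through a coarse-grained $*$-connected set $A_j$ of at least $\varepsilon\ell/(3k)$ blocks, and inside each such block it contains a sub-path of diameter at least $k/10$. If every block in $A_1\cup A_2$, and along some coarse path joining $A_1$ to $A_2$ inside the coarse box, were good, then condition (2) and the overlap of neighbouring crossing clusters would connect $\kC^1$ to $\kC^2$. That contradicts disjointness.

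Hence one of the following must hold:
\begin{itemize}
\item[(a)] Some $A_j$ contains a bad block. This is not yet enough, so we strengthen the requirement: it suffices that $A_j$ contains a set of bad blocks meeting every coarse path of the relevant type.
\item[(b)] More robustly, the good blocks meeting $A_1$ and those meeting $A_2$ lie in different good $*$-clusters of the coarse box $\Lambda_{\ell/k}$.
\end{itemize}
Since both $A_j$ have coarse diameter at least $\varepsilon\ell/(3k)$, in case (b) standard planar or higher-dimensional separation arguments produce a $*$-connected (for $d\ge3$, plaquette-connected) set of bad blocks of size at least $c\varepsilon\ell/k$. Case (a) is handled the same way, replacing $A_j$ by its good part.

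It remains to run the Peierls bound. The number of $*$-connected coarse sets of size $m$ containing a given vertex is at most $C^m$, and there are at most $(\ell/k)^d$ possible base points. This gives
\begin{align*}
\pr(\cdot)\le (\ell/k)^d \sum_{m\ge c\varepsilon\ell/k} C^m (1-q(k))^m \le c^{-1}e^{-c\ell},
\end{align*}
once $k$ is large enough that $C(1-q(k))<1/2$; small $\ell$ is absorbed into the constant.

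I expect the main obstacle to be the input $\pr(i\text{ good})\to1$, that is, uniqueness of large clusters in a box uniformly up to $p_c$. This requires Grimmett--Marstrand in $d\ge3$, and I would simply import it as a black box. A secondary technical point is the topological separation step in $d\ge3$, which I would handle by the usual $\ast$-boundary connectivity lemma (Deuschel--Pisztora).
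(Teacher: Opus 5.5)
\paragraph{A different route from the paper's.} The paper gives no argument for this lemma. It is quoted from Grimmett's monograph (Lemma 7.104) and used as a black box, so your proposal is a genuinely different, self-contained route. It is a static renormalization that upgrades a fixed-scale estimate to exponential decay. The rate $e^{-c\ell}$, rather than surface order, comes from a Peierls count over a line-like $\ast$-connected set of roughly $\varepsilon\ell/k$ bad blocks.

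Be clear about where the real content sits. If (1) holds and (2) fails, then $\Lambda_{3k}(ki)$ contains two distinct clusters of diameter at least $k/10$. So your input $\mathbb{P}(i \text{ good})\to 1$ is the lemma itself at $\ell=3k$, $\varepsilon=1/30$, in its weak $o(1)$ form. Your argument is a bootstrap, and it is non-circular only if that weak form is taken from a source proving it directly: duality for $d=2$, and slab percolation (Grimmett--Marstrand) plus Pisztora's fixed-scale uniqueness argument for $d\ge 3$. If you import the Antal--Pisztora uniqueness estimate in its usual exponential form, you have assumed the statement and the renormalization adds nothing. That is essentially what the paper does by citing.

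\paragraph{Steps that need repair.} Two steps fail as written, and one is muddled.

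(i) The statement concerns clusters of $\Lambda_\ell$, so connections must stay inside $\Lambda_\ell$. For blocks meeting $\partial\Lambda_\ell$, condition (2) may join two paths through edges of $B_i'\setminus\Lambda_\ell$. Then ``all relevant blocks good'' does not contradict disjointness. Define goodness with $B_i'\cap\Lambda_\ell$ instead (a box with sides between $3k$ and $6k$), and take the block input uniformly over such boxes.

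(ii) Conditions (1)--(2) join $\mathcal{C}_i$ and $\mathcal{C}_j$ only for nearest neighbours. For $j=i+e_1+e_2$, both crossing clusters can sit in the far corners and miss $B_i'\cap B_j'$ entirely. So the ``good $\ast$-clusters'' in your case (b) must be nearest-neighbour clusters. The dual separating set of bad blocks is then $\ast$-connected in every dimension, which matches your count; plaquettes play no role for this site process.

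(iii) Drop the split (a)/(b), since the good part of $A_j$ need not be connected. Use the standard statement instead. Suppose every $\ast$-connected set of bad blocks has diameter less than $m=c\varepsilon\ell/k$. Then there is a unique nearest-neighbour good cluster $\mathcal{G}$ of diameter at least $m$, and every connected coarse set of diameter at least $m$ meets it.
\begin{itemize}
\item Existence: otherwise a bad $\ast$-connected set separates two opposite faces of the coarse box, and such a set has diameter of order $\ell/k\gg m$.
\item Hitting: suppose $A\cap\mathcal{G}=\varnothing$, and let $W$ be the component of the complement of $\mathcal{G}$ containing $A$. The sites of $W$ adjacent to $\mathcal{G}$ are bad and $\ast$-connected (Deuschel--Pisztora, Tim\'ar). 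By hypothesis they lie in a box $Q$ of diameter less than $m$. The complement of $Q$ in the coarse box is connected, so $\mathcal{G}\subset Q$ or $W\subset Q$. Both are absurd.
\item Uniqueness follows by applying the hitting property to a second such good cluster.
\end{itemize}
Applied to $A_1$ and $A_2$, this joins $\mathcal{C}^1$ to $\mathcal{C}^2$ inside $\Lambda_\ell$ through the chain of $\mathcal{C}_i$'s. Your Peierls bound then finishes the proof.
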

%%%%%%%%%%%%%%%%%%%%%%%%%%%%%%%%%%%%%%%%%%%%%%%%%%
\subsection{The co-influence bound via effective dynamic radius}\label{Subsection: effective radius}
In \cite{can2024subdiffusive}, we introduced the notion of an effective radius to measure the change of the truncated passage time when resampling the state of one single edge. In the present paper, we develop a dynamical version of this notion, aimed at controlling the co-influence of the original truncated passage time $\rmT_M$  and the $t$-noise truncated passage time $\rmT^t_M$ when the weight of a given edge is resampled (see \eqref{Eq: Proposition co-impact}). In essence, effective dynamic radii act as a simple one-step renormalization process, allowing us to construct an open bypass of $e$ with optimal cost. By analyzing the properties of these radii, including the local dependence and light-tailed decay
distribution, we can bound the total cost of resampling edges using lattice animal theory. Let $t \geq 0$. We couple the first truncated passage percolation in original environment $\tau=(\tau_e)_{e \in \cE}$ and in $t$-noise environment $\tau(t)=(\tau_e(t))_{e \in \cE}$. We will call an edge $e$ is $t$-open ($1$-weight) or $t$-closed ($M$-weight) in environment $\tau(t)=(\tau_e(t))_{e \in \cE}$. Let us fix a presentation of edge $e = (x_e,y_e)$ for all $e \in \cE$ by some deterministic rule to breaking ties. Given $N \geq 1$ and $e \in \cE$, we now define $\Lambda_N(e): = \Lambda_N(x_e)$ and   
$$
   \rmA_{N}(e):= \Lambda_{3N}(e) \setminus \Lambda_N(e).
$$
Let us define the set of $t$-open paths in $A\subset \Z^d$ by 
 \begin{align*}
  \mathbb{O}^t{(u,v;A)}:=\{ \gamma \in \kP(A): {\bf s}(\gamma)= u, {\bf e}(\gamma)= v, \textrm{ $\gamma$ is $t$-open}\};\quad \mathbb{O}^t{(A)}:= \bigcup_{u,v \in A}  \mathbb{O}^t{(u,v;A)}.
  \end{align*}
For $u,v \in \Z^d$, and $A \subset \Z^d$, we denote the set of all geodesics of $\rmTmt$ as
$$
\G^t_M(u,v;A):= \{ \gamma \in \kP(A): {\bf s}(\gamma) =u, {\bf e}(\gamma) = v; \rmT_M^t(\gamma) : = \rmT^t_{A,M}(u,v)\},\quad \G^t_M(A):= \cup_{u,v \in A} \G^t_M(u,v;A),
$$
where we recall that ${\bf s}(\gamma)$ and ${\bf e}(\gamma)$ are the starting and ending vertices of $\gamma$, respectively. Also, define the set of modified geodesics by
 \begin{align*}
 \bH^t_M(u,v;A) &:= \Big \{\gamma \in \kP(A): {\bf s}(\gamma)= u, {\bf e}(\gamma)= v, \exists \, \pi \in \G^t_M(A): \gamma \setminus \pi \textrm{ is $t$-open}  \Big\}, \\
  \bH^t_M(A) &:= \bigcup_{u,v \in A} \bH^t_M(u,v;A),
\end{align*}
with the convention that an empty path is $t$-open (particularly, $\sO^t_M(A), \mathbb{O}^t(A) \subset \bH^t_M(A)$). In other words, $\bH^t_M(u,v;A)$ is the collection of paths from $u$ to $v$ inside $A$, formed by substituting segments of a geodesic with $t$-open paths. When $A = \Z^d$, we simply write $\G^t_M(u,v),\G^t_M$ and $\bH^t_M(u,v),\bH^t_M$, respectively. A path $\gamma \subset \rmA_N(e)$ is called a crossing path of $\rmA_N(e)$ if it joints $\partial \Lambda_N(e)$ and $\partial \Lambda_{3N}(e)$.
Let $\sC (\rmA_N(e))$ be the set of all crossing paths of $\rmA_N(e)$. Given $N \geq 1$, $C_* \geq 4$ and $e \in \cE$, let
\begin{align*}
   & \cV^{t}_N(e) := \{ \forall \gamma_1,\gamma_2 \in \bH^{t}_M(\Lambda_{C_*N}(e)) \cap \sC (\rmA_N(e)): \rmD_{\rmA_{N}(e)}^t(\gamma_1,\gamma_2) \leq C_*N \};  \\
    & \mathcal{W}_N^t(e):=\{\forall \,  x,y \in \Lambda_{3N}(e)\text{ with } \rmD^t_{\Lambda_{3N}(e)}(x,y)< \infty: \,\, \rmD^t_{\Lambda_{4N}(e)}(x,y) \leq C_* N\} .
\end{align*}
 For each $e \in \cE$, we now define the \emph{effective dynamic radius} of $e$ as 
\begin{align*}
    \re: = \inf\{ N\geq 1:\cV_N^{0}(e)\cap  \mathcal{W}_N^0(e) \cap \cV_N^{t}(e)  \cap  \mathcal{W}_N^t(e) \text{ occurs}\}.
\end{align*}
Roughly speaking, for each $s \in \{0,t\}$, $\re$ is the smallest radius that enable an $s$-open bypass of $e$ inside $\aAn$ for any crossing paths of $e$ with not many $s$-closed edges. The following proposition tells us some important properties of effective dynamic radius. Moreover, give an edge $e$ in an original path, we could construct upon this path a new path that avoids $e$. 
\begin{proposition}\label{Propo: good properties of re}
There exist constants $C_*=C_*(p,d) \geq 4$ and $c = c(p,d) >0$ such that the following holds for all $t\geq 0$.
\begin{itemize}
    \item [(i)] For all $e \in \cE$ and $\ell \geq 1$, the event $\{\re = \ell\}$ only depends on the status of any edges in $\Lambda_{C_*\ell}(e)$.
    \item [(ii)] For all $e \in \cE$,
\begin{align*}
    \pr(\re \geq \ell) \leq c^{-1} \exp(-c\ell), \quad \forall \ell \leq \exp(cM).
\end{align*}
    \item [(iii)] Let $x,y \in \Z^d$ and $\gamma \in \bH^s_M(x,y)$ for some $s \in \{0,t\}$. Suppose that $e \in \gamma$ such that $x,y \notin \Lambda_{3\re}(e)$. Then there exists another path $\eta_e$ between $x$ and $y$ such that:
    \begin{itemize}
        \item [(iii-a)] $\eta_e \cap \Lambda_{\re}(e) = \varnothing$ and $\eta_e \setminus \gamma $ consists only of $s$-open edges;
        \item [(iii-b)] $ |\eta_e \setminus \gamma| \leq C_* \re$.
    \end{itemize}
\end{itemize}
\end{proposition}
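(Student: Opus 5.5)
We prove the three items separately. Each reduces to the static effective radius of \cite{can2024subdiffusive}, applied once to the environment $\tau=\tau(0)$ and once to $\tau(t)$. Both have the law of i.i.d.\ $F_M$ weights.

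\textbf{Item (i).} By definition, $\{\re\le \ell\}$ is an intersection over $N\le \ell$ of the events $\cV^s_N(e)$ and $\mathcal W^s_N(e)$, for $s\in\{0,t\}$.
\begin{itemize}
\item The event $\mathcal W^s_N(e)$ involves only chemical distances inside $\Lambda_{4N}(e)$. It is therefore measurable with respect to the states of edges in $\Lambda_{4N}(e)$.
\item The event $\cV^s_N(e)$ involves the set $\bH^s_M(\Lambda_{C_*N}(e))$, the set of crossing paths of $\rmA_N(e)$, and $\rmD^s_{\rmA_N(e)}$. Geodesics $\G^s_M(\Lambda_{C_*N}(e))$ are defined by the restricted passage time $\rmT^s_{\Lambda_{C_*N}(e),M}$, which only sees weights in $\Lambda_{C_*N}(e)$. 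Hence $\cV^s_N(e)$ is measurable with respect to $\Lambda_{C_*N}(e)$.
\end{itemize}
Taking $C_*\ge4$, $\{\re\le\ell\}$ depends only on edges in $\Lambda_{C_*\ell}(e)$. The event $\{\re=\ell\}=\{\re\le\ell\}\setminus\{\re\le\ell-1\}$ then has the same measurability. Here ``status'' means the pair $(\tau_e,\tau_e(t))$, equivalently $(\omega_e,\omega'_e,U_e)$ restricted to the box.

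\textbf{Item (ii).} A union bound gives
\[
\pr(\re\ge\ell)\le\sum_{s\in\{0,t\}}\bigl(\pr((\cV^s_{\ell}(e))^c)+\pr((\mathcal W^s_{\ell}(e))^c)\bigr).
\]
It suffices to control $N=\ell$ and not all $N<\ell$: failure of the events at $N=\ell$ is what forces $\re>\ell$, so up to a shift $\ell\mapsto\ell-1$ we only need $\pr((\cV^s_{\ell})^c\cup(\mathcal W^s_\ell)^c)\le Ce^{-c\ell}$. Since $\tau(s)$ is equal in law to $\tau$, both terms reduce to the static case.

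\emph{The $\mathcal W$ term.} This is the standard comparison of intrinsic and extrinsic distance inside a box. With probability at least $1-Ce^{-cN}$, the following hold (Lemmas \ref{Lem: pro of crossing event} and \ref{lem: twodisjointclusters}):
\begin{itemize}
\item $\Lambda_{3N}(e)$ has a crossing cluster;
\item there are no two disjoint open clusters of diameter at least $\varepsilon N$;
\item a local version of Lemma \ref{Lem: large deviation of graph distance Dlambda} applies (e.g.\ \cite{garet2009moderate}), bounding $\rmD_{\Lambda_{4N}}$ by $\rho\cdot 8N$ for points connected in $\Lambda_{3N}$ with diameter of order $N$.
\end{itemize}
For pairs connected in $\Lambda_{3N}$ only through small clusters, the path lies in a cluster of diameter at most $\varepsilon N$, so its length is bounded by a polynomial count that gives $\le C_*N$ on this event. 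I would instead handle such pairs via Lemma \ref{Lemma: 0 not to infty} combined with a union bound over the $O(N^d)$ points.

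\emph{The $\cV$ term.} This is the main obstacle. The key point is that any $\gamma\in\bH^s_M(\Lambda_{C_*N}(e))$ crossing $\rmA_N(e)$ contains few $s$-closed edges. Indeed, $\gamma\setminus\pi$ is open for some truncated geodesic $\pi$, and a geodesic segment of length of order $N$ cannot carry many $M$-edges once $M\gg N$: replacing it by an open detour of length $CN$ would be cheaper, on the good event from the $\mathcal W$ step. This is exactly where the constraint $\ell\le e^{cM}$ enters.

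Consequently, the crossing $\gamma$ meets the unique crossing cluster of $\rmA_N(e)$, and two such crossings are joined by an open path in $\rmA_N(e)$ of length $\le C_*N$, again using uniqueness of the large cluster and the $\mathcal W$ bound. I would follow the proof of the static statement in \cite{can2024subdiffusive} verbatim, applied separately to $s=0$ and $s=t$.

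\textbf{Item (iii).} Write $N=\re$. Since $x,y\notin\Lambda_{3N}(e)$ and $e\in\gamma$, the path $\gamma$ enters $\Lambda_N(e)$ and must exit $\Lambda_{3N}(e)$ on both sides. This yields two subpaths $\gamma_1$, ending at the last visit to $\partial\Lambda_{3N}$ before the first hit of $\partial\Lambda_N$, and $\gamma_2$, defined symmetrically after the last hit of $\partial\Lambda_N$. Both are crossing paths of $\rmA_N(e)$.

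These subpaths lie in $\bH^s_M(\Lambda_{C_*N}(e))$: subpaths of elements of $\bH^s_M$ remain in it, because subpaths of geodesics are geodesics, after restricting to the box. This restriction requires care, since a global geodesic restricted to a box is still a box-geodesic. By $\cV^s_N(e)$, there is an $s$-open path $\zeta\subset\rmA_N(e)$ of length $\le C_*N$ joining $\gamma_1$ and $\gamma_2$.

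Define $\eta_e$ to follow $\gamma$ from $x$ until it meets $\zeta$ in $\gamma_1$, then follow $\zeta$, then follow $\gamma$ from the meeting point in $\gamma_2$ to $y$. Then:
\begin{itemize}
\item $\eta_e$ avoids $\Lambda_N(e)$, since the portions of $\gamma_1,\gamma_2$ it uses are outside $\Lambda_N$;
\item $\eta_e\setminus\gamma\subset\zeta$ is $s$-open, which gives (iii-a);
\item $|\eta_e\setminus\gamma|\le C_*N$, which gives (iii-b).
\end{itemize}
Finally, erase loops to make $\eta_e$ self-avoiding; this does not increase $\eta_e\setminus\gamma$.
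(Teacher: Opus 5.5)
Your overall plan matches the paper's proof, which is itself only a reduction to the static results of \cite{can2024subdiffusive}. Item (i) follows from the measurability of $\cV^s_N(e)$ and $\mathcal W^s_N(e)$ for $N\le\ell$. Item (ii) follows from a union bound over $s\in\{0,t\}$ of single-scale failure probabilities, plus the fact that $\tau(t)$ and $\tau$ have the same law. You are right to use single-scale bounds here: $\re$ is not the maximum of the two static radii. Item (iii) is the bypass through $\cV^s_N(e)$. However, two of the justifications you add on top of this plan do not hold as written.

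In (ii), your heuristic for the $\cV$ term cannot be what drives the estimate. The range $\ell\le\exp(cM)$ allows $N\gg M$. In that regime, comparing with an open detour of length $CN$ does not rule out a truncated geodesic crossing $\rmA_N(e)$ that carries of order $N/M$ edges of weight $M$. An argument that needs ``$M\gg N$'' would only give the range $\ell\lesssim M$. So this part rests entirely on citing the static proof, exactly as in the paper.

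In (iii), you assert that $\gamma_1,\gamma_2\in\bH^s_M(\Lambda_{C_*N}(e))$, but this is justified only in special cases. It holds when the $s$-closed edges of $\gamma_i$ lie on a single segment of the witness geodesic $\pi$ that stays inside $\Lambda_{C_*N}(e)$. For instance, it holds when $\gamma\in\G^s_M(x,y)$ itself. Then $\gamma_i$ is a sub-geodesic contained in $\Lambda_{3N}(e)$, and $\rmT^s_{A,M}\ge\rmT^s_M$ makes it an element of $\G^s_M(\Lambda_{C_*N}(e))$.

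For a general modified geodesic this can fail. The closed edges of $\gamma_i$ may sit on different pieces of $\pi$, joined by excursions of $\pi$ outside $\Lambda_{C_*N}(e)$. In that case you have not exhibited a single $\pi'\in\G^s_M(\Lambda_{C_*N}(e))$ with $\gamma_i\setminus\pi'$ $s$-open. Your remark that a global geodesic restricted to a box is a box-geodesic is true only for a single segment lying in the box, so it does not cover this case.

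You have three options here:
\begin{itemize}
\item supply an argument that the relevant piece of $\pi$ stays in $\Lambda_{C_*N}(e)$;
\item invoke \cite[Proposition 2.7]{can2024subdiffusive}, as the paper does;
\item state (iii) only for $\gamma\in\G^s_M(x,y)$, which is the only case used later, in Propositions \ref{Prop: bound influence by re} and \ref{Prop: comparison of pitl and pikm}.
\end{itemize}

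The rest of your construction in (iii) is fine: the choice of $\gamma_1,\gamma_2$, the concatenation with $\zeta\subset\rmA_N(e)$, the avoidance of $\Lambda_N(e)$, and the loop erasure.
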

\begin{proof} The proof of this proposition follows from \cite[Proposition 2.6 and 2.7]{can2024subdiffusive}. We observe that $\{\re = \ell\}$ depends solely on the family of events $\{ \cV_N^{0}(e) ,\mathcal{W}_N^0(e),\cV_N^{t}(e), \mathcal{W}_N^t(e)\}_{ 1\leq N \leq \ell}$.
Hence, (i) follows from the fact that both $\cV_N^{t}(e)$ and $\mathcal{W}_N^t(e)$ are measurable with respect to any edge-weights inside $\Lambda_{C_*N}$ for all $t \geq 0$ and $1 \leq N \leq \ell$.

To prove (ii), we observe that 
\begin{align*}
    \pr(\re \geq \ell) & \leq \pr(\{\cV_{\ell-1}^{0}(e)  \cap  \mathcal{W}_{\ell-1}^0(e) \cap \cV_{\ell-1}^{t}(e) \cap  \mathcal{W}_{\ell-1}^t(e)\}^c) \\
    & \leq \pr(\{\cV_{\ell-1}^{0}(e) \cap   \mathcal{W}_{\ell-1}^0(e)\}^c) + \pr(\{\cV_{\ell-1}^{t}(e)  \cap  \mathcal{W}_{\ell-1}^t(e)\}^c).
\end{align*}
It follows the proof of \cite[Proposition 2.6]{can2024subdiffusive} that there exist constants $C_*=C_*(p,d) \geq 4$ and $c = c(p,d) \in (0,1)$ such that for all $t\geq 0$ and $\ell \geq 2$,
\begin{align*}
    \pr(\{\cV_{\ell-1}^{t}(e)  \cap  \mathcal{W}_{\ell-1}^t(e)\}^c) \leq c^{-1}\exp(-c\ell).
\end{align*}
Combining these estimates, we obtain (ii). The proof of (iii) is the same as \cite[Proposition 2.7]{can2024subdiffusive}.
\end{proof}
From now on, we fix the constant $C_*$ as in Proposition \ref{Propo: good properties of re}. Let us now estimate the co-influence of resampling an edge on first truncated passage time at time $t$ via a
truncation of effective dynamic radius. We denote by $\kC_\infty^{t,e}$ the infinite cluster of $\kC^t_\infty \setminus e$, which is almost surely unique. Let $[z]_t^e$ denote the regularized point of $z$ when closing $e$, i.e. $[z]_t^e$ is the closest point to $z$ in $\kC^{t,e}_\infty$, with some deterministic rule breaking ties.
 For short, we simply write $\kC^{0,e}_\infty: =\kC^{e}_\infty$ and $[z]_0^e = [z]^e$. For any $t \geq 0$, we define
\begin{align*}
        \cQ_e^{t}:= \{[0]_t = [0]_t^{e},[nx]_t = ,[nx]_t^{e}\}.
\end{align*}
The event $\cQ_e^t$ indicates that the regularized points $[0]_t$ and $[nx]_t$ are unchanged when closing the edge $e$.
\begin{proposition}\label{Prop: bound influence by re}
On the event $\{\cQ_e^{0} \cap \cQ_e^{t}\}$, we have the following holds.
\begin{itemize}
    \item [(i)] 
\begin{align}\label{EQ: lower bound delta}
      \I(e \in \tlpim(0,nx) \cap \tlpimt(0,nx)) \leq   \influ_e(\rmtlTm,\rmtlT^t_M); 
   \end{align}
   \item [(ii)] 
     \begin{align}\label{Eq: Proposition co-impact}
     0 \leq \influ_e(\rmtlTm,\rmtlT^t_M) \I(\tau_e = \tau_e(t) =1) \leq (3M\I(\cU^0_e \cup \cU^t_e)+(\hre)^2)\I(e \in \tlpim(0,nx) \cap \tlpimt(0,nx)),
    \end{align}
    where for $s \in \{0,t\}$,
    \begin{align*}
       \cU_e^{s}:= \left\{ 3 \re \geq \min(\|e- [0]_{s}\|_{\infty},
\|e-[nx]_{s}\|_\infty)\right\}, \quad  \hre := C_*\re \wedge M.
    \end{align*}
\end{itemize}
\end{proposition}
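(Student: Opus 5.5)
The plan is to treat the two inequalities separately, using throughout that $\rmtlT_M$ and $\rmtlT_M^t$ are integer valued and nondecreasing in each edge weight. On $\cQ_e^0\cap\cQ_e^t$ the regularized points $[0],[nx],[0]_t,[nx]_t$ do not move when the state of $e$ is switched between $1$ and $M$. Hence each discrete derivative $\nabla^{M,1}_e\rmtlT^s_M(0,nx)$, $s\in\{0,t\}$, is just the difference of two truncated passage times between \emph{fixed} endpoints. By monotonicity each factor is $\ge 0$, which already gives the lower bound $0\le \influ_e$ in (ii).

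For (i), fix $s\in\{0,t\}$ and suppose $e\in\tlpi^s_M(0,nx)$, i.e.\ $e$ lies on every geodesic of $\rmtlT^s_M(0,nx)$.
\begin{itemize}
\item If $\tau_e(s)=1$, put weight $M$ on $e$. Any new geodesic either still uses $e$, and then costs at least $\rmtlT^s_M+M-1$, or avoids $e$. A path avoiding $e$ was not a geodesic before, so by integrality it costs at least $\rmtlT^s_M+1$. Hence $\nabla^{M,1}_e\rmtlT^s_M\ge 1$.
\item If $\tau_e(s)=M$, lowering the weight of $e$ to $1$ lowers the cost of every old geodesic by $M-1$. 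Hence again $\nabla^{M,1}_e\rmtlT^s_M\ge M-1\ge 1$.
\end{itemize}
Multiplying the bounds for $s=0$ and $s=t$ gives $\influ_e\ge 1$ whenever $e\in\tlpim\cap\tlpimt$, which is \eqref{EQ: lower bound delta}.

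For the upper bound in (ii), work on $\{\tau_e=\tau_e(t)=1\}$. Suppose first that $e\notin\tlpi^s_M(0,nx)$ for some $s$. Then some geodesic avoids $e$, so closing $e$ does not change the passage time, the $s$-factor vanishes, and so does $\influ_e$. So we may assume $e$ lies in both intersections. Each factor is trivially at most $M-1$: keep a geodesic and only change the weight of $e$.

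On $(\cU^s_e)^c$ we have $[0]_s,[nx]_s\notin\Lambda_{3\re}(e)$. Take a geodesic $\gamma\in\G^s_M([0]_s,[nx]_s)\subset\bH^s_M$ through $e$. Proposition \ref{Propo: good properties of re}(iii) gives a path $\eta_e$ with the following properties:
\begin{itemize}
\item $\eta_e$ avoids $\Lambda_{\re}(e)$, hence avoids $e$;
\item the new edges $\eta_e\setminus\gamma$ are $s$-open;
\item $|\eta_e\setminus\gamma|\le C_*\re$.
\end{itemize}
Thus $\rmT^s_M(\eta_e)\le\rmT^s_M(\gamma)+C_*\re$, and $\eta_e$ is unaffected by closing $e$. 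So the $s$-factor is at most $C_*\re$, and combined with the trivial bound it is at most $\hre$. When neither $\cU^0_e$ nor $\cU^t_e$ holds this yields $\influ_e\le(\hre)^2$.

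\textbf{Main obstacle.} The remaining case is $\cU^0_e\cup\cU^t_e$, where the estimate must be $3M+(\hre)^2$. Bounding both factors by $M$ is too crude, and if only one of the events holds we get $M\hre$. The first thing to try is a sharper estimate for the factor on $\cU^s_e$. Since $e$ is then within $3\re$ of $[0]_s$ or $[nx]_s$, I would reroute near that endpoint using the event $\mathcal W^s_N(e)$ (local graph-distance control in $\Lambda_{4N}(e)$). The aim is to show that this factor is $\le 3$ unless $C_*\re\ge M$, in which case $\hre=M$ and the product is $\le(\hre)^2$. If this does not work, I would fall back on $M\hre\le M^2\I(\cU)+\dots$ and let the later summation absorb the cost through the tail of $\re$ near the endpoints. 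That is the step I expect to need the most care.
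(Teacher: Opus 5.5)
Up to the case you flag as the main obstacle, your argument is the paper's. This covers: integrality of $\rmtlT^s_M$ for (i), where your split into $\tau_e(s)=1$ and $\tau_e(s)=M$ is more careful than the paper's one-line pivotality remark; the vanishing of the $s$-factor when some geodesic avoids $e$; the trivial bound by $M$; and Proposition~\ref{Propo: good properties of re}(iii) on $(\cU^s_e)^c$. Together these give $0\le \nabla^{M,1}_e\rmtlT^s_M(0,nx)\,\I(\tau_e(s)=1)\le (M\I(\cU^s_e)+\hre)\,\I(e\in\tlpi^s_M(0,nx))$ for $s\in\{0,t\}$.

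The obstacle comes from a misprint. The coefficient $3M$ in \eqref{Eq: Proposition co-impact} should read $3M^2$. The paper's proof ends exactly with your fallback: it multiplies the two per-factor bounds and uses $\hre\le M$ to get
\[
(\I(\cU^0_e)M+\hre)(\I(\cU^t_e)M+\hre)\le 3\I(\cU^0_e\cup\cU^t_e)M^2+(\hre)^2 .
\]
This $M^2$ version is what is used in \eqref{Eq: co-impact}, and the cost is absorbed there as you anticipated. If $\min(\|e-[0]_s\|_\infty,\|e-[nx]_s\|_\infty)\ge 3M$, then $\cU^s_e$ forces $\re\ge M$, hence $\hre=M$ and $M^2\I(\cU^s_e)\le(\hre)^2$. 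Otherwise $e$ is one of the $O(M^d)$ overlap edges near an endpoint, contributing $O(M^{d+2})=O((\log n)^{2d+4})$. So your fallback is the intended proof, not a weaker substitute.

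You should drop the sharpened route, because a bound of $3$ on the $s$-factor on $\cU^s_e$ whenever $C_*\re<M$ is false. Take $[0]_s=x_e$, close all edges at $x_e$ except $e$ and one edge pointing away from $y_e$, and close a few further edges so that every bypass must walk around $x_e$. Then the factor is at least $4$, while nothing in this local picture makes $C_*\re$ comparable to $M=\lfloor(\log n)^2\rfloor$. Structurally, $\mathcal W^s_N(e)$ is evaluated with $e$ in its current ($s$-open) state, so the short connections it guarantees may run through $e$ itself. Proposition~\ref{Propo: good properties of re}(iii) is only available when both endpoints lie outside $\Lambda_{3\re}(e)$, so the paper simply pays $M^2$ near the endpoints.
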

\begin{proof}
 Suppose that the event $\{\cQ_e^{0} \cap \cQ_e^{t}\}$ occurs. Then for $s \in \{0,t\}$, the regularized points $[0]_s$ and $[nx]_s$ remains unchanged when the edge $e$ is closed ($\tau_e = M$), and so $\nabla^{M,1}_e \rmtlTms(0,nx)$ is non-negative. Notice that $\rmtlTms(0,nx)$ always takes the value in $\N$. Therefore, if $e \in   \tlpi^s_M(0,nx)$ then $e$ is a pivotal edge of $\rmtlTms(0,nx)$, i.e. the value of $\rmtlTms(0,nx)$ significantly modifies when flipping the value of $\tau_e$, and thus for all $s \in \{0,t\}$,
\begin{align*}
    \rmtlTms(0,nx) \circ \sigma^{M}_e \geq \rmtlTms(0,nx) \circ \sigma^{1}_e +1.
\end{align*}
This implies that on the event $\{\cQ_e^{0} \cap \cQ_e^{t}\}$,
\begin{align*}
    \I(e \in \tlpim(0,nx))\leq \nabla^{M,1}_e \rmtlTm(0,nx), \quad  \I(e \in \tlpimt(0,nx))\leq \nabla^{M,1}_e \rmtlTmt(0,nx).
\end{align*}
Multiply both sides of this inequalities, we obtain \eqref{EQ: lower bound delta}. 

 Suppose further that $e$ is $s$-open (i.e. $\tau_e(s) =1$) and $e \notin  \tlpi^s_M(0,nx)$, then closing $e$ ($\tau_e(s) = M$) has no effect on the geodesic, and thus
   $\nabla^{M,1}_e \rmtlTms(0,nx)= 0$. Hence,
\begin{align}\label{Eq: delta1}
 0 \leq  \nabla^{M,1}_e \rmtlTms(0,nx)\I(\tau_e(s) =1) \leq \nabla^{M,1}_e \rmtlTms(0,nx) \I(e \in \tlpi^s_M(0,nx)).
\end{align}
We remain estimate the right hand side of \eqref{Eq: delta1}. If $e\in \tlpi^s_M(0,nx)$, it is clear that
\begin{align}\label{Eq: Bound for delta M,1}
    \nabla^{M,1}_e \rmtlTms(0,nx) \leq M.
\end{align}
Let $\gamma$ be the geodesic of $\rmtlTms(0,nx)$ with some deterministic rules breaking ties. If $ e \in  \tlpi^s_M(0,nx) \subset \gamma$ and $\{\cU^s_{e}\}^c$ occurs, then neither $[0]_s$ nor $[nx]_s$ belongs to $\Lambda_{3\re}(e)$. Applying Proposition \ref{Propo: good properties of re} (iii) to $\gamma \in \sO^s_M([0]_s,[nx]_s) \subset\bH^s_M([0]_s,[nx]_s) $ and $e \in \gamma$, there exists another path $\eta_e$ between $[0]_s$ and $[nx]_s$ such that
\begin{align*}
    \nabla^{M,1}_e \rmtlTms(0,nx) \leq \rmT_M^s(\eta_e) \setminus \gamma = |\eta_e \setminus \gamma| \leq  C_* \re. 
\end{align*}
Combining the last three  estimates, we get for all $s \in \{0,t\},$
 \begin{align*}
        0 \leq
        \nabla^{M,1}_e \rmtlTms(0,nx)\I(\tau_e(s) =1) \leq (\I(\cU^s_e)M+\hre)\I(e \in \tlpi^s_M(0,nx)).
    \end{align*}
Hence, on the event $\{\cQ_e^{0} \cap \cQ_e^{t}\}$, we have
\begin{align*}
        0 \leq
        \nabla^{M,1}_e \rmtlTm(0,nx)\nabla^{M,1}_e & \rmtlTmt(0,nx)\I(\tau_e=\tau_e(t) =1) \\
      & \leq (\I(\cU^0_e) M+ \hre)(\I(\cU^t_e) M+ \hre)\I(e \in \tlpi_M(0,nx)\cap \tlpi^t_M(0,nx))\\
         & \leq (3\I(\cU^0_e \cup \cU^t_e) M^2+ (\hre)^2)\I(e \in \tlpi_M(0,nx)\cap \tlpi^t_M(0,nx)),
    \end{align*}
as required. 
\end{proof}
\subsection{Estimates for lattice animals}
To manage the total cost of edge resampling, we aim to estimate the sum of effective dynamic radii along a random path. Although these radii are not independent, their dependence is relatively local. Using lattice animal theory in dependent environments (see, e.g., \cite{nakajima2019first,damron2020estimates}), we can deduce an upper bound for this sum of these radii.

Let $\mathcal{P}_L$ denote the set of all paths $\gamma$ starting from $0$ satisfying $|\gamma| \leq L$. Given $A >0$ and  $N\in \N$, a collection of Bernoulli random variables $(I_{e,N})_{ e \in \cE}$ is called  $AN$-dependent if for all $e \in \cE$, the variable $I_{e,N}$ is independent of  the random variables $(I_{e',N})_{e' \not \in \kE(\Lambda_{AN}(e))}$.
Each path $\gamma \in \mathcal{P}_L$ is called a lattice animal of size at most $L$. The greedy
lattice animal problem with edge-weights $(I_{e,N})_{e \in \cE}$ seeks to estimate the maximal weight of any path of maximal size at most $L$. Particularly, we define
\begin{align*}
 \forall \gamma \in \kP_L ,\quad  {\rm \Gamma}(\gamma) := \sum_{e\in \gamma}I_{e,N}, \quad {\rm \Gamma}_{L,N} := \max_{\gamma \in \mathcal{P}_L} {\rm \Gamma}(\gamma).
\end{align*}
\begin{lemma} \cite[Lemma 2.6]{nakajima2019first} \label{lemmaxbound}
 Let $A>0$, $N\in \N$ and   a collection of $AN$-dependent Bernoulli random variables $(I_{e,N})_{ e \in \cE}$. There exists a positive constant $C$ depending on $A, d$ such that for all $L \in \N$,
\begin{align*}
\E[{\rm \Gamma}_{L,N}] \leq C L N^{d} q_N^{1/d}, \quad \textrm{where} \quad q_N :=\sup_{e \in \kE(\Z^d)} \E[I_{e,N}].
\end{align*}
\end{lemma}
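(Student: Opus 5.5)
The plan is to reduce the $AN$-dependent problem to a bounded number of independent greedy lattice animal problems on a coarse-grained lattice, and then invoke (or reprove) the classical bound for i.i.d. Bernoulli weights.

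\textbf{Step 1: split into independent classes.} Set $K := \lceil 2AN\rceil + 1$. Partition $\kE(\Z^d)$ into at most $d K^d$ classes $\kE_j$ according to the orientation of an edge and the residue of $x_e$ modulo $K\Z^d$. Two distinct edges in the same class lie at $\|\cdot\|_\infty$-distance at least $K > AN$, so each lies outside $\Lambda_{AN}$ of the other. By the $AN$-dependence assumption (applied to finite subfamilies, which is how the hypothesis is used), the variables $(I_{e,N})_{e\in\kE_j}$ are then independent Bernoulli variables with means at most $q_N$. Since
\[
\Gamma_{L,N} \le \sum_j \max_{\gamma\in\kP_L}\sum_{e\in\gamma\cap\kE_j} I_{e,N},
\]
it suffices to bound each class separately.

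\textbf{Step 2: coarse-grain each class.} Fix a class $j$. Each edge of $\kE_j$ is identified with a site of the coarse lattice $K\Z^d$, namely the point of $K\Z^d$ indexing its residue box. A path of length at most $L$ visits a $*$-connected set of coarse boxes of side $K$ containing the box of $0$, and this set has cardinality at most $C(L/K+1)$. Hence the $j$-th term is at most the maximal weight of a $*$-connected lattice animal of size $m := C(L/K+1)$ containing the origin, with i.i.d.-dominated Bernoulli$(q_N)$ site weights; monotone coupling reduces this to the exactly i.i.d.\ case.

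For i.i.d.\ Bernoulli$(q)$ weights, the classical greedy lattice animal estimate of Cox--Gandolfi--Griffin--Kesten (for $q$ small; for $q$ bounded below the trivial bound $m$ suffices) gives $\E[\max] \le C m q^{1/d}$. Summing over the $dK^d$ classes yields
\[
\E[\Gamma_{L,N}] \le C K^d\Bigl(\frac{L}{K}+1\Bigr) q_N^{1/d} \le C\bigl(L N^{d-1} + N^d\bigr) q_N^{1/d}.
\]
This is at most $C L N^d q_N^{1/d}$ whenever $L \ge 1$, so no separate treatment of $L < N$ is needed. (The cruder bound $E[\Gamma_{L,N}] \le C L^d q_N$ is also available there, but it would not suffice for $d \ge 2$ and $L \gg 1$.)

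\textbf{Main obstacle: the $q^{1/d}$ factor.} The key input is the i.i.d.\ estimate with the $q^{1/d}$ factor; a naive union bound only gives $O(m)$. If I had to prove it rather than cite it, I would argue as follows.

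An animal of size $m$ carrying $k$ open sites can be encoded, after ordering the open sites along a spanning tree, by $k$ sites with consecutive gaps $g_1,\dots,g_k$ satisfying $\sum g_i \le Cm$. The number of such configurations is at most $\prod_i C g_i^{d-1}$, times $C^k$. The probability that all $k$ chosen sites are open is $q^k$.

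By AM--GM, $\prod g_i \le (Cm/k)^k$. Hence
\[
\pr(\text{weight} \ge k) \le \bigl(C q (m/k)^{d-1}\bigr)^k \cdot C^k .
\]
This is summable and small for $k \ge C' m q^{1/d}$. Integrating the tail then gives $\E[\max] \le C m q^{1/d}$.

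The delicate points are making the $*$-connectivity of the coarse animal and the tree-ordering argument compatible, and keeping the constants depending only on $A$ and $d$.
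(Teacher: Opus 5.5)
The paper gives no proof of this lemma; it quotes it from Nakajima. Your argument is the standard route behind such statements and it is correct. You split the edges into $O(N^d)$ classes by direction and by the residue of $x_e$ modulo $K\Z^d$. Each class is a mutually independent family: this does follow from the one-versus-rest form of $AN$-dependence, by peeling off one variable at a time. Each class then coarse-grains to an ordinary lattice animal, and the i.i.d.\ Bernoulli bound of Cox--Gandolfi--Griffin--Kesten finishes the job. Because you bound the coarse animal by $C(L/K+1)$ rather than by $L+1$, you actually obtain the slightly sharper bound $C(LN^{d-1}+N^d)q_N^{1/d}$.

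There is one error in your self-contained sketch of the i.i.d.\ estimate. You bound the number of configurations for a \emph{fixed} gap sequence $(g_1,\dots,g_k)$ by $C^k\prod_i g_i^{d-1}$, but you never sum over the gap sequences themselves. There are $\binom{Cm+k}{k}\le (Cm/k)^k$ of them (for $k\le m$), so the correct bound is $\pr(\text{weight}\ge k)\le \bigl(Cq(m/k)^{d}\bigr)^k$, not $\bigl(Cq(m/k)^{d-1}\bigr)^kC^k$.

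This is not cosmetic. Your displayed inequality would give a $q^{1/(d-1)}$ rate, which is false: in $d=2$ it would give $O(mq)$, whereas for $m\ge q^{-1/2}$ a greedy animal collects order $mq^{1/2}$ open sites. With exponent $d$, the base drops below $1/2$ exactly when $k\ge C'mq^{1/d}$, so your conclusion survives.

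Also, in the regime $mq^{1/d}\lesssim 1$ you should sum the tail from $k=1$, or use $\E[\max]\le Cm^dq\le Cmq^{1/d}$ directly. Otherwise integrating from the threshold leaves an additive constant that is not $O(mq^{1/d})$.
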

Consequently, we can control the total weight of an arbitrary random path in terms of its size as follows.
\begin{lemma} \cite[Lemma 3.3]{can2024subdiffusive} \label{Lem: lattice animal} 
Let $A>0$ and $(X_e)_{e \in \cE}$ be a family of non-negative random variables such that for all $e \in \cE$ and $ N \in \N$, 
\begin{align}\label{Pd}
\textrm{the event } \{N-1 \leq X_e < N\} \textrm{ is independent of } (X_{e'})_{e'\in \kE(\Z^d) \setminus \Lambda_{A N}(e))}.
 \end{align} 
We define \(q_N := \sup_{e\in \kE(\Z^d) } \pr(N-1 \leq X_e < N).\) Let $f:[0,\infty) \to [0,\infty)$ be a function satisfying
\begin{align}\label{hd}
 B:=\sum_{N=1}^{\infty} (f_*(N))^2 N^{d+1} q_N^{1/d}< \infty, \quad \textrm{ where} \quad f_*(N):=\sup_{N-1\leq x< N} f(x).
 \end{align} 
 There exists $C=C(A,B)>0$ such that for any $L\geq 1$ and any random paths $\gamma$ starting from $0$ in the same probability space as $(X_e)_{e \in \cE}$,
 \begin{align}\label{Lem: e of sum of fxe}
 \E\left[ \left(\sum_{e \in \gamma} f(X_e) \right)^2\right]  \leq CL^2+ C\sum_{\ell =L} \ell^2 (\pr(|\gamma|=\ell)^{1/2}. 
 \end{align}
 \end{lemma}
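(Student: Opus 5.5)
We prove Lemma \ref{Lem: lattice animal} by splitting according to the length of $\gamma$ and controlling the long-path case with Cauchy--Schwarz. Fix $L\ge 1$ and write $S(\gamma):=\sum_{e\in\gamma} f(X_e)$. Since $\gamma$ starts at $0$, on $\{|\gamma|=\ell\}$ we have $S(\gamma)\le \Gamma^f_\ell:=\max_{\eta\in\kP_\ell}\sum_{e\in\eta} f(X_e)$, the greedy lattice animal value with weights $f(X_e)$. This gives
\begin{align*}
\E[S(\gamma)^2]\le \E[(\Gamma^f_L)^2]+\sum_{\ell> L}\E[(\Gamma^f_\ell)^2\I(|\gamma|=\ell)]\le \E[(\Gamma^f_L)^2]+\sum_{\ell>L}\E[(\Gamma^f_\ell)^4]^{1/2}\pr(|\gamma|=\ell)^{1/2}.
\end{align*}
It therefore suffices to prove the moment bounds $\E[(\Gamma^f_\ell)^2]\le C\ell^2$ and $\E[(\Gamma^f_\ell)^4]^{1/2}\le C\ell^2$ for all $\ell$, with $C=C(A,B)$.

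To get these bounds, we decompose according to the level sets of $X_e$. Set $I_{e,N}:=\I(N-1\le X_e<N)$. By \eqref{Pd}, the family $(I_{e,N})_e$ is $AN$-dependent, and $\E[I_{e,N}]\le q_N$. Since $f(X_e)\le\sum_N f_*(N)I_{e,N}$, we get
\begin{align*}
\Gamma^f_\ell\le\sum_{N\ge1} f_*(N)\,\Gamma_{\ell,N}, \qquad \Gamma_{\ell,N}:=\max_{\eta\in\kP_\ell}\sum_{e\in\eta}I_{e,N}.
\end{align*}
The $L^2$ triangle inequality then gives $\|\Gamma^f_\ell\|_2\le\sum_N f_*(N)\|\Gamma_{\ell,N}\|_2$. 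We bound the second moment crudely by $\Gamma_{\ell,N}\le\ell$, so $\E[\Gamma_{\ell,N}^2]\le \ell\,\E[\Gamma_{\ell,N}]\le C\ell^2N^dq_N^{1/d}$ by Lemma \ref{lemmaxbound}. Hence
\begin{align*}
\|\Gamma^f_\ell\|_2\le C\ell\sum_N f_*(N)N^{d/2}q_N^{1/(2d)}.
\end{align*}
Next we apply Cauchy--Schwarz, splitting the summand as $f_*(N)N^{(d+1)/2}q_N^{1/(2d)}\cdot N^{-1/2}$. This bounds the sum by $\bigl(\sum_N f_*(N)^2N^{d+1}q_N^{1/d}\bigr)^{1/2}\bigl(\sum_N N^{-2}\bigr)^{1/2}$, where the extra power of $N$ was split off to make the second series converge. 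By \eqref{hd} this is at most $C\sqrt B$. The fourth moment is handled the same way: $\E[\Gamma_{\ell,N}^4]\le\ell^3\E[\Gamma_{\ell,N}]$, so $\|\Gamma_{\ell,N}\|_4\le C\ell N^{d/4}q_N^{1/(4d)}$.

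The main obstacle is the summability of the fourth-moment series $\sum_N f_*(N)N^{d/4}q_N^{1/(4d)}$, because the exponent $q_N^{1/(4d)}$ is weaker than the $q_N^{1/d}$ that appears in \eqref{hd}. Two routes seem available. The first is to exploit the application at hand, where $q_N$ decays exponentially and $f$ is bounded, so any power of $q_N$ is summable. The second is to avoid the fourth moment altogether: bound $(\Gamma^f_\ell)^2\I(|\gamma|=\ell)$ more carefully using $\Gamma^f_\ell\le \|f\|_\infty\ell$ on the long paths, where in our use $f(x)=(C_*x\wedge M)^2$ is at most $M^2$. I would try the second route first, so as to keep the stated form of \eqref{Lem: e of sum of fxe}, and fall back on the first if the polynomial factor cannot be absorbed. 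Either way, combining the two regimes yields $\E[S(\gamma)^2]\le CL^2+C\sum_{\ell\ge L}\ell^2\pr(|\gamma|=\ell)^{1/2}$.
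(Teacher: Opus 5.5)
Your overall structure (split according to $|\gamma|$, dominate by the greedy animal $\Gamma^f_\ell$, decompose along the level sets $I_{e,N}$, apply Cauchy--Schwarz in the event $\{|\gamma|=\ell\}$) is sensible. However, the argument as written does not prove the lemma.

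\textbf{The second-moment step fails.} Your Cauchy--Schwarz step is miscomputed. With the split $f_*(N)N^{(d+1)/2}q_N^{1/(2d)}\cdot N^{-1/2}$, the second factor is $(\sum_N N^{-1})^{1/2}=\infty$, not $(\sum_N N^{-2})^{1/2}$. No other split rescues this, because under $B<\infty$ alone the series $\sum_N f_*(N)N^{d/2}q_N^{1/(2d)}$ can diverge. For example, take $X_e$ i.i.d., $f\equiv 1$, and $q_N^{1/d}=(N^{d+2}\log^2 N)^{-1}$ for $N\ge 2$. Then $B<\infty$, while your chain of inequalities yields $\sum_N (N\log N)^{-1}=+\infty$. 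Two things lose here:
\begin{itemize}
\item Minkowski over $N$ is linear in $f$, whereas $B$ is quadratic in $f$.
\item The estimate $\E[\Gamma_{\ell,N}^2]\le \ell\,\E[\Gamma_{\ell,N}]$ halves the power of $q_N$.
\end{itemize}
The short-path term is easily repaired by applying Cauchy--Schwarz along the path first. For $\eta\in\kP_\ell$ we have $(\sum_{e\in\eta}f(X_e))^2\le \ell\sum_{e\in\eta}f(X_e)^2$, hence $(\Gamma^f_\ell)^2\le \ell\sum_N f_*(N)^2\,\Gamma_{\ell,N}$. Lemma \ref{lemmaxbound} then gives $\E[(\Gamma^f_L)^2]\le CL^2\sum_N f_*(N)^2N^dq_N^{1/d}\le CBL^2$.

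\textbf{The long-path term is never established.} This is the more serious gap. You reduce it to $\E[(\Gamma^f_\ell)^4]^{1/2}\le C\ell^2$, which asks for more than is needed. You then observe, correctly, that your method produces a series in $q_N^{1/(4d)}$ not controlled by \eqref{hd}. Both of your fallbacks change the statement:
\begin{itemize}
\item Route one assumes exponential decay of $q_N$. That would suffice for Corollary \ref{corre}, but it is a different lemma.
\item Route two makes the constant depend on $\|f\|_\infty$, which is $M^2$ in the application and so grows with $n$. The lemma asserts $C=C(A,B)$ for arbitrary, possibly unbounded, $f$.
\end{itemize}
After the pathwise Cauchy--Schwarz above, no fourth moment is needed:
\[
\E\bigl[(\Gamma^f_\ell)^2\,\I(|\gamma|=\ell)\bigr]\le \ell\sum_{N} f_*(N)^2\,\E[\Gamma_{\ell,N}^2]^{1/2}\,\pr(|\gamma|=\ell)^{1/2}.
\]
The missing ingredient is therefore a genuine second-moment version of Lemma \ref{lemmaxbound} that keeps the full factor $q_N^{1/d}$. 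A bound such as $\E[\Gamma_{\ell,N}^2]^{1/2}\le C\ell N^{d+1}q_N^{1/d}$ would do. With it, the right-hand side is at most $CB\,\ell^2\pr(|\gamma|=\ell)^{1/2}$, which is exactly the shape of \eqref{Lem: e of sum of fxe} and matches the weights in $B$. The crude bound $\Gamma_{\ell,N}\le\ell$ only gives $\ell N^{d/2}q_N^{1/(2d)}$ and cannot supply this.
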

%%%%%%%%%%%%%%%%%%%%%%%%%%
Applying Lemma \ref{Lem: lattice animal} with $X_e= \hre$, $A=2C_*$, and $f(x)=x^2$, since the conditions \eqref{Pd} and \eqref{hd} follow from Proposition \ref{Propo: good properties of re} (i) and (ii) respectively, we have the following.
\begin{corollary}\label{corre}
There exists a constant $C>0$ such that for all   $t \geq 0$, $L \geq 1$ and random paths $\gamma$ starting from $0$ in the same probability space as $(\re)_{e \in \kE(\Z^d) }$,
 \begin{align}
 \E\left[ \left(\sum_{e \in \gamma} (\hre)^2\right)^2\right]  \leq CL^2+ C\sum_{\ell =L} \ell^2 (\pr(|\gamma|=\ell)^{1/2}. 
 \end{align}
\end{corollary}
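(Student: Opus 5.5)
The plan is to obtain Corollary \ref{corre} as a direct application of Lemma \ref{Lem: lattice animal}. We take $X_e := \hre = C_*\re \wedge M$, $f(x) := x^2$, and $A := 2C_*$ (enlarged if needed). Two hypotheses must be checked: the local dependence condition \eqref{Pd} and the summability condition \eqref{hd}. Once both hold, \eqref{Lem: e of sum of fxe} is exactly the claimed bound. The constant $C$ depends only on $A$ and $B$, so it suffices that $B$ is bounded uniformly in $t$ and $M$ (hence in $n$).

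\emph{Step 1: the dependence condition \eqref{Pd}.}
\begin{itemize}
\item For an integer $N$, the event $\{N-1 \le \hre < N\}$ is either empty or equal to $\{\re \in J_N\}$, where $J_N$ is a set of integers $\ell$ with $C_*\ell < N$.
\item If $N \le M$, then $J_N \subset \{\ell : \ell \le N/C_*\}$.
\item If $N > M$, the event is $\{\hre = M\}$ only when $M \in [N-1,N)$, that is, it equals $\{\re \ge M/C_*\}$. This is the complement of $\{\re < M/C_*\}$.
\item In both cases the event is a finite union of events $\{\re=\ell\}$ (or the complement of one), with $\ell \le N/C_*$. By Proposition \ref{Propo: good properties of re} (i), each $\{\re=\ell\}$ depends only on edges in $\Lambda_{C_*\ell}(e) \subset \Lambda_{N}(e)$.
\item One small point: Lemma \ref{Lem: lattice animal} wants independence from the variables $X_{e'}$, not from edge states. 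For $e'$ outside $\Lambda_{AN}(e)$ the event is not measurable in a small box. I would handle this exactly as in \cite[Lemma 3.3]{can2024subdiffusive}: in that lemma only independence from $X_{e'}$ restricted to the corresponding level sets is used, via the $AN$-dependence in Lemma \ref{lemmaxbound}. So I would verify that the indicators $I_{e,N} = \I(N-1\le \hre<N)$ form an $AN$-dependent family, which follows from the box measurability with $A = 2C_*$.
\end{itemize}

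\emph{Step 2: the summability condition \eqref{hd}.}
\begin{itemize}
\item We have $f_*(N) \le N^2$.
\item By Proposition \ref{Propo: good properties of re} (ii), $q_N \le \pr(\re \ge (N-1)/C_*) \le c^{-1}e^{-cN/C_*}$, valid for $N \le C_*\exp(cM)$.
\item For $N > M+1$ we have $q_N = 0$, since $\hre \le M$. Because $M \le \exp(cM)$ for large $M$, the tail bound covers every $N$ with $q_N \neq 0$.
\item Hence $B \le \sum_N N^{4+d+1} (c^{-1}e^{-cN/C_*})^{1/d} < \infty$, uniformly in $t$, $M$ and $e$.
\end{itemize}

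\emph{Main obstacle.} The only delicate point is the truncation at $M$ in Step 1. The tail estimate in (ii) holds only for $\ell \le \exp(cM)$, and the event $\{\hre = M\}$ is a tail event of $\re$. The cap $\hre \le M$ resolves both issues: it kills $q_N$ beyond $M+1$, and it keeps the level-set event for $N \approx M$ measurable in $\Lambda_{M}(e)$. Both facts depend on $C_* \ge 1$. With these checks the corollary follows by plugging into \eqref{Lem: e of sum of fxe}.
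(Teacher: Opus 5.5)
Your proposal is correct and follows the paper's route: the paper also derives the corollary by applying Lemma~\ref{Lem: lattice animal} with $X_e=\hre$, $A=2C_*$ and $f(x)=x^2$, checking \eqref{Pd} via Proposition~\ref{Propo: good properties of re}~(i) and \eqref{hd} via (ii). Your additional checks fill in details the paper leaves implicit: the cap at $M$ forces $q_N=0$ for $N>M+1$, so the restricted range $\ell\le\exp(cM)$ in (ii) is enough; and what is really verified (and used through Lemma~\ref{lemmaxbound}) is the $AN$-dependence of the level indicators $\I(N-1\le \hre<N)$, which is weaker than the literal independence from all $X_{e'}$ stated in \eqref{Pd}.
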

\subsection{Lower bound for the intersection of geodesics}
This subsection is devoted to proving that the size of the intersection of geodesics is of order $n$. 
\begin{proposition}
     \label{Thm: intersection of geo}
Assume that \eqref{Useful assump} holds. Then there exists a constant $c > 0$ such that
\begin{align}
    \E[|\tlpi(0,nx)|] \geq cn.
\end{align}
\end{proposition}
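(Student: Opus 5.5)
The plan is to get a positive density of \emph{pivotal} edges, i.e.\ edges lying on every geodesic of $\rmtlD(0,nx)$, from the strict decrease of the chemical distance under an increase of $p$. That strict decrease is exactly what the Berg--Kesten condition \eqref{Useful assump} provides. The link between the two is Russo's formula, together with the fact (already used in the proof of Proposition \ref{Prop: bound influence by re}) that $\rmtlD$ is integer valued.

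\textbf{Step 1 (Russo-type identity).} Fix $p_0=p$ and $p_1\in(p,\vec p_c(d))$. For $q\in[p_0,p_1]$ write $\E_q$ for the law with parameter $q$. As in the proof of Theorem \ref{Theorem: variance and chaos}, it is technically cleaner to run this argument for the truncated time $\rmtlT_M$ and transfer it to $\rmtlD$ via \eqref{Eq: variance difference}-type approximations. Then
\begin{align*}
\frac{d}{dq}\E_q[\rmtlT_M(0,nx)] = -\sum_{e\in\cE}\E_q\big[\nabla^{M,1}_e\rmtlT_M(0,nx)\big].
\end{align*}
On $\cQ_e^0$, the difference $\nabla^{M,1}_e\rmtlT_M$ vanishes unless $e\in\tlpi_M(0,nx)$. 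When $e\in\tlpi_M(0,nx)$ it is at most $M\I(\cU_e^0)+\hre$, by the one-time version of Proposition \ref{Prop: bound influence by re}(ii). The edges where $\cQ_e^0$ fails or $\cU^0_e$ occurs lie near $[0]$ or $[nx]$. By Lemmas \ref{Lem: hole} and \ref{Lemma: 0 not to infty} and Proposition \ref{Propo: good properties of re}(ii), they contribute only $O((\log n)^{C})$. Cauchy--Schwarz and Corollary \ref{corre} (with $L=Cn$, using Lemma \ref{Lem: large deviation of graph distance Dlambda} for the tail of $|\gamma|$) then give
\begin{align*}
\Big|\frac{d}{dq}\E_q[\rmtlT_M(0,nx)]\Big| \le C\,\E_q\big[|\tlpi_M(0,nx)|\big]^{1/2}\, n^{1/2} + C(\log n)^{C}.
\end{align*}

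\textbf{Step 2 (quantitative gap of the distance).} Show that
\begin{align*}
\E_{p_0}[\rmtlD(0,nx)]-\E_{p_1}[\rmtlD(0,nx)]\ge c(p_1-p_0)\,n
\end{align*}
for $n$ large. Here I follow van den Berg--Kesten's modification argument. When $q<\vec p_c(d)$, a geodesic cannot follow an oriented open path for long. Hence, along a positive fraction of disjoint mesoscopic boxes of size $K$ crossed by the geodesic, it makes a non-monotone detour. Opening $O(1)$ specific closed edges in such a box shortens the path by at least $2$. Coupling $p_0\le p_1$ monotonically, each such box has probability at least $c(p_1-p_0)$ of acquiring a shortcut, independently across boxes. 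This yields a total saving of at least $c(p_1-p_0)n$. This is the \textbf{main obstacle}: it requires a uniform-in-$q$, positive-density statement for such boxes, whereas the known results give only strict inequality of time constants. I would first try to extract it from the proof in \cite{cerf2022time,can2023lipschitz}, which is essentially renormalization at scale $K$, combined with the exponential decay of oriented clusters for $q<\vec p_c(d)$.

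\textbf{Step 3 (conclusion).} Integrate Step 1 over $[p_0,p_1]$ and combine with Step 2, transferring between $\rmtlT_M$ and $\rmtlD$ as in Step 1. This gives
\begin{align*}
c(p_1-p_0)n\le C(p_1-p_0)\,n^{1/2}\sup_{q\in[p_0,p_1]}\E_q[|\tlpi(0,nx)|]^{1/2}+C(\log n)^C,
\end{align*}
so some $q\in[p_0,p_1]$ has $\E_q|\tlpi(0,nx)|\ge cn$. To get the bound at $q=p$ itself, I would take $p_0$ slightly below $p$ and $p_1=p$. I would use that the Step 2 bound is uniform for $p_0$ in compact subsets of $(p_c(d),\vec p_c(d))$. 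Instead of bounding by $\sup_q$ as above, I would bound the pivotal count at parameter $q$ by its value at $p$, again by a $K$-box modification argument (comparability of laws under bounded local changes). This last comparison is routine but must be done carefully; alternatively one could run Step 2 directly as a Margulis--Russo lower bound at the single parameter $p$.
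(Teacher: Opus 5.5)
Your reduction does not make the problem easier. The two points you flag are exactly where it fails, and resolving them is equivalent to proving the proposition.

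\emph{Step 3 only gives an averaged statement.} Combining Steps 1 and 2 over $[p_0,p_1]$ shows that $\E_q[|\tlpi(0,nx)|]\ge cn$ on average over $q\in[p_0,p_1]$, i.e.\ for some $q=q_n$ that may depend on $n$. It says nothing at the fixed parameter $p$. Passing from $q$ to $p$ is not a ``bounded local change'': it alters the law of all $\asymp n^d$ relevant edges. Moreover $|\tlpi(0,nx)|$ is a global functional, so no continuity of $q\mapsto \E_q[|\tlpi(0,nx)|]/n$ uniform in $n$ is available to appeal to.

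Your fallback, a Margulis--Russo lower bound at $p$ itself, is circular. By Russo's formula and the one-time versions of Proposition \ref{Prop: bound influence by re}(i) and Lemma \ref{Lemma: total sum of influence of D and T}, one has $-\frac{d}{dq}\E_q[\rmtlT_M(0,nx)]\big|_{q=p}\ge p^{-1}\E_p[|\tlpi_M(0,nx)|]-O((\log n)^{C})$. Your Step 1 gives the matching upper bound $C\big(n\,\E_p[|\tlpi_M(0,nx)|]\big)^{1/2}+O((\log n)^{C})$. So a lower bound of order $n$ on the derivative at $p$ is equivalent to the statement you are proving. That is also what Step 2 amounts to if its constant $c$ stays uniform as $p_1-p_0\downarrow 0$ at fixed $n$.

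\emph{The Step 2 sketch cannot produce the factor $p_1-p_0$.} Under the monotone coupling, each closed edge opens independently with probability $(p_1-p_0)/(1-p_0)$. A shortcut needing $k$ specific closed edges therefore appears with probability of order $(p_1-p_0)^k$, and the saving is only of order $(p_1-p_0)^k n$. Linear dependence would require a positive density of closed edges that are individually pivotal, which is again a statement of the same type as the proposition.

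\emph{The missing idea} is to do the local modification at the fixed parameter $p$ by resampling, so that forcing a local configuration costs only a constant. This is the paper's route:
\begin{itemize}
\item Under \eqref{Useful assump}, \cite{jacquet2024strict} shows a box $B_N(z)$ is $(\rho,\delta)$-good, with $\rmD(u,v)\ge(1+\delta)\|u-v\|_1$ inside it, with probability $1-Ce^{-N/C}$.
\item A Peierls argument then shows the geodesic crosses at least $cn/N$ good boxes.
\item Resampling one such box, with probability at least $p^{d(2N+1)}$ it contains an open $\ell^1$-shortest path between the entry and exit points. This strictly beats the old crossing, so it forces the geodesics of the resampled configuration (which has the law of $\omega$) through that box.
\item Summing over boxes gives a bound of order $p^{d(2N+1)}(2N+1)^{-2d}n$ with $N$ fixed.
\end{itemize}
For this to count edges of $\tlpi(0,nx)$, one must arrange that the resampled box forces a common edge of all geodesics, not merely a visit. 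Note that in this argument \eqref{Useful assump} enters through the strict inequality $\mu_p>\|\cdot\|_1$, not through strict monotonicity in $p$.
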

The proof of this result follows block-resampling strategy whose key ingredient is a lower bound on the size of geodesics obtained in \cite[Theorem 1] {jacquet2024strict}. This control relies on the notion of patterns developed in \cite{jacquet2025geodesics}. Roughly speaking, we construct a pattern in which a geodesic is not optimal in terms of
the number of edges and then investigate the number of times it crosses a translate of this pattern. To streamline the exposition, we leave the proofs of Propositions~\ref{Thm: intersection of geo} to the Appendix.
\section{Dynamical variance inequalities for first truncated passage time}\label{Sec: 3}
\subsection{A dynamical formula for variance}
Let $0< \ell < L \in \R_{+} \cup \{\infty\}$. We emphasize that $L$ may be very large and can be $\infty$. For $p \in (p_c(d) ,1)$, we define the distribution $G_p$ on $\{\ell,L\}$ as
\begin{align*}
 G_p := p \delta_\ell + (1-p) \delta_{L},
\end{align*}
where $\delta_L$ stands for the Dirac delta distribution at $L$. Let $E$ be an at most countable index set. For each $e \in E$, let $\sigma_e^a: \{\ell,L\}^{|E|}\to \{\ell,L\}^{|E|}$ denote the operator that changes the value of the coordinate $e$ to $a \in \{\ell,L\}$. Let $X=(X_e)_{e \in E}$ be a family of i.i.d. random variables with the common distribution $G_p$. For any real-valued function $f:  \{\ell,L\}^{|E|} \to \R$, we define the discrete derivative of $f$ at an edge $e$ by
\begin{align*}
    \nabla^{a,b}_e f(X):= f \circ \sigma_e^{a}(X)- f \circ \sigma_e^{b}(X), \quad a,b \in \{\ell,L\},
\end{align*}
which measures the change in the value of $f(X)$ when the coordinate $X_e$ is switched from $a$ to $b$. In \cite{can2023lipschitz}, we used the following Margulis-Russo formula as a key tool to claim the Lipschitz-continuity’s of time constant.
\begin{lemma} \cite[Lemma 2.1]{can2023lipschitz}
\label{lemrusso} Assume that $E$ is finite. Let $f : \{\ell, L\}^{|E|} \to \R$  be a function such that $f(X)$ is integrable. Then, we have
\begin{align*}
 \dfrac{{\rm d} \E[f(X)]}{{\rm d} p} = \sum_{e \in E} \E[\nabla_e^{\ell,L} f(X)], \quad \nabla^{\ell,L}_e f(X) : = f \circ \sigma^\ell_e(X)- f \circ \sigma^L_e(X).
\end{align*}
\end{lemma}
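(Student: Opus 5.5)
This is the Margulis--Russo formula for a product measure on the two-point space $\{\ell,L\}$. Because $E$ is finite, $\E[f(X)]$ is a finite sum, and we differentiate it term by term. Write
\[
\E[f(X)] = \sum_{\xi \in \{\ell,L\}^{E}} f(\xi) \prod_{e\in E} p^{\I(\xi_e=\ell)}(1-p)^{\I(\xi_e=L)} .
\]
This is a polynomial in $p$. Integrability holds automatically as long as $f$ is finite-valued. If $L=\infty$, $f$ may take the value $\infty$, and we then need $f$ finite wherever it has positive probability; since $p<1$, that means $f$ is finite everywhere. So the differentiation is legitimate, and by the product rule
\[
\frac{{\rm d}\E[f(X)]}{{\rm d}p}=\sum_{e\in E}\sum_{\xi} f(\xi)\Big(\I(\xi_e=\ell)-\I(\xi_e=L)\Big)\prod_{e'\neq e} p^{\I(\xi_{e'}=\ell)}(1-p)^{\I(\xi_{e'}=L)} .
\]

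For a fixed $e$, we next group the configurations $\xi$ by their restriction $\xi_{-e}$ to $E\setminus\{e\}$. For each $\xi_{-e}$, the inner sum equals $f(\sigma_e^\ell\xi)-f(\sigma_e^L\xi)=\nabla_e^{\ell,L}f(\xi)$, weighted by the law of $X_{-e}$. The quantity $\nabla_e^{\ell,L}f(X)$ does not depend on $X_e$. Integrating against the law of $X_e$ therefore changes nothing, since the weights $p+(1-p)=1$. Hence the $e$-th term equals $\E[\nabla_e^{\ell,L}f(X)]$, and summing over $e$ gives the claim.

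The only point needing care is the case $L=\infty$: differences of the form $\infty-\infty$ must not occur. This is handled by the finiteness observation above (alternatively, one can work on the set where $f$ is finite and use that every configuration has positive probability). No step is a real obstacle; the argument is a direct computation.
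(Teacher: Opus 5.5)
Your proof is correct. On the finite space $\{\ell,L\}^{E}$ the map $p\mapsto\E[f(X)]$ is a polynomial, and the product rule produces the factor $\I(\xi_e=\ell)-\I(\xi_e=L)$. Regrouping by $\xi_{-e}$ then gives exactly $\E[\nabla_e^{\ell,L}f(X)]$, because that difference does not depend on $X_e$.

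The paper does not prove this lemma; it imports it from \cite{can2023lipschitz}. Your direct computation is the standard proof of the Margulis--Russo formula for finitely many coordinates.

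The aside about $L=\infty$ is unnecessary, since $f$ is assumed real-valued. If you keep it, note that every configuration having positive probability requires $0<p<1$, not only $p<1$.
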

 We now aim to establish a dynamical version of this formula.
Let $X'=(X'_e)_{e \in E}$ be an independent copy of $X=(X_e)_{e \in E}$ with
i.i.d coordinates. Let $U=(U_e)_{e\in E}$ be a family of i.i.d. uniform random variables on the interval $[0,1]$, independent of $X$ and $X'$. Given some $(t_e)_{e \in E} \in [0,1]^{|E|}$, we define $X((t_e)_{e\in E})= (X_e(t_e))_{e \in E}$ the $(t_e)_{e\in E}$-noise weight of $X$ as
\begin{align*}
  \forall e \in E,\quad  X_e(t_e)= \I(U_e \geq t_e) X_e + \I(U_e < t_e) X'_e,
\end{align*}
i.e. $X((t_e)_{e \in E})$ be obtained from $X$ by resampling independently each coordinate $e \in E$ with probability $t_e$. When $t_e = t_{e'} = t$ for all $e,e' \in E$, we simply write $X(t)$, instead of $X(t,\ldots,t)$. Let $f: \{\ell,L\}^{|E|}  \to \R$ be a real-valued function. Let $X^1=(X^1_e)_{e \in E}$ and $X^2=(X^2_e)_{e \in E}$ be two independent families of i.i.d.  random variables with common
distribution $G_p$, and independent of $X$ and $X'$. The following formula provides a representation of the dynamical covariance of a function $f$ in terms of the co-influence of its coordinates.
\begin{lemma}\label{Lemma: covariacne formula}
For any function $f \in L^2(G_p^{|E|})$, we have
    \begin{align} \label{Eq of lem: covariance}
\Cov(f(X),f(X(t))) = \int_{t}^1 \sum_{e \in E} \E\left[\nabla^{X_e,X_e^1}_e f(X) \nabla^{X_e,X_e^2}_e f(X(s))\right] ds.
\end{align}
 Moreover, the covariance $\Cov(f(X),f(X(t)))$ is non-negative and non-increasing with respect to $t \in [0,1]$.
\end{lemma}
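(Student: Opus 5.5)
The plan is to reduce the covariance formula to a one-coordinate-at-a-time computation along the interpolation in $t$. Set $\phi(t):=\E[f(X)f(X(t))]$. Then $\phi(1)=\E[f(X)]^2$, since $X(1)=X'$ is independent of $X$, and $\Cov(f(X),f(X(t)))=\phi(t)-\phi(1)=-\int_t^1\phi'(s)\,ds$. So the identity \eqref{Eq of lem: covariance} will follow once we show
\[-\phi'(s)=\sum_{e\in E}\E\bigl[\nabla^{X_e,X^1_e}_e f(X)\,\nabla^{X_e,X^2_e}_e f(X(s))\bigr].\]

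First, we treat the case where $E$ is finite. We work with the multi-parameter function $\Phi((t_e)_e):=\E[f(X)f(X((t_e)_e))]$. This function is affine in each $t_e$: conditioning on whether $U_e<t_e$ gives
\[\Phi=(1-t_e)\,\E[f(X)f(X^{(e,\text{kept})})]+t_e\,\E[f(X)f(X^{(e,\text{resampled})})].\]
Hence $\partial_{t_e}\Phi$ equals the difference between the resampled and kept terms. Condition on all coordinates other than $e$, and write $g(a)=f\circ\sigma^a_e(X)$ and $h(a)=f\circ\sigma^a_e(X(s))$. The kept term is then $\E[g(X_e)h(X_e)]$ and the resampled term is $\E[g(X_e)]\E[h(X_e)]$, so $-\partial_{t_e}\Phi=\Cov(g(X_e),h(X_e))$ under $G_p$. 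The one-dimensional identity $\Cov(g(Y),h(Y))=\tfrac12\E[(g(Y)-g(Y^1))(h(Y)-h(Y^1))]$ does not literally match the stated form, which uses two independent copies $X^1_e$ and $X^2_e$. Expanding the stated form instead gives
\[\E[(g(Y)-g(Y^1))(h(Y)-h(Y^2))]=\E[gh]-\E g\,\E h-\E g\,\E h+\E g\,\E h=\Cov(g,h).\]
This matches exactly. Summing over $e$ via the chain rule along the diagonal $t_e=s$ gives $\phi'(s)$, and the finite case follows.

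Second, we pass to countable $E$ and $f\in L^2$. Approximate $f$ by $f_k:=\E[f\mid X_e,e\in E_k]$ for finite $E_k\uparrow E$. Since the noise acts coordinatewise, $f_k(X(s))=\E[f(X(s))\mid\cdot]$ in the natural way, so the covariances converge by $L^2$ martingale convergence. For the right-hand side we need convergence of $\sum_e$ of the co-influences. The plan is to write each summand, via the Fourier–Walsh/Efron–Stein decomposition in the orthonormal basis $\chi_S$ adapted to $G_p$, as $\sum_{S\ni e}(1-s)^{|S|-1}\hat f(S)^2$ up to a factor. Then $\int_t^1\sum_e(\cdot)\,ds=\sum_{S\neq\varnothing}\hat f(S)^2(1-t)^{|S|}$, which converges absolutely because $f\in L^2$. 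The same computation gives non-negativity and monotonicity for free: $\Cov(f(X),f(X(t)))=\sum_{S\ne\varnothing}(1-t)^{|S|}\hat f(S)^2$ is non-negative and non-increasing in $t$.

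The main obstacle is justifying the limit and the interchange of sum and integral in the infinite case when $L=\infty$ is allowed. For this I would rely entirely on the spectral expansion: it reduces everything to nonnegative series bounded by $\|f\|_2^2$, so monotone convergence applies.
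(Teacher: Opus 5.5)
Your proposal is correct. For finite $E$ it is essentially the paper's argument. The paper differentiates $\phi$ in $t_e$ via the event $\{U_e\in[t_e,t_e+\delta]\}$ and then subtracts the mean-zero term $\E[f\circ\sigma^{X^1_e}_e(X)\,\nabla^{X_e,X^2_e}_e f(X(t))]$. That is exactly your two-copy identity $\E[(g(Y)-g(Y^1))(h(Y)-h(Y^2))]=\Cov(g(Y),h(Y))$.

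The genuine difference lies in the countable case and in where positivity comes from. The paper applies the finite formula to the Doob martingale $g_m=\E[f\mid\mathcal{F}_m]$, shows that each co-influence converges, and invokes monotone convergence. It imports non-negativity of the co-influences, and hence positivity and monotonicity of the covariance, from \cite[Lemma 11]{ahlberg2023chaos}.

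Your Walsh expansion instead gives closed forms. The co-influence at $e$ is $\sum_{S\ni e}(1-s)^{|S|-1}\hat f(S)^2$ (the constant is exactly $1$ in an orthonormal basis), and the covariance is $\sum_{S\neq\varnothing}(1-t)^{|S|}\hat f(S)^2$. Tonelli then justifies the sum--integral interchange, and positivity and monotonicity follow immediately without any external input. Your route even supplies something the paper's monotone convergence step tacitly needs: the co-influences of $g_m$ are non-decreasing in $m$, since passing to $g_m$ only deletes the coefficients with $S\not\subset\{e_1,\dots,e_m\}$.

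The one point you should write out is that $f\mapsto f\circ\sigma^{X^1_e}_e(X)$ and $f\mapsto f(X(s))$ are $L^2$-isometries, because both arguments have law $G_p^{|E|}$. This is what lets you differentiate and pair the expansion term by term. Once you have these identities, your finite-case computation and the approximation by $f_k$ become redundant. Also, $L=\infty$ causes no difficulty at all, since the basis only involves $\I(x_e=\ell)$.

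What the paper's route buys in exchange is a Fourier-free, one-coordinate derivative formula (Lemma~\ref{Lemma: derivative of phi s}), which it reuses elsewhere, e.g.\ for monotonicity of $t\mapsto\E[|\tlpi(0,nx)\cap\tlpi^{t}(0,nx)|]$. Your identity $\E[h(X)h(X(t))]=\sum_S(1-t)^{|S|}\hat h(S)^2$ covers those uses as well.
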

We leave the proof of this lemmas in Appendix \ref{Appendix: DF for variance}. Now we give an explicit formula of the expectation in \eqref{Eq of lem: covariance}.
%%%%%%%%%%%%%%%%555
\begin{lemma}\label{Lem: Efx1x2=El1}
For any function $f \in L^2(G_p^{|E|})$, we have
    \begin{align*}
        \forall e \in E,\quad  \E\left[\nabla_e^{X_e,X^1_e} f(X) \nabla^{X_e,X^2_e}_e f(X(t))\right]  =  p(1-p)\E\left[\nabla_e^{L,\ell} f(X) \nabla^{L,\ell}_e f(X(t))\right].
    \end{align*}
    Moreover, the function $t\to \E\left[\nabla^{X_e,X_e^1}_e f(X) \nabla^{X_e,X_e^2}_e f(X(t))\right]$ is non-negative and non-increasing.
\end{lemma}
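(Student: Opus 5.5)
The plan is to condition on everything except the $e$-coordinates and reduce the identity to a finite computation over the joint law of $(X_e, X^1_e, X^2_e, X_e(t))$. Write $Y := (X_{e'})_{e'\neq e}$ and $Y(t) := (X_{e'}(t))_{e'\neq e}$. Given $(Y,Y(t))$, both $\nabla^{a,b}_e f(X)$ and $\nabla^{a,b}_e f(X(t))$ depend only on the chosen values $a,b$. Set
\begin{align*}
A &:= f(\sigma^L_e X)-f(\sigma^\ell_e X) = \nabla^{L,\ell}_e f(X),\\
B &:= \nabla^{L,\ell}_e f(X(t)).
\end{align*}
Both are measurable with respect to $(Y,Y(t))$, since $\sigma^a_e$ overwrites the $e$-coordinate. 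Then
\[
\nabla^{X_e,X^1_e}_e f(X) = A\bigl(\I(X_e=L)-\I(X^1_e=L)\bigr).
\]
A second discrete derivative is evaluated on the configuration $X(t)$ with its $e$-coordinate overwritten, so it uses $X_e$ rather than $X_e(t)$ in the first slot, exactly as in the statement. This gives
\[
\nabla^{X_e,X^2_e}_e f(X(t)) = B\bigl(\I(X_e=L)-\I(X^2_e=L)\bigr).
\]
The triple $(X_e,X^1_e,X^2_e)$ is independent of $(Y,Y(t))$, because $X^1,X^2$ are independent of everything and $X_e$ enters $(Y,Y(t))$ only through coordinates $e'\neq e$. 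So the expectation factorizes as $\E[AB]\cdot\E[(\I_0-\I_1)(\I_0-\I_2)]$, where $\I_i$ are i.i.d.\ Bernoulli$(1-p)$. The second factor is $\E[\I_0]-2(1-p)^2+(1-p)^2=(1-p)-(1-p)^2=p(1-p)$. This gives the first claim.

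For the monotonicity and nonnegativity, it remains to show that $t\mapsto \E[\nabla^{L,\ell}_e f(X)\,\nabla^{L,\ell}_e f(X(t))]$ is nonnegative and nonincreasing. Put $g := \nabla^{L,\ell}_e f$, a function of the coordinates other than $e$. Then $g(X(t))$ is a $t$-resampled copy of $g(X)$ on the product space over $E\setminus\{e\}$. I would use the Markov semigroup structure: resampling with probability $t$ is $T_t := \prod_{e'}(\,(1-t)I+t\,\E_{e'})$, and $\E[g(X)g(X(t))]=\E[g\,T_t g]$. Expanding $g$ in the Efron--Stein (orthogonal Hoeffding) decomposition $g=\sum_{S} g_S$, each $T_t$ acts on $g_S$ as multiplication by $(1-t)^{|S|}$. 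Hence
\[
\E[g(X)g(X(t))]=\sum_S (1-t)^{|S|}\E[g_S^2],
\]
which is visibly nonnegative and nonincreasing in $t$.

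The main obstacle is the case of infinitely many coordinates, and $L=\infty$, where $f$ may be infinite. For $L=\infty$, one needs the conventions under which $f\in L^2(G_p^{|E|})$ makes the derivatives finite; I would note that these derivatives are assumed integrable in $L^2$. For infinite $E$, I would justify the Efron--Stein expansion by approximating $g$ in $L^2$ with functions of finitely many coordinates, via martingale convergence, and pass to the limit using the $L^2$ continuity of $T_t$, which is a contraction. Alternatively, I would invoke Lemma~\ref{Lemma: covariacne formula} applied to $g$: its covariance $\Cov(g(X),g(X(t)))$ is nonnegative and nonincreasing, and adding $\E[g]^2$ preserves both properties.
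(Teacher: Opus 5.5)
Your proof is correct. The first identity is the same computation as the paper's, packaged more compactly. The paper splits into the cases $X^1_e\neq X^2_e$, which vanish because one of the two derivatives is zero, and $X^1_e=X^2_e$, which contribute with probabilities $p^2(1-p)$ and $p(1-p)^2$. Your indicator factorization $\nabla^{X_e,X^1_e}_e f(X)=A\,(\I(X_e=L)-\I(X^1_e=L))$, together with the independence of $(X_e,X^1_e,X^2_e)$ from $(A,B)$, reduces everything to $\E[(\I_0-\I_1)(\I_0-\I_2)]=p(1-p)$. It also makes explicit the independence the paper uses silently.

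The genuine difference is in the second claim. The paper gets the sign and the monotonicity by invoking Lemma \ref{Lemma: derivative of phi s} for $h=\nabla^{L,\ell}_e f$. That lemma rests on a one-coordinate derivative formula and on the monotonicity imported from \cite{ahlberg2023chaos}. Since it varies a single $t_{e'}$ at a time, the passage to the common parameter $t$, and the limiting argument when $E$ is infinite, are left implicit. Your Hoeffding expansion $\E[g(X)g(X(t))]=\sum_S(1-t)^{|S|}\E[g_S^2]$ gives nonnegativity and monotonicity at once from an explicit formula and treats all coordinates simultaneously. Your $L^2$ approximation (martingale convergence plus the contraction property of the resampling operator, with pointwise limits preserving both properties) settles the infinite case. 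The argument is self-contained and even yields extra information, such as convexity in $t$.

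The paper's route has the advantage of reusing machinery it needs anyway for Lemma \ref{Lemma: covariacne formula}. Your fallback of applying that lemma to $g$ and then adding $(\E g)^2$ is essentially the paper's argument. It is not circular, since that lemma's proof uses only Lemma \ref{Lemma: derivative of phi s}.
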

\begin{proof}
    By straight computation, we have
    \begin{align*}
        \E &\left[\nabla_e^{X_e,X^1_e} f(X) \nabla^{X_e,X^2_e}_e f(X(t))\right] \\
        & =  \E\left[\nabla_e^{X_e,X^1_e} f(X) \nabla^{X_e,X^2_e}_e f(X(t)) \I(X^1_e > X^2_e)\right]+ \E\left[\nabla_e^{X_e,X^1_e} f(X) \nabla^{X_e,X^2_e}_e f(X(t)) \I(X^1_e < X^2_e)\right]+ \\
        & \qquad \qquad \qquad \qquad \qquad \qquad \qquad \qquad \qquad + \E\left[\nabla_e^{X_e,X^1_e} f(X) \nabla^{X_e,X^2_e}_e f(X(t)) \I(X^1_e = X^2_e)\right] \\
        & = \E\left[\nabla_e^{X_e,L} f(X) \nabla^{X_e,\ell}_e f(X(t))  \I(X^1_e =\ell, X^2_e=L)\right] + \E\left[\nabla_e^{X_e,\ell} f(X) \nabla^{X_e,L}_e f(X(t))\I(X^1_e =L, X^2_e=\ell)\right] \\
        & \qquad \qquad \qquad \qquad \qquad \qquad \qquad \qquad \qquad + \E\left[\nabla_e^{X_e,X^1_e} f(X) \nabla^{X_e,X^2_e}_e f(X(t)) \I(X^1_e = X^2_e)\right]. 
    \end{align*}
Since $X_e$ takes only two values $\ell$ and $L$, we have the first and second terms are zero. For the third term, we rewrite
\begin{align*}
    & \E  \left[\nabla_e^{X_e,X^1_e} f(X) \nabla^{X_e,X^2_e}_e f(X(t)) \I(X^1_e = X^2_e)\right] \\
    & = 
    \E\left[\nabla_e^{X_e,\ell} f(X) \nabla^{X_e,\ell}_e f(X(t)) \I(X^1_e = X^2_e=\ell)\right]  + \E\left[\nabla_e^{X_e,L} f(X) \nabla^{X_e,L}_e f(X(t))\I(X^1_e = X^2_e=L)\right] \\
    & = \E\left[\nabla_e^{L,\ell} f(X) \nabla^{L,\ell}_e f(X(t)) \I(X^1_e = X^2_e = \ell,X_e= L)\right] + \E\left[\nabla_e^{L,\ell} f(X) \nabla^{L,\ell}_e f(X(t)) \I(X^1_e = X^2_e = L,X_e=\ell)\right] \\ 
    & = p(1-p)\E\left[\nabla_e^{L,\ell} f(X) \nabla^{L,\ell}_e f(X(t))\right].
\end{align*}
Notice that the function $t\to \E\left[\nabla_e^{L,\ell} f(X) \nabla^{L,\ell}_e f(X(t))\right]$ is non-increasing follows by applying Lemma \ref{Lemma: derivative of phi s} to $h = \nabla_e^{L,\ell}$.
The result follows.
\end{proof}
Combining Lemma \ref{Lemma: covariacne formula} with Lemma \ref{Lem: Efx1x2=El1} give us the dynamical formula of covariance as follows.
\begin{proposition} \label{covariance formula for f}
 For any function $f \in L^2(G_p^{|E|})$, we have for any $t \geq 0$,
    \begin{align*}
        \Cov(f(X),f(X(t)))= p(1-p)\int_t^1 \sum_{e \in E} \E\left[\nabla_e^{L,\ell} f(X) \nabla^{L,\ell}_e f(X(s))\right] ds.
    \end{align*}
\end{proposition}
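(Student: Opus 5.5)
This is immediate from Lemma \ref{Lemma: covariacne formula} and Lemma \ref{Lem: Efx1x2=El1}. The plan is to start from the representation \eqref{Eq of lem: covariance},
\begin{align*}
\Cov(f(X),f(X(t))) = \int_{t}^1 \sum_{e \in E} \E\left[\nabla^{X_e,X_e^1}_e f(X) \nabla^{X_e,X_e^2}_e f(X(s))\right] ds,
\end{align*}
which holds for every $f \in L^2(G_p^{|E|})$ and $t\in[0,1]$. Then, for each fixed $s\in[t,1]$ and each $e\in E$, substitute the identity of Lemma \ref{Lem: Efx1x2=El1} with $t$ replaced by $s$:
\begin{align*}
\E\left[\nabla_e^{X_e,X^1_e} f(X) \nabla^{X_e,X^2_e}_e f(X(s))\right] = p(1-p)\E\left[\nabla_e^{L,\ell} f(X) \nabla^{L,\ell}_e f(X(s))\right].
\end{align*}
Summing over $e$ and pulling the constant $p(1-p)$ out of the sum and the integral gives the claim.

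The only point needing care is exchanging the sum over a possibly countable $E$ with the identity termwise, and keeping the integral well defined. By Lemma \ref{Lem: Efx1x2=El1}, each summand is non-negative and non-increasing in $s$. So the integrand $s\mapsto\sum_e(\cdots)$ is a sum of non-negative measurable functions, and Tonelli's theorem justifies the manipulations. Finiteness of the whole expression is then inherited from \eqref{Eq of lem: covariance}, since $|\Cov(f(X),f(X(t)))|\le \Var f(X)<\infty$.

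For $t>1$ (the statement says $t\ge 0$), the resampling probability is capped at $1$, so $X(t)=X(1)$ is independent of $X$. Both sides then vanish, provided the integral is read as $\int_{\min(t,1)}^1$. No step is a genuine obstacle; the substantive content lies in Lemma \ref{Lemma: covariacne formula}, which is deferred to the Appendix.
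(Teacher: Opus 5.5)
Your proposal is correct and is exactly the paper's argument: the proposition is obtained by substituting the identity of Lemma \ref{Lem: Efx1x2=El1} (at time $s$) termwise into the representation of Lemma \ref{Lemma: covariacne formula} and pulling out the factor $p(1-p)$. Your extra remarks, on non-negativity of the summands via Tonelli and on reading the statement for $t\in[0,1]$ (where $X(t)$ is actually defined), are harmless and only make explicit what the paper leaves implicit.
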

Finally, this proposition implies directly the following dynamical formula of first truncated passage time.
 \begin{proposition}\label{Prop: variance dynamic for Dkm}
 We have for any $t \geq 0$,
    \begin{align}
        \Cov(\rmtlTm(0,nx),\rmtlTmt(0,nx))=  p(1-p)\int_t^1 \sum_{e \in \cE} \E\left[\influ_e(\rmtlTm,\rmtlT^s_M)\right] ds.
    \end{align}
\end{proposition}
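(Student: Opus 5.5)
The plan is to obtain Proposition \ref{Prop: variance dynamic for Dkm} as a direct specialization of Proposition \ref{covariance formula for f}. We take $\ell=1$, $L=M$, $E=\cE$, and let the i.i.d. family $X$ be the truncated weights $\tau$ with law $F_M=G_p$. The noised family $X(t)$ is then $\tau(t)$, which is built from the independent copy $\tau'$ and the uniforms $(U_e)$ exactly as in the abstract setting. The function is $f(\tau):=\rmtlT_M(0,nx)$, viewed as a function of the whole configuration. This includes the dependence through the regularized points $[0],[nx]$: these are determined by $\kC_\infty$, i.e.\ by which edges have weight $1$, and hence by $\tau$. With this choice $f(X(t))=\rmtlTmt(0,nx)$, and $\nabla^{L,\ell}_e f(X)\nabla^{L,\ell}_e f(X(s))=\nabla^{M,1}_e\rmtlT_M(0,nx)\,\nabla^{M,1}_e\rmtlT^s_M(0,nx)=\influ_e(\rmtlTm,\rmtlT^s_M)$. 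Substituting into Proposition \ref{covariance formula for f} gives the claimed identity.

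Before the substitution is legitimate we must check that $f\in L^2(G_p^{|\cE|})$. Since $\tau_e\le M$, we have $\rmtlT_M(0,nx)\le M(\|[0]\|_1+\|[nx]-nx\|_1+\|nx\|_1)$, by bounding with a straight $\Z^d$-path. Lemma \ref{Lem: hole} gives stretched-exponential tails for $\|z-[z]\|_\infty$, so all moments of $\rmtlT_M(0,nx)$ are finite.

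The real obstacle is that $\cE$ is infinite. The covariance formula in Lemma \ref{Lemma: covariacne formula} is presumably proved via the Margulis--Russo type computation of Lemma \ref{lemrusso}, which assumes $E$ finite. Note that $f$ is not a local function, since $[0]$ depends on the event of connecting to infinity.

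\textbf{Approximation.} I would approximate $f$ by functions depending only on edges in $\Lambda_K$, for instance $f_K:=\E[f\mid \tau_e,\,e\in\Lambda_K]$. Apply the finite formula to $f_K$. By martingale convergence $f_K\to f$ in $L^2$, and since $(\tau,\tau(t))$ is jointly i.i.d. over edges, $f_K(\tau(t))=\E[f(\tau(t))\mid\text{edges in }\Lambda_K]$ also converges in $L^2$. Hence the left-hand sides converge.

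\textbf{Passage to the limit on the right.} Here I would use that $\nabla_e f_K=\E[\nabla_e f\mid \Lambda_K]$ for $e\in\Lambda_K$. Nonnegativity of each summand (Lemma \ref{Lem: Efx1x2=El1}) then lets us apply monotone convergence, or Fatou combined with the upper bound $\sum_e\E[(\nabla_e f)^2]\lesssim \Var$-type Efron--Stein finiteness, to pass $K\to\infty$ inside the sum and the integral.

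Alternatively, if Lemma \ref{Lemma: covariacne formula} is already proved for countable $E$ with $f\in L^2$, as its statement ("at most countable index set") suggests, then the proposition is immediate from Proposition \ref{covariance formula for f} after the identification above, and only the $L^2$ check is needed.
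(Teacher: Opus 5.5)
Your proposal is correct and follows the paper's route. The paper reads the proposition off from Proposition \ref{covariance formula for f} with $\ell=1$, $L=M$ and $f=\rmtlT_M(0,nx)$, and the countable index set is already handled inside Lemma \ref{Lemma: covariacne formula} by the same conditional-expectation (martingale) approximation you sketch; your explicit $L^2$ check is a useful addition that the paper leaves implicit.

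One correction to your optional limiting argument. Efron--Stein bounds the variance by the total influence, not the reverse, so it cannot supply the domination you invoke. Nonnegativity of the summands alone does not license monotone convergence either. The interchange should instead be justified by one of two facts. The first is domination: $\E[(\nabla_e f_K)^2]\le \E[(\nabla_e f)^2]$ by conditional Jensen, together with $\sum_{e}\E[(\nabla^{M,1}_e \rmtlT_M(0,nx))^2]<\infty$. The second is that the truncated co-influences are nondecreasing in $K$, which is visible in the Hoeffding decomposition.
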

%%%%%%%%%%%%%%%%%%%%%%%%%%%
\subsection{Proof of Theorem \ref{Theorem: a dynamic formula for Dkm}} 
In view of Proposition \ref{Prop: variance dynamic for Dkm}, Theorem \ref{Theorem: a dynamic formula for Dkm} will follow directly from the following lower and upper bounds on the total co-influence.
\begin{proposition}\label{Prop: bound for influence} There exists a constant $C > 0$ such that for all $t\geq 0$,
\begin{align}\label{Eq: lower bound}
\E\left[|\tlpim(0,nx) \cap \tlpim^{t}(0,nx)|\right] - C\log^{d+8} n \leq \sum_{e \in \cE} \E\left[\influ_e(\rmtlTm,\rmtlT^t_M)\right],
\end{align}
and for all $\alpha >0$,
\begin{align}\label{Eq: upper bound}
    \sum_{e \in \cE} \E\left[\influ_e(\rmtlTm,\rmtlT^t_M)\right]
    \leq C \alpha^2  \E\left[|\tlpim(0,nx) \cap \tlpim^{t}(0,nx)|\right] + Ce^{-\frac{\alpha}{C}} n + C (\log n)^{2d+6}.
\end{align}
Moreover, the function $\E\left[|\tlpim(0,nx) \cap \tlpim^{t}(0,nx)|\right]$ is non-increasing in $t$.
\end{proposition}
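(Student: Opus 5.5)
We prove the two bounds and the monotonicity by working edge by edge: on the good event $\cQ_e^0\cap\cQ_e^t$ we use Proposition~\ref{Prop: bound influence by re}, and on its complement we show the contribution is only polylogarithmic. Fix $M=\lfloor(\log n)^2\rfloor$ and let $B:=\Lambda_{C\log^2 n}(0)\cup\Lambda_{C\log^2 n}(nx)$ be a polylogarithmic neighbourhood of the endpoints.

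\emph{Monotonicity.} Write $\E[|\tlpim(0,nx)\cap\tlpimt(0,nx)|]=\sum_e \E[\I(e\in\tlpim(0,nx))\,\I(e\in\tlpimt(0,nx))]$. Each summand is non-increasing in $t$ by Lemma~\ref{Lemma: derivative of phi s} applied to $f=\I(e\in\tlpim(0,nx))$, exactly as in the proof of Theorem~\ref{Theorem: variance and chaos}.

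\emph{Lower bound.} By \eqref{EQ: lower bound delta}, on $\cQ_e^0\cap\cQ_e^t$ we have $\I(e\in \tlpim\cap\tlpimt)\le \influ_e$. We then need two facts about the influence off this event.
\begin{itemize}
\item[(a)] $\influ_e\ge 0$ everywhere, or at least its negative part is controllable. Changing $\tau_e$ can move the regularized points only if $e$ touches the finite hole around $0$ or $nx$.
\item[(b)] $\cQ_e^0\cap\cQ_e^t$ fails only if $e$ lies in $\Lambda_{\|[0]_s\|}(0)$ or $\Lambda_{\|[nx]_s-nx\|}(nx)$ for some $s\in\{0,t\}$, up to the neighbourhood of a finite cluster. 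By Lemmas~\ref{Lem: hole} and \ref{Lemma: 0 not to infty}, this has stretched-exponential tail in the distance from $e$ to the endpoints.
\end{itemize}
Hence
\[\sum_e\E[\I(e\in\tlpim\cap\tlpimt)\I((\cQ_e^0\cap\cQ_e^t)^c)]\le \sum_{e\in B}1+\sum_{e\notin B}(\text{decaying probability})\le C(\log n)^{2d}.\]
For the negative part of $\influ_e$ we use the crude bound $|\nabla^{M,1}_e\rmtlTms|\le M\cdot(\text{local chemical distance change})$. Here $\rmtlTms$ shifts by at most $M$ times the distance between the old and new regularized points, which is at most $C\log^{2} n$ except with tiny probability. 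This gives $|\influ_e|\le CM^2\log^4 n\le C(\log n)^{8}$. Summing over $e\in B$ yields the error term $C(\log n)^{d+8}$; contributions from $e\notin B$ are $o(1)$.

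\emph{Upper bound.} For $e$ with $\tau_e=\tau_e(t)=1$ on the good event, Proposition~\ref{Prop: bound influence by re}(ii) applies. If $\tau_e$ or $\tau_e(t)$ equals $M$, then $\nabla^{M,1}_e$ evaluated at that configuration is unaffected by that coordinate's value. We handle this by noting that $\influ_e$ does not depend on $\tau_e,\tau_e(t)$ themselves, so we may condition on $\tau_e=\tau_e(t)=1$ using independence of $U_e,\tau_e,\tau'_e$ from $\influ_e$. Concretely, $\E[\influ_e]=\E[\influ_e\mid \tau_e=\tau_e(t)=1]$. We then bound this by $p^{-1}\E[\influ_e\I(\tau_e=\tau_e(t)=1)]$, using $\pr(\tau_e=\tau_e(t)=1)\ge p^2$ and dividing by that probability. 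This reduces everything to \eqref{Eq: Proposition co-impact}.

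The $3M^2\I(\cU^0_e\cup\cU^t_e)$ term is supported near the endpoints, within distance $3\re$. Proposition~\ref{Propo: good properties of re}(ii) and Lemma~\ref{Lem: hole} bound its total by $CM^2(\log n)^{2d}\lesssim(\log n)^{2d+6}$, after handling $\re\ge M$ (probability $e^{-cM}$) separately. The bad event $(\cQ_e^0\cap\cQ_e^t)^c$ is handled as in the lower bound, using $|\influ_e|\le M^2$. The main term is $\E[\sum_{e\in\tlpim\cap\tlpimt}(\hre)^2]$, which we split according to whether $\hre\le\alpha$ or not:
\[\E\Bigl[\sum_{e\in\tlpim\cap\tlpimt}(\hre)^2\Bigr]\le \alpha^2\E[|\tlpim\cap\tlpimt|]+\E\Bigl[\sum_{e\in\tlpim}(\hre)^2\I(\hre>\alpha)\Bigr].\]
The second term is where the main obstacle lies, since $\tlpim$ is a random path correlated with the radii. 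We apply Lemma~\ref{Lem: lattice animal} with $f(x)=x^2\I(x>\alpha)$ and $L=Cn$, using the path $\tlpim$ extended from $0$ to $[0]$. The relevant constant $B_\alpha=\sum_N f_*(N)^2N^{d+1}q_N^{1/d}\le Ce^{-\alpha/C}$ thanks to the exponential tail of $\re$. To obtain a linear rather than quadratic bound in $n$, we track the constant's dependence, using $\E[X]\le\E[X^2]^{1/2}$ and Lemma~\ref{lemmaxbound} directly. This yields $\E[\Gamma]\le C nN^dq_N^{1/d}$ summed over $N>\alpha$, i.e.\ $Ce^{-\alpha/C}n$. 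The tail $\pr(|\tlpim|\ge Cn)$ is controlled by Lemma~\ref{Lem: large deviation of graph distance Dlambda}.
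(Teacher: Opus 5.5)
Your architecture is the paper's. You use Proposition \ref{Prop: bound influence by re}(i) on $\cQ_e^0\cap\cQ_e^t$ for the lower bound, then the independence reduction to $\{\tau_e=\tau_e(t)=1\}$ and Proposition \ref{Prop: bound influence by re}(ii). The main term is split at $\hre\le\alpha$, handled by lattice animals, and monotonicity comes from Lemma \ref{Lemma: derivative of phi s}. Three small points:
\begin{itemize}
\item The reduction gives the factor $p^{-2}$, not $p^{-1}$, and it needs $\E[\influ_e]\ge 0$ (Lemma \ref{Lem: Efx1x2=El1}) to turn the identity into an inequality.
\item Tracking how $C(A,B)$ depends on $B$ in Lemma \ref{Lem: lattice animal} is not available as that lemma is stated. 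The route that works is the direct one through Lemma \ref{lemmaxbound} with $I_{e,N}=\I(N-1\le\hre<N)$ for $N>\alpha$, plus Cauchy--Schwarz with Corollary \ref{corre} on long paths.
\item Your treatment of the $3M^2\I(\cU^0_e\cup\cU^t_e)$ term is fine. The paper instead notes that $\hre=M$ when $\re\ge M$ and absorbs that case into $(\hre)^2$.
\end{itemize}

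The step that fails is the bad-event bookkeeping, which is exactly what produces the stated error terms.
\begin{itemize}
\item Your $B$ lives at scale $\log^2 n$, so $|B|\asymp(\log n)^{2d}$, as you use yourself in (b). Summing your pointwise bound $|\influ_e|\le C(\log n)^8$ over $B$ gives $C(\log n)^{2d+8}$, not $C(\log n)^{d+8}$. This exceeds both the lower-bound error $(\log n)^{d+8}$ and the upper-bound error $(\log n)^{2d+6}$.
\item Likewise, $\sum_{e\in B}1\asymp(\log n)^{2d}$ is already too large when $d\ge 9$. The paper avoids this by using $\sum_e\pr(\{\cQ^0_e\cap\cQ^t_e\}^c)=O(1)$, which follows from \eqref{Eq: probability of Q^c}.
\item In the upper bound, the claim $|\influ_e|\le M^2$ on $\{\cQ^0_e\cap\cQ^t_e\}^c$ is false. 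On that event, closing $e$ moves a regularized point, so $\nabla^{M,1}_e\rmtlTm$ can be of order $M$ times the displacement.
\end{itemize}

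There are two ways to repair this.
\begin{itemize}
\item Work at scale $C\log n$, which \eqref{Eq: probability of Q^c} permits, controlling far edges by Cauchy--Schwarz against a polynomial moment bound.
\item Follow Lemma \ref{Lemma: total sum of influence of D and T}. On $\{\cQ^0_e\}^c\cap\cQ^t_e$ the time-$t$ factor lies in $[0,M]$. The doubly-bad case is handled by Cauchy--Schwarz and the equality in law of $\tau(t)$ and $\tau$. Together these give the bound $4M\,\E[\kappa_e^2\I(\{\cQ^0_e\}^c)]$, where $\kappa_e=\rmT_M([0]^e,[nx]^e)-\rmT_M([0],[nx])$. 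Edges with $\min(\|e\|_\infty,\|e-nx\|_\infty)\ge 10\log n/c$ then contribute an exponentially small amount, via $\E[\kappa_e^4]=O(n^4)$. The remaining $O((\log n)^d)$ edges contribute $O(M^2\log^2 n)$ each. The total is $O(M(\log n)^{d+6})=O((\log n)^{d+8})$, which also fits inside $(\log n)^{2d+6}$ for $d\ge 2$.
\end{itemize}
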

The rest of this subsection is devoted to the proof of Proposition \ref{Prop: bound for influence}. We begin with a lemma that provides an upper bound for the total co-influence of the first truncated passage time with respect to the edges that modify the regularized points.
\begin{lemma}\label{Lemma: total sum of influence of D and T}
    \begin{align}\label{Eq of Lemma: bound influecen}
         \sum_{e \in \cE}  \E\left[ |\influ_e(\rmtlTm,\rmtlT^t_M)|\I(\{\cQ^0_e \cap \cQ^t_e\}^c)\right]= \cO(\log^{d+8} n).
    \end{align}
\end{lemma}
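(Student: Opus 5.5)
Since $|\influ_e(\rmtlTm,\rmtlT^t_M)| \le |\nabla^{M,1}_e \rmtlT_M(0,nx)|\cdot|\nabla^{M,1}_e \rmtlTmt(0,nx)|$, the plan is to bound each discrete derivative crudely and then sum over the few edges that can break $\cQ^0_e\cap\cQ^t_e$.

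\textbf{Step 1: crude bound on one derivative.} Changing $\tau_e(s)$ between $1$ and $M$ can change the regularized endpoints only if $e$ affects the infinite cluster near $0$ or $nx$. Write
\[
\rmtlT^s_M(0,nx)=\rmT^s_M([0]_s,[nx]_s).
\]
Moving an endpoint from $[0]_s$ to $[0]^e_s$ costs at most $M\|[0]_s-[0]^e_s\|_1$, because any lattice path has weight at most $M$ per edge. Flipping $e$ with fixed endpoints changes the passage time by at most $M$. Hence
\[
|\nabla^{M,1}_e \rmtlT^s_M(0,nx)| \le M\bigl(1+ d(\|[0]_s-[0]^e_s\|_\infty+\|[nx]_s-[nx]^e_s\|_\infty)\bigr).
\]
By Lemma \ref{Lem: hole}, together with the bound $\|z-[z]^e_s\|_\infty\le \|z-[z]_s\|_\infty+\text{(hole size after closing $e$)}$ controlled by the same estimate, these distances have stretched-exponential tails. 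So on a high-probability event they are at most $(\log n)^{2}$, and the derivative is at most $CM(\log n)^2$.

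\textbf{Step 2: which edges can violate $\cQ^s_e$.} If $[0]_s\neq[0]_s^e$, then closing $e$ removes $[0]_s$ from the infinite cluster or changes the nearest cluster point. Either $e$ is within distance $\|[0]_s\|_\infty+1$ of $0$, or $e$ is a pivotal edge joining $[0]_s$ to infinity. In the second case, after closing $e$ the point $[0]_s$ lies in a finite cluster whose diameter is at least $\|e-[0]_s\|_\infty$. By Lemma \ref{Lemma: 0 not to infty}, applied at $[0]_s$ after a union bound over its possible positions in $\Lambda_{(\log n)^2}$, the probability of such an $e$ at distance $r$ decays like $e^{-cr}$. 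Thus, except on an event of probability $O(n^{-10})$, every edge violating $\cQ^0_e\cap\cQ^t_e$ lies in $\Lambda_{C(\log n)^2}(0)\cup\Lambda_{C(\log n)^2}(nx)$, which contains $O((\log n)^{2d})$ edges.

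\textbf{Step 3: summing.} On the good event the total is at most
\[
C(\log n)^{2d}\,\bigl(CM(\log n)^2\bigr)^2 = O\bigl((\log n)^{2d+8}\bigr).
\]
This overshoots the target exponent $d+8$. To reach $d+8$, I would instead choose the window radius $(\log n)^{1+o(1)}$ and use the $\ell^{d-1}$ exponent in Lemma \ref{Lem: hole}, keeping the product of the two derivative bounds at order $M^2$. That leaves the edge count, which I would handle by estimating $\sum_e \pr(\{\cQ^0_e\cap\cQ^t_e\}^c)$ directly as $O((\log n)^{d})$ from the exponential tails, rather than counting the whole box.

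On the bad event, I would use Cauchy--Schwarz: $|\nabla_e|\le M\cdot\mathrm{poly}(n,\text{hole sizes})$, and the relevant edges are restricted to $\Lambda_{Cn}$ up to exponentially small probability. Each bad-event term is then at most $\mathrm{poly}(n)\cdot n^{-10}=o(1)$.

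\textbf{Main obstacle.} The delicate point is obtaining exactly the exponent $d+8$. This requires the sharp estimate
\[
\sum_e \E\bigl[|\nabla^0_e|\,|\nabla^t_e|\,\I(\{\cQ^0_e\cap\cQ^t_e\}^c)\bigr]\lesssim M^2\sum_e \pr\bigl(\{\cQ^0_e\cap\cQ^t_e\}^c\bigr)^{1/2}\times(\text{moments}),
\]
together with a careful tail bound for $\pr(\cQ^s_e \text{ fails})$ in terms of $\|e\|_\infty$ and $\|e-nx\|_\infty$. I expect the proof of that tail bound, via Lemmas \ref{Lem: hole} and \ref{Lemma: 0 not to infty}, to be the main work.
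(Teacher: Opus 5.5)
\textbf{Verdict.} Your proposal is correct once you commit to the H\"older/Cauchy--Schwarz estimate in your last paragraph. That estimate is a genuinely different, and sharper, route than the paper's.

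\textbf{The paper's route.} It uses that on $\cQ^t_e$ the other derivative lies in $[0,M]$. Together with Cauchy--Schwarz when both events fail, and integrality ($|\nabla|\le|\nabla|^2$), this reduces everything to $4M\sum_e\E[\kappa_e^2\I(\{\cQ^0_e\}^c)]$, with $\kappa_e=\rmT_M([0]^e,[nx]^e)-\rmT_M([0],[nx])$. It then splits edges at distance $10(\log n)/c$ from $\{0,nx\}$. Far edges are handled by Cauchy--Schwarz, with the crude global moment $\E[\kappa_e^4]=\cO(n^4)$ against $\pr(\{\cQ^0_e\}^c)\le c^{-1}e^{-c\min(\|e\|_\infty,\|e-nx\|_\infty)}$. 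Each of the $\cO((\log n)^d)$ near edges is bounded by $\cO(M^2(\log n)^2)$ via the triangle inequality. The prefactor $M$ then turns $(\log n)^{d+6}$ into $(\log n)^{d+8}$.

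\textbf{Your route.} Your bound is $|\nabla^{M,1}_e\rmtlT^s_M(0,nx)|\le M(1+dH^s_e)$, where $H^s_e$ should be the displacement of the regularized endpoints between $\sigma^1_e$ and $\sigma^M_e$. This is not quite $[0]_s$ versus $[0]^e_s$ when $e$ is $s$-closed. The bound is local. The flipped configurations have densities at most $1/p$ and $1/(1-p)$ with respect to the original law, so Lemma \ref{Lem: hole} gives $\E[(H^s_e)^4]=\cO(1)$ uniformly in $e$ and $n$. H\"older then bounds each edge's term by $CM^2\pr(\{\cQ^0_e\cap\cQ^t_e\}^c)^{1/2}\le CM^2(e^{-c\|e\|_\infty/2}+e^{-c\|e-nx\|_\infty/2})$. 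Summed over $e\in\cE$, this is $\cO(M^2)=\cO((\log n)^4)$, with no near/far split.

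\textbf{The exponent worry.} The ``delicate exponent'' you flag is a non-issue. The lemma is only an upper bound, and the sum of square roots is $\cO(1)$, not $\cO((\log n)^d)$. Your Steps 1--3, with the high-probability window that gave $(\log n)^{2d+8}$, can simply be discarded.

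\textbf{What each route buys.} The paper's route needs only crude global bounds on $\rmT_M$. Yours needs the easy change of measure for the flipped configurations. In exchange, it treats opening and closing $e$ symmetrically. It also does not rely on $0\le\nabla^{M,1}_e\rmtlT^t_M(0,nx)\le M$ on $\cQ^t_e$, which is not automatic when $e$ is $t$-closed, since opening $e$ can still move $[0]_t$.
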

\begin{proof} We observe that
\begin{align}
    \E  \left[ |\influ_e(\rmtlTm,\rmtlTmt)| \I(\{\cQ^0_e \cap \cQ^t_e\}^c)\right] & \leq \E\left[ |\influ_e(\rmtlTm,\rmtlTmt)| \I(\{\cQ^0_e\}^c \cap \cQ^t_e )\right]  + \E\left[ |\influ_e(\rmtlTm,\rmtlTmt)|  \I(\cQ^0_e \cap \{\cQ^t_e\}^c )\right]  \notag \\
   & \qquad \qquad \qquad +2 \E\left[ |\influ_e(\rmtlTm,\rmtlTmt)|  \I(\{\cQ^0_e\}^c \cap \{\cQ^t_e\}^c )\right]. 
\end{align}
By \eqref{Eq: delta1} and \eqref{Eq: Bound for delta M,1}, we have
\begin{align*}
    \E\left[ |\influ_e(\rmtlTm,\rmtlTmt)| \I(\{\cQ^0_e\}^c \cap \cQ^t_e )\right] \leq M \E\left[ |\nabla^{M,1}_e \rmtlTm| \I(\{\cQ^0_e\}^c)\right],
\end{align*}
and 
\begin{align*}
 \E\left[ |\influ_e(\rmtlTm,\rmtlTmt)|  \I(\cQ^0_e \cap \{\cQ^t_e\}^c) \right] & \leq M\E\left[ |\nabla^{M,1}_e \rmtlTmt|\I(\{\cQ^t_e\}^c)\right] = M \E\left[ |\nabla^{M,1}_e \rmtlTm| \I(\{\cQ^0_e\}^c)\right].
\end{align*}
Moreover, thanks to Cauchy-Schwarz inequality, we have
\begin{align*}
    \E  \left[ |\influ_e(\rmtlTm,\rmtlTmt)| \I(\{\cQ^0_e\}^c \cap \{\cQ^t_e\}^c )\right]  &\leq \left(\E\left[ |\nabla^{M,1}_e \rmtlTm|^2  \I(\{\cQ^0_e\}^c)\right] \E\left[ |\nabla^{M,1}_e \rmtlTmt|^2 \I(\{\cQ^t_e\}^c)\right]\right)^{1/2} \\
    & = \E\left[ |\nabla^{M,1}_e \rmtlTm|^2\I(\{\cQ^0_e\}^c)\right].
\end{align*}
Notice that if $\tau _e = M$ then closing the edge $e$ does not affect the regularized points, and thus
$$\I(\{\cQ^0_e\}^c) = \I(\{\cQ^0_e\}^c) \I(\tau_e =1) + \I(\{\cQ^0_e\}^c) \I(\tau_e =M) = \I(\{\cQ^0_e\}^c) \I(\tau_e =1).$$
Putting this together with the above bounds, we get
\begin{align} \label{Eq: bounds by QeC}
    \E  \left[ |\influ_e(\rmtlTm,\rmtlTmt)| \I(\{\cQ^0_e \cap \cQ^t_e\}^c)\right] \leq 4 M \E\left[ |\nabla^{M,1}_e \rmtlTm|^2 \I(\tau_e = 1)\I(\{\cQ^0_e\}^c)\right].
\end{align}
To proceed the right hand side, we define 
\begin{align*}
    \kappa_e:=\rmT_{M} ([0]^e,[nx]^e) -\rmT_{M}([0],[nx]).
\end{align*}
Since $ \nabla^{M,1}_e \rmtlTm \I(\tau_e =1) =  \kappa_e \I(\tau_e =1)$, we have
\begin{align} \label{Eq: bounds by kapae}
    \E\left[ |\nabla^{M,1}_e \rmtlTm|^2  \I(\tau_e =1) \I(\{\cQ^0_e\}^c)\right] \leq  \E[ (\kappa_e)^2 \I(\{\cQ^0_e\}^c)].
\end{align} 
By independence property,
\begin{align*}
    \E[(\kappa_e)^4] \leq 8 (\E[(\rmT_{M} ([0]^e,[nx]^e))^4] + \E[(\rmT_{M}([0],[nx]))^4])  & \leq  8 (\E[(\rmD ([0]^e,[nx]^e))^4] +8 \E[(\rmD([0],[nx]))^4]) \\
   & \leq  \frac{8}{1-p} \E[(\rmD ([0]^e,[nx]^e))^4\I(\omega_e =\infty)] + 8\E[(\rmD([0],[nx]))^4] \\
    &  \leq 8\frac{2-p}{1-p}n^4.
\end{align*}
If $\{[0] \neq [0]^e\}$ (resp. $\{[nx] \neq [nx]^e\}$), then $[0]$ (resp. $[nx]$)  is in a finite open cluster of $x_e$ or $y_e$ in $\cG_p\setminus e$, which we denote by $\kC([0],e)$ (resp. $\kC([nx],e)$).
By Lemma \ref{Lemma: 0 not to infty},
\begin{align} \label{Eq: probability of Q^c}
    \pr(\{\cQ^{0}_e\}^c) & \leq \pr([0] \neq [0]^e) + \pr([nx] \neq [nx]^e) \notag \\
    & \leq \pr([0] \not \in \Lambda_{\|e\|_{\infty}/2}) + \pr(\infty>|\kC([0],e)| \geq \|e\|_{\infty}/2) \notag \\
    & \qquad \qquad \qquad \qquad + \pr([nx] \not \in \Lambda_{\|e-nx\|_{\infty}/2}(nx)) + \pr(\infty > |\kC([nx],e)| \geq \|e-nx\|_{\infty}/2) \notag \\
    & \leq c^{-1} \exp(-c\min(\|e\|_\infty,\|e-nx\|_{\infty}),
\end{align}
for some small constant $c>0$. If $\min(\|e\|_\infty,\|e-nx\|_{\infty}) \geq 10 (\log n)/c$, then for $n$ large enough
\begin{align}\label{Eq: >=Clog n}
     \E[ (\kappa_e)^2 \I(\{\cQ^0_e\}^c)] \leq \exp(-c\min(\|e\|_\infty,\|e-nx\|_{\infty})/2).
\end{align}
Assume that $\min(\|e\|_\infty,\|e-nx\|_{\infty}) \leq 10 (\log n)/c$. We suppose further that $\|e-nx\|_{\infty} \leq 10 (\log n)/c$.  The remain
case $\|e\|_\infty \leq 10 (\log n)/c$ can be treated similarly. Thanks to triangle inequality,
\begin{align*}
  \rmT_{M} ([0],x_e) - \rmT_{M} (x_e,[nx]^e) \leq  \rmT_{M} ([0],[nx]^e)) \leq \rmT_{M} ([0],x_e) + \rmT_{M} (x_e,[nx]^e).
\end{align*}
Note that $\rmT_{M}([0],[nx]) =  \rmT_{M} ([0],x_e) + \rmT_{M} (x_e,[nx])$. Hence, we have
\begin{align*}
    \E[ (\kappa_e)^2 \I([0] = [0]^e, [nx] \neq [nx]^e)] 
    & \leq \E[(\rmT_{M} (x_e,[nx]^e)+ \rmT_{M} (x_e,[nx]))^2 ] \\
    & \leq 2\frac{1}{1-p}\E[(\rmT_{M}(x_e,[nx]))^2\I(\tau_e =M)]+ 2(\E[(\rmT_{M} (x_e,[nx]))^2] \\
    & \leq \frac{2(2-p)}{1-p} \E[(\rmT_{M} (x_e,[nx]))^2] \\
    & \leq \frac{4(2-p)}{1-p} (\E[(\rmT_{M} (x_e,nx))^2] + \E[(\rmT_{M} (nx,[nx]))^2]) = \cO((\log n)^6),
\end{align*}
which implies that if $\|e -nx\|_\infty \leq 10 (\log n)/c$ then by \eqref{Eq: probability of Q^c},
\begin{align*}
    \E[ (\kappa_e)^2 \I(\{\cQ^0_e\}^c)] & \leq  \E[ (\kappa_e)^2 \I([0] \neq [0]^e)] +  \E[ (\kappa_e)^2 \I([0] = [0]^e, [nx] \neq [nx]^e)] =  \cO((\log n)^6).
\end{align*}
Combining this with \eqref{Eq: >=Clog n} yields that
\begin{align*}
    \sum_{e \in \cE} \E[ (\kappa_e)^2 \I(\{\cQ^0_e\}^c)]
    & \leq \sum_{e: \min(\|e\|_\infty,\|e-nx\|_{\infty}) \geq 10 (\log n)/c} \exp(-c\min(\|e\|_\infty,\|e-nx\|_{\infty})/2) \\
    & \qquad \qquad \qquad + \sum_{e: \min(\|e\|_\infty,\|e-nx\|_{\infty}) \leq 10 (\log n)/c } \cO((\log n)^6) \\
    & = \cO((\log n)^{d+6}).
\end{align*}
Putting this estimate together \eqref{Eq: bounds by kapae} and \eqref{Eq: bounds by QeC}, we obtain the desired result.
\end{proof}
\begin{proof}[Proof of Proposition \ref{Prop: bound for influence}]
For convenience, we fix $t \geq 0$ and set $\eta^{0,t} :=\tlpi_M(0,nx) \cap \tlpi^t_M(0,nx) $ the random subset of edges.
%%%%%%%%%%%%%%%%%%%%%%%%%%%%%%
For lower bound \eqref{Eq: lower bound}, using Proposition \ref{Prop: bound influence by re} (i), we get
\begin{align*}
  \sum_{e \in \cE}  \E\left[\influ_e(\rmtlTm,\rmtlT^t_M)\I(\cQ^{0}_e \cap \cQ^{t}_e )\right] & \geq  \sum_{e \in \cE}  \E\left[\I(e \in \eta^{0,t})\I(\cQ^{0}_e \cap \cQ^{t}_e )\right]  \\
  & = \sum_{e \in \cE} \E[\I(e \in \eta^{0,t})(1- \I(\{\cQ^{0}_e \cap \cQ^{t}_e)\}^c)]  \\
  & \geq \E[|\eta^{0,t}|]- \sum_{e \in \cE}   \pr( \{\cQ^{0}_e\}^c \cup \{\cQ^{t}_e\}^c).
\end{align*}
It follows from \eqref{Eq: probability of Q^c} that there exists a constant $c>0$ such that for all $s \in \{0,t\}$,
\begin{align*}
    \pr(\{\cQ^{s}_e\}^c) 
     \leq c^{-1} \exp(-c\min(\|e\|_\infty,\|e-nx\|_{\infty}).
\end{align*}
Combining these estimates with Lemma \ref{Lemma: total sum of influence of D and T} yields that
\begin{align*}
    \sum_{e \in \cE}  \E\left[\influ_e(\rmtlTm,\rmtlT^t_M)\right] & \geq  \E[|\eta^{0,t}|]- \sum_{e \in \eta^{0,t}} \pr( \{\cQ^{0}_e\}^c \cup \{\cQ^{t}_e\}^c)- \sum_{e \in \cE}  \E\left[|\influ_e(\rmtlTm,\rmtlT^t_M)|\I(\{\cQ^{0}_e \cap \cQ^{t}_e \}^c)\right] \\
    & \geq \E[|\eta^{0,t}|] - \cO(\log^{d+8} n).
\end{align*}
For upper bound \eqref{Eq: upper bound}, we first write from the independence property that 
\begin{align} \label{Eq: devide influence}
   \sum_{e \in \cE}  & \E\left[\influ_e(\rmtlTm,\rmtlT^t_M) \right] \leq \frac{1}{p^2}\sum_{e \in \cE} \E\left[\influ_e(\rmtlTm,\rmtlT^t_M)\I(\tau_e =\tau_e(t)=1)\right] \notag \\
   &  \leq  \frac{1}{p^2}\left( \sum_{e \in \cE} \E\left[\influ_e(\rmtlTm,\rmtlT^t_M)\I(\tau_e =\tau_e(t)=1) \I(\cQ^{0}_e \cap \cQ^{t}_e)\right] +\sum_{e \in \cE} \E\left[|\influ_e(\rmtlTm,\rmtlT^t_M)|\I(\{\cQ_e^0 \cap \cQ_e^t\}^c)\right] \right).
\end{align}
From Proposition \ref{Prop: bound influence by re} (ii), it follows that for all $e \in \cE$,
\begin{align} \label{Eq: co-impact}
 0  \leq  \E & \left[\influ_e(\rmtlTm,\rmtlT^t_M) \I(\tau_e =\tau_e(t)=1) \I(\cQ^{0}_e \cap \cQ^{t}_e)\right]   \leq \E\left[ (3\I(\cU^{0}_e \cup \cU^{t}_e ) M^2+ (\hre)^2)\I(e \in \eta^{0,t})\right] \notag \\
 & \qquad \qquad \qquad \leq 3M^2\E\left[\sum_{e \in \eta^{0,t}}\I(\cU^{0}_e)\right]   +  3M^2\E\left[\sum_{e \in \eta^{0,t}}\I(\cU^{t}_e)\right]+ \E\left[\sum_{e \in \eta^{0,t}} (\hre)^2\right].
\end{align}
For $s \in \{0,t\}$, set $r^s_{e}: = \min(\|e- [0]_{s}\|_{\infty},
\|e-[nx]_{s}\|_\infty)$. If $r^s_{e} \geq 3M$, then $\re \geq M$ and so $\hre = M$. Therefore, we have
\begin{align}\label{Eq: bound of double gradient}
   \sum_{e \in \cE} & \E \left[\influ_e(\rmtlT,\rmtlT^t_M) \I(\tau_e =\tau_e(t)=1) \I(\cQ^{0}_e \cap \cQ^{t}_e)\right]   \notag \\
     & \leq  3M^2\E\left[\sum_{e \in \eta^{0,t}}\I(r^0_e \leq 3M)\right]   +  3M^2\E\left[\sum_{e \in \eta^{0,t}}\I(r^t_e \leq 3M)\right]+  7 \E\left[\sum_{e \in \eta^{0,t}} (\hre)^2\right].
\end{align}
It is clear that for all $ s \in \{0,t\}$,
\begin{align}\label{Eq: sum of e in eta0t of I(re<3M}
\sum_{e \in \eta^{0,t}}\I(r_e^s \leq 3M)= |e \in \eta^{0,t}: r_e^s \leq 3M| & \leq   |\{e \in \eta^{0,t}: \|[0]_{s}-e\|_\infty \leq 3M\}| + |\{e \in \eta^{0,t}: \|[nx]_{s}-e\|_\infty \leq 3M\}| \notag \\
& = \cO(M^d).
\end{align}
Thus, we remain to control the third term of the right hand side of \eqref{Eq: bound of double gradient}. Observe that for any $\alpha> 0$,
\begin{align} \label{Eq: sum of e in eta0t}
    \E\left[\sum_{e \in \eta^{0,t}} (\hre)^2\right] & = \E\left[\sum_{e \in \eta^{0,t}} (\hre)^2 \I(\hre \leq \alpha) \right] +  \E\left[\sum_{e \in \eta^{0,t}} (\hre)^2 \I( \hre > \alpha) \right] \notag \\
    & \leq \alpha^2 \E\left[|\eta^{0,t}|\right] +  \E\left[\sum_{e \in \eta^{0,t}} (\hre)^2 \I(\hre \geq \alpha)\right].
\end{align}
Let $\gamma^1$ be the geodesic of $\rmtlTm(0,nx)$ for some deterministic rule breaking tie, so that $\eta^{0,t} \subset \gamma^1$. Let $\gamma^2$ be the shortest $\Z^d$-path from $0$ to $[0]$. Let $\gamma: = \gamma^1 \cup \gamma^2$ be the random path starting $0$. Notice that $|\gamma^2| = \|[0]-0\|_1 \leq d\|[0]-0\|_\infty$. Therefore, by Lemmas \ref{Lem: hole} and \ref{Lem: large deviation of graph distance Dlambda},
\begin{align}\label{Eq: lagre deviation of |eta|}
    \pr(|\gamma| \geq 2d C_1 n) \leq \pr( |\gamma^1| \geq dC_1 n) + \pr(|\gamma^2| \geq dC_1n) = c^{-1} \exp(-c n),
\end{align}
for some constant $C_1,c>0$.
Applying Corollary \ref{corre} with $L = 2d C_1 n$, we have for some constant $C>0$,
\begin{align}\label{Eq: sum square of re}
   \E\left[\left(\sum_{e \in \eta^{0,t}} (\hre)^2 \right)^2\right] \leq \E\left[\left(\sum_{e \in \gamma} (\hre)^2 \right)^2\right] \leq C n^2 + \sum_{\ell \geq 2dC_1 n} \ell^2 (\pr(|\gamma| \geq 2dC_1 n))^{1/2} \leq 2Cn^2.
\end{align}
By two above estimates and Cauchy-Schwartz inequality, there exists a constant $c>0$ such that
\begin{align*}
    \E\left[\sum_{e \in \gamma }(\hre)^2  \I(\hre \geq \alpha) \I(|\gamma| \geq 2 dC_1 n)\right]  & \leq \E\left[\sum_{e \in \gamma }(\hre)^2 \I(|\gamma| \geq 2 dC_1 n)\right] \\
    &\leq \left(\E\left[\left(\sum_{e \in \gamma} (\hre)^2 \right)^2\right] \pr\left( |\gamma| \geq 2 dC_1 n\right)\right)^{1/2} \\
   &  \leq c^{-1} \exp(-cn).
\end{align*}
For each $N \geq \lfloor \alpha \rfloor +1$, let $I_{e,N} := \I(N-1 \leq \hre< N)$. Therefore, using Lemma \ref{lemmaxbound}, we obtain that
\begin{align}
    \E\left[\sum_{e \in \gamma } (\hre)^2\I(\hre \geq \alpha) \I(|\gamma| \leq 2 dC_1 n)\right]&  = \sum_{N\geq \lfloor \alpha \rfloor +1} \E\left[\sum_{e \in \gamma} (\hre)^2 I_{e,N} \I(|\gamma| \leq 2 dC_1 n)\right] \notag \\ 
    & \leq \sum_{N\geq \lfloor \alpha \rfloor +1} N^2 \E\left[\sum_{e \in \gamma} I_{e,N} \I(|\gamma| \leq 2 dC_1 n)\right] \notag \\
    & \leq \sum_{N\geq \lfloor \alpha \rfloor +1} N^2 \E\left[\max_{\gamma \in \kP_{2dC_1n}}\sum_{e \in \gamma} I_{e,N}\right] \notag \\
    & \leq 2dC C_1n \sum_{N\geq \lfloor \alpha \rfloor +1} N^{d+2} \exp(-N/(Cd)) \leq C_2n\exp(-\alpha/C_2),
\end{align}
for some constant $C_2=C_2(d,p) >0$. It follows from two above estimates that there exists some constant $C> 0$ such that
\begin{align}
   \E\left[\sum_{e \in \eta^{0,t}} (\hre)^2 \I(\hre \geq \alpha)\right]  \leq Cn\exp(-\alpha/C) + C\exp(-n/C).
\end{align}
Combining this estimate with  \eqref{Eq: sum of e in eta0t} yields that for all $\alpha>0$,
\begin{align}
     \E\left[\sum_{e \in \eta^{0,t}} (\hre)^2\right] \leq \alpha^{2} \E[|\eta^{0,t}|]  + Cn\exp(-\alpha/2C)+ C \exp(-n/C).
\end{align}
 Plugging this and \eqref{Eq: sum of e in eta0t of I(re<3M} into \eqref{Eq: bound of double gradient} concludes that
\begin{align*}
      \sum_{e \in \cE}  \E\left[\influ_e(\rmtlTm,\rmtlT^t_M) \I(\tau_e =\tau_e(t)=1) \I(\cQ^{0}_e \cap \cQ^{t}_e)\right] & \leq C\alpha^{2} \E[|\eta^{0,t}|] + Cn\exp(-\alpha/C) + C (\log n)^{2d+4},
\end{align*}
for some constant $C>0$. Hence, we complete the proof of upper bound by merging this estimate, \eqref{Eq: devide influence}, and Lemma \ref{Lemma: total sum of influence of D and T}.

Finally, we observe that 
\begin{align*}
    \E[|\eta^{0,t}|] = \sum_{e \in \cE} \E[\I(e \in \tlpi_M(0,nx)) \I( e \in \tlpi^t_M(0,nx))].
\end{align*}
By Lemma \ref{Lemma: derivative of phi s}, we have $\E[\I(e \in \tlpi_M(0,nx)) \I( e \in \tlpi^t_M(0,nx))]$ is non-increasing in $t$. Consequently, $\E[|\eta^{0,t}|]$ is non-increasing in $t$ as well.
\end{proof}
%%%%%%%%%%%%%%%%%%
\section{The comparison of $\rmtlD^t(0,nx)$ and $\rmtlT_M^t(0,nx)$}\label{Sec: 4}
Given $t \geq 0$, let $\G^t([0]_{t},[nx]_{t})$ be the set of all geodesics of $\rmD^t([0]_{t},[nx]_{t})$. We recall that $\G^t_M([0]_{t},[nx]_{t})$ be the set of all geodesics of $\rmTmt([0]_{t},[nx]_{t})$. If $t=0$, we simply write  $\G([0],[nx])$ and  $\G_M([0],[nx])$ for compactness. In this section, we couple Bernoulli percolation with first truncated passage percolation in the sense that each edge is $t$-open (resp. $t$-closed) if and only if $\omega_e(t)=\tau_e(t)=1$ (resp. $\omega_e(t)= \infty$ and $\tau_e(t) =M$). The next proposition states that, with overwhelming probability, the geodesics of $\rmtlD^t(0,nx)$ and $\rmtlT_M^t(0,nx)$ coincide.
 \begin{proposition}\label{Prop: comparison of pitl and pikm}  
There exists a constant $C>0$ such that for all $t \geq 0$,
 \begin{align}
         \pr(\G^t([0]_{t},[nx]_{t})= \G^t_M([0]_{t},[nx]_{t})) \geq 1 - C\exp(-M^{3/4}/C).
    \end{align}
    As a consequence, we have
     \begin{align}\label{Eq: the difference of intersection}
         \E[| |\tlpi(0,nx) \cap \tlpit(0,nx)|- |\tlpim(0,nx) \cap \tlpimt(0,nx)||] \leq C\exp(-M^{3/4}/C).
    \end{align} 
\end{proposition}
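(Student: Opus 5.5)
By stationarity of the dynamics ($\omega(t)$ has the law of $\omega$ and $\tau(t)$ that of $\tau$), it suffices to prove the first claim for $t=0$; the same argument applies verbatim at time $t$. Throughout, $M=\lfloor(\log n)^2\rfloor$ and $[0],[nx]\in\kC_\infty$, so $\rmD([0],[nx])<\infty$.

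The plan is to identify a high-probability event on which every $\rmT_M$-geodesic is open. On that event,
\[
\rmT_M([0],[nx])=\min_{\gamma}\Big(|\gamma|+(M-1)\,\#\{\text{closed edges of }\gamma\}\Big).
\]
If every minimiser is open, then $\rmT_M([0],[nx])=\rmD([0],[nx])$, because any open path has $\rmT_M$-weight equal to its length. It follows that every open $\rmD$-geodesic is a $\rmT_M$-geodesic. Conversely, every $\rmT_M$-geodesic is open and has length $\rmD([0],[nx])$, hence is a $\rmD$-geodesic. So $\G([0],[nx])=\G_M([0],[nx])$ on this event.

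Define the event
\[
\mathcal{B}:=\{\forall \gamma\in\G_M([0],[nx]),\ \forall e\in\gamma:\ R^{0,0}_e\le M^{3/4}\}\cap\{\|[0]\|_\infty,\ \|[nx]-nx\|_\infty\le M^{3/4}\}.
\]
Here $R^{0,0}_e$ is the effective dynamic radius at $t=0$. I expect the following argument to show that on $\mathcal{B}$, no $\rmT_M$-geodesic has a closed edge $e$ far from the endpoints.

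Suppose $\gamma\in\G_M\subset\bH_M$ contains a closed edge $e$ with $[0],[nx]\notin\Lambda_{3R^{0,0}_e}(e)$. Proposition \ref{Propo: good properties of re}(iii) gives a path $\eta_e$ that avoids $\Lambda_{R^{0,0}_e}(e)$, so in particular it avoids $e$. Its new part $\eta_e\setminus\gamma$ is open and has at most $C_*R^{0,0}_e\le C_*M^{3/4}<M$ edges. Replacing $\gamma$ by $\eta_e$ therefore removes the weight $M$ of $e$ and adds at most $C_*M^{3/4}<M$, which strictly lowers $\rmT_M$. This contradicts geodesicity.

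Closed edges of $\gamma$ within distance $3M^{3/4}$ of the endpoints need a separate argument. Near $[0]$ (which lies in $\kC_\infty$), I would use the event $\mathcal{W}_N$ built into the radius, with $N$ of order $M^{3/4}$. The idea is to let $\gamma$ exit $\Lambda_{3N}$ around the endpoint at a point on the infinite cluster and reconnect it by an open path of length $\le C_*N$. Alternatively, one can simply note that a geodesic containing a closed edge has $\rmT_M$-cost at least $M$ on that segment, and compare it with the open detour of length $\le C_*M^{3/4}$ that Lemma \ref{Lem: large deviation of graph distance Dlambda} provides, applied to points at distance $O(M^{3/4})$ with failure probability $e^{-cM^{3/4}}$.

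To bound $\pr(\mathcal{B}^c)$, split it into three pieces.
\begin{itemize}
\item The endpoint condition fails with probability $\le Ce^{-cM^{3(d-1)/4}}$ by Lemma \ref{Lem: hole}.
\item All geodesics lie in $\Lambda_{Cn}$ except with probability $e^{-cn}$. This follows from Lemma \ref{Lem: large deviation of graph distance Dlambda}, since $\rmT_M\le\rmD$ and a geodesic's length is at most its cost.
\item A union bound over the $O(n^d)$ edges of $\Lambda_{Cn}$ with Proposition \ref{Propo: good properties of re}(ii) gives $n^d e^{-cM^{3/4}}$. This is admissible because $M^{3/4}\le e^{cM}$.
\end{itemize}
Since $M^{3/4}=(\log n)^{3/2}\gg\log n$, we get $n^de^{-cM^{3/4}}\le Ce^{-M^{3/4}/C}$, proving the first claim.

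For the consequence \eqref{Eq: the difference of intersection}, note that on the intersection of the good events at times $0$ and $t$ the geodesic sets coincide. Hence the intersections of geodesics agree, so $\tlpi=\tlpim$ and $\tlpi^t=\tlpimt$. Off this event, the difference is bounded by $|\tlpi(0,nx)|+|\tlpim(0,nx)|$, whose second moment is $O(n^2)$ by Lemma \ref{Lem: large deviation of graph distance Dlambda}. Cauchy--Schwarz then gives $O(n\,e^{-M^{3/4}/2C})\le C'e^{-M^{3/4}/C'}$.

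The main obstacle is handling closed edges close to the endpoints, where Proposition \ref{Propo: good properties of re}(iii) does not apply. For this step I would first try to rerun the bypass construction from \cite[Proposition 2.7]{can2024subdiffusive}, using that the endpoints lie in $\kC_\infty$ together with the $\mathcal{W}_N$ event.
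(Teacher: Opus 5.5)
Your treatment of edges away from the endpoints matches the paper: the bypass via Proposition \ref{Propo: good properties of re}(iii) once $R^{0,0}_e\le M^{3/4}$, and the union bound via (ii) over a box of size $O(n)$. The deduction $\G=\G_M$ once every $\rmT_M$-geodesic is open, and the Cauchy--Schwarz step for \eqref{Eq: the difference of intersection}, also match.

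\textbf{The gap.} It is exactly the step you flag and leave open: closed edges of a geodesic $\gamma$ within distance $O(M^{3/4})$ of $[0]$ or $[nx]$ are never excluded, and neither of your suggestions works as stated. Both need a point $u\in\gamma$ at distance $O(M^{3/4})$ from $[0]$ that is joined to $[0]$ by an \emph{open} path, and nothing on your event $\mathcal{B}$ provides one. Lemma \ref{Lem: large deviation of graph distance Dlambda} only bounds $\pr(\infty>\rmD(a,b)\ge \ell)$, so it is useless unless you already know that $u$ and $[0]$ lie in the same open cluster. Likewise, $\mathcal{W}_N$ only controls $\rmD_{\Lambda_{4N}(e)}(x,y)$ for pairs already connected inside $\Lambda_{3N}(e)$, so it gives no connectivity either.

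\textbf{The missing ingredient} is a connectivity statement, which the paper obtains through the events $\kE_5^t$ and $\kE_6^t$.
\begin{enumerate}
\item Take $u$ on the portion of $\gamma$ after its \emph{last} visit to an inner box around $[0]$ of radius of order $M^{3/4}$ (the paper uses $M^{7/8}$ to separate scales). This portion crosses a surrounding annulus and is already open by your bypass step, and $\gamma_{[0],u}$ contains every edge of $\gamma$ in the inner box.
\item By uniqueness of macroscopic clusters (Lemma \ref{lem: twodisjointclusters}, or Lemma \ref{Lemma: 0 not to infty} with a union bound over starting points), an open crossing of an annulus of width of order $M^{3/4}$ meets $\kC_\infty$ except with probability $e^{-cM^{3/4}}$. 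So $u$ may be taken in $\kC_\infty$.
\item A union bound over the $M^{O(d)}$ candidate positions, combined with Lemma \ref{Lem: large deviation of graph distance Dlambda}, then gives $\rmD([0],u)\le CM^{3/4}$.
\item Since $\gamma_{[0],u}$ is itself a $\rmT_M$-geodesic, $\rmT_M(\gamma_{[0],u})=\rmT_M([0],u)\le\rmD([0],u)\le CM^{3/4}<M$. Hence $\gamma_{[0],u}$ contains no closed edge.
\end{enumerate}
The same argument at $[nx]$ makes every $\rmT_M$-geodesic open, and with that in place the rest of your proof goes through.
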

%%%%%%%%%%%%%%%%%%%%
\begin{proof}[Proof of Proposition \ref{Prop: comparison of pitl and pikm}]
We first define two events
\begin{align*}
    & \kE_1^t:= \{ \|0-[0]_t\|_{\infty}, \|nx-[nx]_t\|_{\infty} \leq M^{7/8}\}; \\
     & \kE_2^t:=\{ \forall \gamma \in \G_M^t([0]_t,[nx]_t), \forall e \in \gamma: \re \leq M^{3/4} \}.
\end{align*}
Set $\kE_3^t: =\kE_1^t \cap \kE_2^t$. By Lemma \ref{Lem: hole}, there exists a constant $c>0$ such that
\begin{align*}
    \pr(\{\kE_1^t\}^c) \leq 2 \pr(\|0-[0]_t\|_{\infty}\geq M^{7/8}) \leq 2 c_1^{-1}\exp(-c_1M^{7/8}).
\end{align*}
Notice that on the event $\kE_1^t$, we have for all $\gamma \in \G_M^t([0]_t,[nx]_t)$, $|\gamma|\leq \rmtlTmt(0,nx)\leq 2n\|x\|_1M$, so that $\gamma \subset \Lambda_{4n\|x\|_1M}$. Therefore,
\begin{align*}
    \pr(\{\kE_2^t\}^c \cap \kE_1^t) & \leq \pr(\exists \gamma \in \G_M^t([0]_t,[nx]_t),\gamma \subset \Lambda_{4n\|x\|_1M}, \exists e \in \gamma: \re \geq M^{3/4}) \\
    & \leq \pr(\exists e \in  \Lambda_{4n\|x\|_1M}: \re \geq M^{3/4})\\
    & \leq c_2^{-1}(8n\|x\|_1M)^d \exp(-c_2M^{3/4}),
\end{align*}
for some constant $c_2>0$. Putting two above inequalities, with some constant $c_3 >0$,
\begin{align*}
    \pr(\{\kE_3^t\}^c) \leq c_3^{-1}\exp(-c_3M^{3/4}).
\end{align*}
Now we will show that on the event $\kE_3^t$,
\begin{align*}
   \kE_4^t:= \{\forall \gamma \in  \G_M^t([0]_t,[nx]_t): \text{ if } e \in \gamma \setminus \{\Lambda_{2M^{7/8}}(0) \cup \Lambda_{2M^{7/8}}(nx)\} \text{ then } e \text{ is $t$-open}\}.
\end{align*}
Indeed, we suppose the hypothesis that $\kE_3^t$ occurs, let $\gamma \in  \G_M^t([0]_t,[nx]_t)$, and $e \in \gamma \setminus \{\Lambda_{2M^{7/8}}(0) \cup \Lambda_{2M^{7/8}}(nx)\}$. It follows from $\kE_3^t$ that $\re \leq M^{3/4}$, so that $\Lambda_{3\re}(e) \cap \Lambda_{M^{7/8}}(0)= \varnothing$ and  $\Lambda_{3\re}(e) \cap \Lambda_{M^{7/8}}(nx) = \varnothing$ since $e \notin \Lambda_{2M^{7/8}}(0) \cup \Lambda_{2M^{7/8}}(nx)$. Thus, on the event $\kE_3^t$, we have $[0]_t \notin \Lambda_{3\re}(e)$ and $ [nx]_t \notin \Lambda_{3\re}(e)$ for $n$ large enough. Applying Proposition \ref{Propo: good properties of re} (iii) to $\gamma\in  \G_M^t([0]_t,[nx]_t)$, we obtain a modified path $\eta_e$ from $[0]_t$ to $[nx]_t$ such that 
\begin{align*}
  e \notin \eta_e,\quad  \eta_e \setminus \gamma \text{ is $t$-open}, \quad |\eta_e \setminus \gamma | \leq C_* \re.
\end{align*}
Hence,
\begin{align*}
    \rmT_M^t(\gamma) \leq   \rmT_M^t(\eta_e) =  \rmT_M^t(\eta_e \cap \gamma) + \rmT_M^t(\eta_e \setminus \gamma) \leq \rmT_M^t(\gamma) - \tau_e(t) + C_* \re,
\end{align*}
which implies that $\tau_e(t) \leq C_* \re < M$, so that $\tau_e(t) = 1$, i.e. $e$ is $t$-open.

Notice that by Lemmas \ref{Lem: hole} and \ref{lem: twodisjointclusters}, there exists a positive constant $C$ such that for all $z \in \Z^d$,
\begin{align*}
    \pr(\kE_5^t(z)) \geq 1 -C\exp(-M^{7/8}/C),
 \end{align*}
 where
\begin{align*}
  \kE_5^t(z):=  \{\kC_\infty^t  & \cap \Lambda_{M^{7/8}}(z) \neq \varnothing \} \cap   \{\forall \gamma_1, \gamma_2 \subset \mathbb{O}^t(\Lambda_{M^{7/8}}(z)): \Diam(\gamma_1),\Diam(\gamma_2)\geq M^{7/8},\gamma_1 \cap \gamma_2 \neq \varnothing\}.
\end{align*}
Moreover, on the event $\kE_4^t$, $\gamma$ has to cross the annuli $\rmA_{2M^{7/8}}(0)= \Lambda_{6M^{7/8}}(0)\setminus \Lambda_{2M^{7/8}}(0) $ and $\rmA_{2M^{7/8}}(nx):=\Lambda_{6M^{7/8}}(nx)\setminus \Lambda_{2M^{7/8}}(nx)$ by two $t$-open paths $\gamma_0$ and $\gamma_{nx}$, respectively. Therefore, on the event $\kE_4^t \cap \kE_5^t(0) \cap \kE_5^t(nx)$, these $t$-open paths have to intersect with $\kC_\infty^t$ at two points $u$ and $v$, respectively (see Figure \ref{de1}). Combining this with Lemma \ref{Lem: large deviation of graph distance Dlambda} yields that for some large constant $C>0$,
\begin{align*}
\pr(\kE_6^t) \geq 1-C\exp(-M^{3/4}/C),
\end{align*}
where
\begin{align*}
  \kE_6^t: = \{ \forall \gamma \in  \G_M^t([0]_t,[nx]_t), & \exists u \in \gamma \cap  \kC_\infty^t \cap \rmA_{2M^{7/8}}(0),\exists  v \in \gamma \cap \kC_\infty^t \cap \rmA_{2M^{7/8}}(nx), \\
  &\|u-[0]_t\|_\infty,  \|v-[nx]_t\|_\infty  \leq 6M^{7/8}: 
    \rmD^t([0]_t,u) \leq CM^{7/8},\rmD^t([nx]_t,v)\leq CM^{7/8} \}.
\end{align*}
\begin{figure}[htbp]
\begin{center}
\includegraphics[width=15cm]{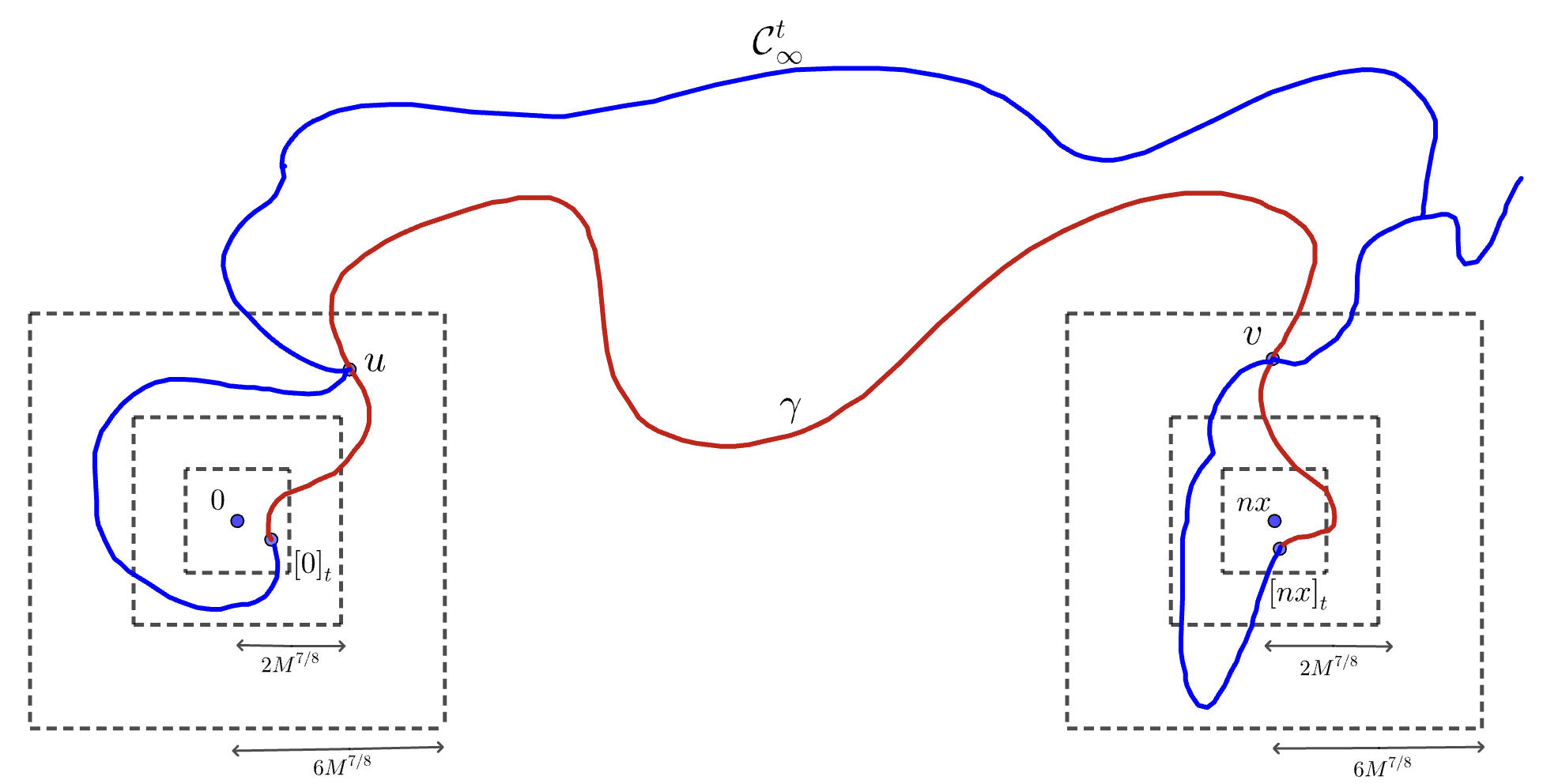}
\caption{Illustration of the event $\kE_6^t$} 
\label{de1}
\end{center}
\end{figure}
It now follows from the event $\kE_4^t \cap \kE_6^t$ that for each $\gamma \in \G_M^t([0]_t,[nx]_t)$, there exist $u \in \gamma \cap \rmA_{2M^{7/8}}(0), v \in \gamma \cap \rmA_{2M^{7/8}}(nx)$ such that sub-geodesics $\gamma_{[0]_t,u}$,$\gamma_{[nx]_t,v}$, and $\gamma \setminus \{\Lambda_{2M^{7/8}}(0) \cup \Lambda_{2M^{7/8}}(nx)\}$  are $t$-open (i.e. does not contain any $M$-weight edges), so that $\gamma$ is $t$-open and $\rmtlD^t(0,nx) \leq \rmD^t(\gamma)= \rmTmt(\gamma)< \infty$. On the other hand, it is clear that $\rmtlD^t(0,nx) \geq \rmtlTmt(0,nx)= \rmTmt(\gamma)$. Hence, on the event $\kE_4^t\cap \kE_6^t$, we have $\rmtlD^t(0,nx) = \rmD^t(\gamma)$, i.e. $\gamma$ is also a geodesic of $\rmtlD^t(0,nx)$, thus $ \G_M^t([0]_t,[nx]_t) \subset \G^t([0]_t,[nx]_t)$. Otherwise, take $\eta \in \G^t([0]_t,[nx]_t)$, so $\rmtlD^t(0,nx)= \rmD^t(\eta)= \rmTmt(\eta)$. Notice that by $\kE_4^t\cap \kE_6^t$, there exists a geodesic $\gamma \in \G_M^t([0]_t,[nx]_t) \subset \G^t([0]_t,[nx]_t)$ such that $\rmtlD^t(0,nx) = \rmD^t(\gamma)= \rmTmt(\gamma)= \rmtlTmt(0,nx)$. Therefore, we have $ \rmtlTmt(0,nx) = \rmTmt(\eta)$, i.e. $\eta$ has to be a geodesic of $\rmtlTmt(0,nx)$. This implies that $ \G^t([0]_t,[nx]_t) \subset \G_M^t([0]_t,[nx]_t)$. Hence, we get for all $t\geq 0$,
\begin{align*}
\pr(\G_M^t([0]_t,[nx]_t) = \G^t([0]_t,[nx]_t)) \geq \pr(\kE^t_4 \cap \kE^t_6) \geq 1-C\exp(-M^{3/4}/C),
\end{align*}
for some constant $C>0$.

Finally, we claim \eqref{Eq: the difference of intersection}. Using Lemma \ref{Lem: large deviation of graph distance Dlambda}, we obtain that
\begin{align*}
\max\{\E[ |\tlpit(0,nx)|^2],\E[ |\tlpimt(0,nx)|^2]\}= \cO(n^2).
\end{align*}
By Cauchy-Schwartz inequality,
\begin{align*}
  \E &[| |\tlpi(0,nx) \cap \tlpit(0,nx)|- |\tlpim(0,nx) \cap \tlpimt(0,nx)||] \\
  & \leq \E[| |\tlpi(0,nx) \cap \tlpit(0,nx)|- |\tlpim(0,nx) \cap \tlpimt(0,nx)||\\
  & \qquad \qquad \qquad \qquad \times \I(\{\G_M([0]_t,[nx]_t) = \G([0]_t,[nx]_t) \cap \G_M^t([0]_t,[nx]_t) = \G^t(0,nx)\}^c)] \\
  & \leq \cO(n)(2\pr(\G_M([0]_t,[nx]_t) \neq \G([0]_t,[nx]_t)))^{1/2} \leq \cO(n)\exp(-M^{3/4}/2C).
\end{align*}
Here for the last line  we have used 
\begin{align*}
\max\{\E[ |\tlpit(0,nx)|^2],\E[ |\tlpimt(0,nx)|^2]\}= \cO(n^2),
\end{align*}
followed by Lemma \ref{Lem: large deviation of graph distance Dlambda}. 
\end{proof}
%%%%%%%%%%%%%%%%%%%%%%%%%%%%
As a consequence, the variance and the total co-influence of $\rmtlD^t$ are well approximated by those of $\rmtlTmt$.
\begin{proposition}\label{Prop: camparison dtl and dkm}
There exists a constant $C>0$ such that for all $t \geq 0$,
    \begin{align}\label{Eq: var t-tm}
       |\Var(\rmtlD^t(0,nx)) - \var(\rmtlTmt(0,nx))| \leq C\exp(-M^{3/4}/C),
    \end{align}  
and 
\begin{align}\label{Eq: compare influence}
     \Bigg|\sum_{e \in \cE}\E\left[\Delta_e(\rmtlD,\rmtlD^t)\right] -\sum_{e \in \cE}\E\left[ \Delta_e(\rmtlTm,\rmtlT^t_M) \right]\Bigg| \leq C\exp(-M^{3/4}/C).
\end{align}
\end{proposition}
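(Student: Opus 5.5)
The plan is to derive both estimates from Proposition \ref{Prop: comparison of pitl and pikm}, together with a simple moment bound. On the good event of that proposition the two passage times agree. Off the good event, which has probability at most $C\exp(-M^{3/4}/C)$, everything is polynomially bounded in $n$; since $M=\lfloor(\log n)^2\rfloor$, a polynomial factor times $\exp(-M^{3/4}/C)$ is still of the form $C'\exp(-M^{3/4}/C')$.

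\textbf{Variance estimate \eqref{Eq: var t-tm}.} Let $G^t$ be the event $\{\G^t([0]_t,[nx]_t)=\G^t_M([0]_t,[nx]_t)\}$. On $G^t$, a common geodesic is $t$-open, so $\rmtlD^t(0,nx)=\rmtlTmt(0,nx)$. We always have $0\le \rmtlTmt\le \rmtlD^t$, and by Lemma \ref{Lem: large deviation of graph distance Dlambda} we get $\E[(\rmtlD^t(0,nx))^4]=\cO(n^4)$. The steps are as follows.
\begin{itemize}
\item Write $\Var(A)-\Var(B)=\E[(A-B)(A+B)]-(\E A-\E B)(\E A+\E B)$ with $A=\rmtlD^t(0,nx)$ and $B=\rmtlTmt(0,nx)$.
\item Since $A-B=(A-B)\I((G^t)^c)$, Cauchy--Schwarz bounds each term by $\cO(n^2)\,\pr((G^t)^c)^{1/2}$.
\item This gives $\cO(n^2)\exp(-M^{3/4}/2C)$, which is absorbed as explained above.
\end{itemize}

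\textbf{Co-influence estimate \eqref{Eq: compare influence}.} Here the difficulty is that the derivatives involve flipped configurations $\sigma_e^a$, so the good event must hold for the modified configurations as well. Work in the coupling $\tau_e=\omega_e\wedge M$, where a closed edge carries $\infty$ in $\omega$ and $M$ in $\tau$.
\begin{itemize}
\item Define $\widehat G_e$ as the event that $G^0$ and $G^t$ hold for each of the configurations $\sigma_e^{a}(\omega)$, $a\in\{1,\infty\}$ (and the corresponding ones for $\omega(t)$).
\item By Lemma \ref{Lem: Efx1x2=El1}, each $\E[\Delta_e]$ equals $p(1-p)$ times an expectation of $\nabla^{\infty,1}$-type products, in which only the two values of the coordinate $e$ appear.
\item Each flipped configuration is obtained by conditioning the law on $\omega_e$, so $\pr(\widehat G_e^c)\le C p^{-1}(1-p)^{-1}\exp(-M^{3/4}/C)$.
\item On $\widehat G_e$ the two co-derivatives coincide, because the passage times coincide in every configuration involved.
\end{itemize}
It remains to sum over $e$. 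Edges far from the geodesics must contribute negligibly.
\begin{itemize}
\item For $e$ outside $\Lambda_{Cn}$ (or $\Lambda_{n^2}$), both derivatives vanish unless a geodesic or a regularized point reaches distance $\|e\|_\infty$. By Lemmas \ref{Lem: hole} and \ref{Lem: large deviation of graph distance Dlambda} this has probability at most $\exp(-c\|e\|_\infty)$, which is summable.
\item A sharper bound holds: $\nabla_e\ne0$ forces $e$ to lie on a geodesic in one of the two flipped configurations, or to change a regularized point. This reduces the sum to $\cO(n^d)$ relevant edges.
\item For each relevant edge, Cauchy--Schwarz gives the bound $\E[|\Delta_e(\rmtlD,\rmtlD^t)|^2]^{1/2}\,\pr(\widehat G_e^c)^{1/2}$, and the same for $\Delta_e(\rmtlTm,\rmtlTmt)$.
\end{itemize}

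\textbf{Main obstacle.} The hardest step is the second-moment bound on $\Delta_e(\rmtlD,\rmtlD^t)$, because $\rmtlD$ evaluated at a configuration where $e$ is closed could be infinite, or could jump because a regularized point moves. I would first note that $\rmtlD$ is always finite, since the regularized points lie in the infinite cluster of that configuration. I would then bound $|\nabla_e\rmtlD|\le \rmtlD\circ\sigma_e^{\infty}+\rmtlD\circ\sigma_e^{1}$. Each term has polynomial moments by Lemma \ref{Lem: large deviation of graph distance Dlambda}, applied to the law conditioned on $\omega_e$; this costs only a factor $1/(1-p)$ or $1/p$, exactly as in the proof of Lemma \ref{Lemma: total sum of influence of D and T}. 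Total error: $\cO(n^{d+4})\exp(-M^{3/4}/2C)\le C'\exp(-M^{3/4}/C')$.
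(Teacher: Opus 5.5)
Your variance argument is essentially the paper's: Cauchy--Schwarz against $\pr(\rmtlD^t(0,nx)\neq\rmtlTmt(0,nx))$, with polynomial moments absorbed into $\exp(-M^{3/4}/C)$. Your co-influence argument is also correct, but it is organized differently from the paper's, mainly in how you sum over infinitely many edges. You first use Lemma \ref{Lem: Efx1x2=El1} to reduce both sides to the deterministic-flip products $\nabla^{\infty,1}_e\rmtlD\,\nabla^{\infty,1}_e\rmtlD^t$ and $\nabla^{M,1}_e\rmtlTm\,\nabla^{M,1}_e\rmtlTmt$. You then impose the good event simultaneously on the four configurations $\sigma^a_e(\omega)$, $\sigma^a_e(\omega(t))$, paying only a conditioning factor. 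Finally you truncate spatially. The polynomially many edges in $\Lambda_{Cn}$ each cost $\mathrm{poly}(n)\,\pr(\widehat G_e^c)^{1/2}$. The far edges contribute $\mathrm{poly}(n)e^{-c\|e\|_\infty}$ each, which sums to $\mathrm{poly}(n)e^{-cn}$, far below $\exp(-M^{3/4}/C)$.

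The paper proceeds differently. It keeps the random flips through the events $\cA_e,\cB^t_e$ and proves the two one-sided inequalities separately. It splits along $\cQ^0_e$. The $\cQ^0_e$ part is localized to edges on the geodesic, using an auxiliary resampled weight $\omega''_e$ and a split on $|\tlpi(0,nx)|\le C'n$. The $\{\cQ^0_e\}^c$ part is summed using the decay \eqref{Eq: probability of Q^c}.

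Your route is more elementary and symmetric. It also avoids the somewhat delicate $\omega''_e$ step, where the geodesic depends on $e$ and $\cA_e$ depends on $\omega_e$. The paper's localization to the geodesic is finer, but that extra precision is unnecessary here because polynomial factors get absorbed anyway.

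One correction is needed for the far edges. A change of regularized point is not covered by ``a geodesic or $[0]$ reaches distance $\|e\|_\infty$''. Indeed, $[0]$ can stay near the origin while its only connection to infinity passes through a distant $e$. So besides Lemmas \ref{Lem: hole} and \ref{Lem: large deviation of graph distance Dlambda}, you also need the finite-cluster bound of Lemma \ref{Lemma: 0 not to infty}, used exactly as in \eqref{Eq: probability of Q^c}.
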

\begin{proof}[Proof of Proposition \ref{Prop: camparison dtl and dkm}]
We first prove \eqref{Eq: var t-tm}. By Cauchy-Schwartz inequality, 
\begin{align*}
   | \Var&(\rmtlD^t(0,nx))- \var(\rmtlTmt(0,nx))| \\
   & \leq 2\left(\E[(\rmtlD^t(0,nx)- \rmtlTmt(0,nx))^2](\Var(\rmtlD^t(0,nx))+ \var(\rmtlTmt(0,nx)))\right)^{1/2}.
\end{align*}
We remark that if $\G^t([0]_{t},[nx]_{t})= \G^t_M([0]_{t},[nx]_{t})$ then $\rmtlD^t(0,nx)= \rmtlTmt(0,nx)$. Therefore, there exists a constant $C>0$ such that for all $t \geq 0$,
\begin{align}\label{Eq: D= Tm}
     \pr(\rmtlD^t(0,nx)= \rmtlTmt(0,nx)) \geq \pr(\G^t([0]_{t},[nx]_{t})= \G^t_M([0]_{t},[nx]_{t})) \geq 1 - C\exp(-M^{3/4}/C).
\end{align}
 Thus by Proposition \ref{Prop: comparison of pitl and pikm} and Cauchy-Schwartz inequality, 
\begin{align}\label{Eq: diff d and t}
    \E[(\rmtlD^t(0,nx) &- \rmtlTmt(0,nx))^2]  =  \E[(\rmtlD^t(0,nx)- \rmtlTmt(0,nx))^2\I(\rmtlD^t(0,nx)\neq \rmtlTmt(0,nx))] \notag \\
    & \leq \left(8(\E[(\rmtlD^t(0,nx))^4] +\E[(\rmtlTmt(0,nx))^4]) \pr(\rmtlD^t(0,nx)\neq \rmtlTmt(0,nx))\right)^{1/2} \notag \\
    & \leq Cn^2\exp(-M^{3/4}/C),
\end{align}
for some constant $C>0$. Combining this with  $\max(\Var(\rmtlD^t(0,nx)),\var(\rmtlTmt(0,nx)))= \cO(n) $, we obtain \eqref{Eq: var t-tm}.
%%%%%%%%%%%%%%%%%%%%%%%%%%%%%%%
We now prove \eqref{Eq: compare influence}. For $e \in \cE$ and $t \geq 0$, we define
\begin{align*}
     & \cA_e:=\{\rmtlD(0,nx) = \rmtlT_M(0,nx), \rmtlD(0,nx) \circ \sigma^{\omega^1_e}_e(\omega)= \rmtlT_M(0,nx) \circ \sigma^{\tau^1_e}_e(\tau)\},
\end{align*}
and 
\begin{align*}
     & \cB_e^t:=\{\rmtlD^t(0,nx) = \rmtlT^t_M(0,nx), \rmtlD^t(0,nx) \circ \sigma^{\omega^2_e}_e(\omega(t))= \rmtlT^t_M(0,nx) \circ \sigma^{\tau^2_e}_e(\tau(t))\}.
\end{align*}
Using \eqref{Eq: D= Tm} and union bound, we have for some $C>0$,
\begin{align}\label{Eq: A0At}
    \pr(\{\cA_e \cap \cB_e^t\}^c) \leq  C\exp(-M^{3/4}/C).
\end{align}
Observe that on the event $\cA_e \cap \cB_e^t$, we have $\Delta_e(\rmtlD,\rmtlD^t) = \Delta_e(\rmtlT_M,\rmtlT^t_M)$. Thus,
\begin{align}\label{Eq: sum detae bound}
    \sum_{e \in \cE}\E\left[\Delta_e(\rmtlD,\rmtlD^t)\right] \leq \sum_{e \in \cE}\E\left[\Delta_e(\rmtlT_M,\rmtlT^t_M)\right] + \sum_{e \in \cE}\E\left[\Delta_e(\rmtlD,\rmtlD^t)\I(\{\cA_e \cap \cB^t_e\}^c)\right].
\end{align}
After straightforward algebra, we can write 
\begin{align} \label{Eq: p(1-p)}
    \E[|\Delta_e(\rmtlD,\rmtlD^t)| \I(\{\cA_e \cap \cB^t_e\}^c)] & =  \E[|\Inf_e (\rmtlD,\rmtlD^t)|\I(\omega_e^1 = \omega_e^2 = \infty, \omega_e = 1) \I(\{\cA_e \cap \cB^t_e\}^c)] \notag \\
    &  \qquad \qquad + \E[|\Inf_e (\rmtlD,\rmtlD^t)|\I(\omega_e^1 = \omega_e^2 = 1, \omega_e = \infty) \I(\{\cA_e \cap \cB^t_e\}^c)]  \notag \\
    & \leq  2\E[|\Inf_e (\rmtlD,\rmtlD^t)| \I(\{\cA_e \cap \cB^t_e\}^c)], 
\end{align}
where 
\begin{align*}
    \Inf_e(\rmtlD,\rmtlD^t) := \nabla_e^{\infty,1} \rmtlD(0,nx) \nabla^{\infty,1}_e \rmtlD^t(0,nx).
\end{align*}
Let $\omega'':=(\omega''_e)_{e \in \cE}$ be an independent copy of $\omega$ and independent of $\omega', \tau^1,\tau^2, \omega^1, \omega^2$. Observe that $\omega''_e$ is independent of two events $\{\cA_e \cap \cB^t_e\}$ and $\cQ_e^0$. Using the similar argument as in \eqref{Eq: delta1}, we obtain that
\begin{align*}
   0 \leq \Inf_e (\rmtlD,\rmtlD^t) \I(\omega''_e =1) \I(\cQ_e^0) & \leq  \Inf_e (\rmtlD,\rmtlD^t) \I(e \in \tlpi(0,nx))\I(\cQ_e^0) \leq  |\Inf_e (\rmtlD,\rmtlD^t)| \I(e \in \tlpi(0,nx)),
\end{align*}
where we remark that $\tlpi(0,nx)$ is the geodesics in environment $(\omega''_e, (\omega_{e'})_{e' \neq e})$. Therefore, we get
\begin{align}\label{Eq: sum delta_e-sumde}
    \sum_{e \in \cE}\E\left[|\Inf_e (\rmtlD,\rmtlD^t)| \I(\cQ_e^0)  \I(\{\cA_e \cap \cB^t_e\}^c) \right] 
    & \leq \frac{1}{p} \E\left[ \sum_{e \in \tlpi(0,nx)} |\Inf_e(\rmtlD,\rmtlD^t)| \I(\{\cA_e \cap \cB^t_e\}^c) \right].
\end{align}
 By Cauchy-Schwarz inequality,
 \begin{align}\label{Eq: sum of inf_e}
 \E & \left[ \sum_{e \in \tlpi(0,nx)}|\Inf_e(\rmtlD,\rmtlD^t)|\I(\{\cA_e \cap \cB^t_e\}^c)\right]  \leq \E\left[\sum_{e \in \tlpi(0,nx)} |\Inf_e (\rmtlD,\rmtlD^t)|\I(\{\cA_e \cap \cB^t_e\}^c)\I(|\tlpi(0,nx)|\leq C'n)\right] \notag \\
 & \qquad \qquad \qquad \qquad \qquad \qquad \qquad \qquad \qquad  +   \E\left[\sum_{e \in \tlpi(0,nx)}|\Inf_e(\rmtlD,\rmtlD^t)|\I(\{\cA_e \cap \cB^t_e\}^c)\I(|\tlpi(0,nx)| \geq C'n) \right] \notag \\
  & \leq \sum_{e \in \Lambda_{C'n}} \E\left[|\Inf_e (\rmtlD,\rmtlD^t)|\I(\{\cA_e \cap \cB^t_e\}^c)\right] + \sum_{\ell \geq C'n}\E\left[ \sum_{e \in \tlpi(0,nx)} |\Inf_e (\rmtlD,\rmtlD^t)|\I(\{\cA_e \cap \cB^t_e\}^c)\I(|\tlpi(0,nx)|=\ell) \right] \notag  \\
  & \leq\sum_{e \in \Lambda_{C'n}} \E\left[|\Inf_e (\rmtlD,\rmtlD^t)|\I(\{\cA_e \cap \cB^t_e\}^c)\right]+ \sum_{\ell \geq C'n}\sum_{e \in \Lambda_{d\ell}}\left(\E\left[\left(\Inf_e (\rmtlD,\rmtlD^t)\right)^2\I(\{\cA_e \cap \cB^t_e\}^c)\right] \pr(|\tlpi(0,nx)|=\ell) \right)^{1/2},
 \end{align}
 where $C'$ is the constant defined as in Lemma \ref{Lem: large deviation of graph distance Dlambda}.
 Thank to Cauchy-Schwarz inequality again, we have
 \begin{align*}
\E\left[\left(\Inf_e (\rmtlD,\rmtlD^t)\right)^2\I(\{\cA_e \cap \cB^t_e\}^c)\right] \leq \left(\E\left[\left(\Inf_e (\rmtlD,\rmtlD^t)\right)^4\right]\pr(\{\cA_e \cap \cB^t_e\}^c)\right)^{1/2}.
 \end{align*}
 It is clear that
 \begin{align*}
    \E\left[\left(\Inf_e (\rmtlD,\rmtlD^t)\right)^4\right]  =  \E\left[(\nabla_e^{\infty,1} \rmtlD(0,nx))^4 (\nabla^{\infty,1}_e \rmtlD^t(0,nx))^4\right] & \leq \left(\E\left[(\nabla_e^{\infty,1} \rmtlD(0,nx))^8\right]\E \left[(\nabla^{\infty,1}_e \rmtlD^t(0,nx))^8 \right]\right)^{1/2} \\
    & = \E\left[(\nabla_e^{\infty,1} \rmtlD(0,nx))^8 \right] =  \cO(n^8).
 \end{align*}
 Plugging this with \eqref{Eq: A0At} yields that
 \begin{align*}
     \E\left[\left(\Inf_e (\rmtlD,\rmtlD^t)\right)^2\I(\{\cA_e \cap \cB^t_e\}^c)\right] =  C\exp(-M^{3/4}/C),
 \end{align*}
 for some constant $C>0$. 
  Combining these inequalities with Lemma \ref{Lem: large deviation of graph distance Dlambda}, one has for some constant $C>0$ that
 \begin{align}
     \sum_{\ell \geq C'n}\sum_{e \in \Lambda_{d\ell}}\left(\E\left[\left(\Inf_e (\rmtlD,\rmtlD^t)\right)^2\I(\{\cA_e \cap \cB^t_e\}^c)\right]\right)^{1/2}\left(\pr(|\tlpi(0,nx)|=\ell) \right)^{1/2} 
     & \leq C\exp(-M^{3/4}/C).
 \end{align}
 Using two last inequalities, \eqref{Eq: sum delta_e-sumde}, and \eqref{Eq: sum of inf_e} give us
 \begin{align} \label{Eq: sum of co-influencexaebe}
      \sum_{e \in \cE}\E\left[|\Inf_e (\rmtlD,\rmtlD^t))|\I(\cQ_e^0) \I(\{\cA_e \cap \cB^t_e\}^c)\right]  \leq C\exp(-M^{3/4}/C),
 \end{align}
 for some constant $C>0$. Thanks to Cauchy-Schwarz inequality, we get for some constant $c>0$,
 \begin{align}\label{Eq: influence Q^C ABE}
      \sum_{e \in \cE}\E& \left[|\Inf_e (\rmtlD,\rmtlD^t))|\I(\{\cQ_e^0\}^c) \I(\{\cA_e \cap \cB^t_e\}^c)\right]  \leq \sum_{e \in \cE}\left(\E\left[|\Inf_e (\rmtlD,\rmtlD^t))|^2\I(\{\cQ_e^0\}^c) \pr(\{\cA_e \cap \cB^t_e\}^c)\right]\right) \notag \\
     &  \leq  \sum_{e \in \cE}\left(\E\left[|\Inf_e (\rmtlD,\rmtlD^t)|^4 \right]\right)^{1/2}\left(\pr(\{\cQ_e^0\}^c)\right)^{1/4} \left(\pr(\{\cA_e \cap \cB^t_e\}^c)\right)^{1/4}  \notag \\
     & \leq c^{-1}\sum_{e \in \cE}  \exp(-c\min(\|e\|_\infty,\|e-nx\|_{\infty}) \exp(-cM^{3/4}) \notag \\
     & \leq  c^{-1}\exp(-cM^{3/4}/2),
 \end{align}
where for the third inequality we have used \eqref{Eq: probability of Q^c} and Proposition \ref{Prop: comparison of pitl and pikm}. Combining two above upper bounds with \eqref{Eq: p(1-p)} and \eqref{Eq: sum detae bound} implies that there exists $C> 0$ such that
 \begin{align}\label{Eq: DDt<}
     \sum_{e \in \cE}\E\left[\Delta_e(\rmtlD,\rmtlD^t)\right] \leq \sum_{e \in \cE}\E\left[\Delta_e(\rmtlT_M,\rmtlT^t_M)\right] + C\exp(-M^{3/4}/C).
 \end{align}
 On the other hand,
 \begin{align*}
\sum_{e \in \cE}\E\left[\Delta_e(\rmtlT_M,\rmtlT^t_M)\right] \leq \sum_{e \in \cE}\E\left[\Delta_e(\rmtlD,\rmtlD^t)\right]  + \sum_{e \in \cE}\E\left[\Delta_e(\rmtlTm,\rmtlT^t_M)\I(\{\cA_e \cap \cB^t_e\}^c)\right].
\end{align*}
By a similar argument as in \eqref{Eq: sum of co-influencexaebe} and \eqref{Eq: influence Q^C ABE}, we deduce that for some constant $C>0$,
\begin{align*}
    \sum_{e \in \cE}\E\left[|\Delta_e(\rmtlTm,\rmtlT^t_M)|\I(\{\cA_e \cap \cB^t_e\}^c)\right] \leq C\exp(-M^{3/4}/C).
\end{align*}
Hence,
 \begin{align}\label{Eq: TTt<}
     \sum_{e \in \cE}\E\left[\Delta_e(\rmtlT_M,\rmtlT^t_M)\right] \leq \sum_{e \in \cE}\E\left[\Delta_e(\rmtlD,\rmtlD^t)\right]  + C\exp(-M^{3/4}/C).
 \end{align}
The result follows by merging \eqref{Eq: DDt<} and \eqref{Eq: TTt<}.
\end{proof}
%%%%%%%%%%%%%%%%%%%%%%%%%%%%%%%%%%%%%%%%%%
%%%%%%%%%%%%%%%%%%%%%%%%%%%%%%%%%%%%%%%%%%
\appendix
\section{Dynamical formula for variance: Proof of Lemma \ref{Lemma: covariacne formula}}\label{Appendix: DF for variance}
Given $h: \{\ell,L\}^{|E|}  \to \R$ and $e \in E$, let us consider the interpolating function between $X$ and $X'$, 
\begin{align*}
   \phi(t_e):= \E[h(X)h(X(t_e,(t_{e'})_{e'\neq e})].
\end{align*}
We first derive the relation between derivative of $\phi(t_e)$ and the co-influence at time $t_e$.
\begin{lemma}\label{Lemma: derivative of phi s}
 For any functions $h \in L^2(G_p^{|E|})$ and $e \in E$, we have $\phi(t_e)$ is non-increasing in $t_e$ and 
\begin{align}
   \phi'(t_e) =  -\E\left[\nabla^{X_e,X^1_e}_e h(X) \nabla^{X_e,X^2_e}_e h(X((t_e)_{e \in E}))\right]. 
\end{align}
\end{lemma}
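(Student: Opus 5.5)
We differentiate $\phi$ directly in $t_e$, keeping all other parameters $t_{e'}$ fixed. Write $Y:=X(t_e,(t_{e'})_{e'\neq e})$. Conditionally on all coordinates except those indexed by $e$, the pair $(X_e,Y_e)$ has a simple law. With probability $1-t_e$ we have $U_e\ge t_e$ and then $Y_e=X_e$. With probability $t_e$ we have $Y_e=X'_e$, which is independent of $X_e$. Set $\bar X:=(X_{e'})_{e'\neq e}$ and $\bar Y:=(Y_{e'})_{e'\neq e}$, both of which are independent of $(X_e,X'_e,U_e)$. Conditioning on them gives the decomposition
\begin{align*}
\phi(t_e)=(1-t_e)\,\E\big[h(X)h(\sigma_e^{X_e}Y)\big]+t_e\,\E\big[h(X)h(\sigma_e^{X^2_e}Y)\big],
\end{align*}
where $X^2_e$ is an independent copy of $X_e$. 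Neither expectation depends on $t_e$, because $(\bar X,\bar Y)$ does not involve $U_e$. Hence $\phi$ is affine in $t_e$, with
\begin{align*}
\phi'(t_e)=\E\big[h(X)h(\sigma_e^{X^2_e}Y)\big]-\E\big[h(X)h(\sigma_e^{X_e}Y)\big]=-\E\big[h(X)\,\nabla_e^{X_e,X^2_e}h(Y)\big].
\end{align*}
Integrability is automatic: $h\in L^2$, each marginal is $G_p^{|E|}$, and Cauchy--Schwarz applies. There is no issue with infinite $L$, since $h$ is simply a function on $\{\ell,L\}^{|E|}$.

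The next step symmetrises the factor $h(X)$ into a discrete derivative. In the expression $\E[h(X)\nabla_e^{X_e,X^2_e}h(Y)]$, the factor $\nabla_e^{X_e,X^2_e}h(Y)=h(\sigma_e^{X_e}\bar Y)-h(\sigma_e^{X^2_e}\bar Y)$ depends on the $e$-coordinate only through $X_e$ and $X^2_e$. Introduce $X^1_e$, an independent copy, and observe that
\begin{align*}
\E\big[h(\sigma_e^{X^1_e}X)\,\nabla_e^{X_e,X^2_e}h(Y)\big]=0 .
\end{align*}
This holds because, conditionally on $(\bar X,\bar Y,X^1_e)$, the variables $X_e$ and $X^2_e$ are i.i.d., so swapping them flips the sign of the second factor while leaving the first unchanged. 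Subtracting this zero term replaces $h(X)$ by $\nabla_e^{X_e,X^1_e}h(X)$, which yields the stated formula
\begin{align*}
\phi'(t_e)=-\E\big[\nabla^{X_e,X^1_e}_e h(X)\,\nabla^{X_e,X^2_e}_e h(Y)\big].
\end{align*}

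For monotonicity, it suffices to show the right-hand expectation is non-negative. Equivalently, by the affine form, we need $\E[h(X)h(\sigma_e^{X_e}Y)]\ge \E[h(X)h(\sigma_e^{X^2_e}Y)]$. Condition on $(\bar X,\bar Y)$ and set $a(x):=h(x,\bar X)$ and $b(x):=h(x,\bar Y)$, functions on $\{\ell,L\}$. The required inequality becomes $\E[a(X_e)b(X_e)]\ge \E[a]\E[b]$, i.e. $\Cov(a(X_e),b(X_e))\ge0$, which need not hold pointwise. This is the main obstacle. To overcome it, average over $(\bar X,\bar Y)$ using exchangeability in the noise structure. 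Write $\bar Y=\bar X(\bar t)$ and realise the noise as a two-step resampling from a common ``midpoint'' $Z$ with $\bar X=Z(\bar s)$ and $\bar Y=Z(\bar s)'$ conditionally independent given $Z$, where $(1-s_{e'})^2=1-t_{e'}$. Then
\begin{align*}
\E\big[\Cov(a,b)\,\big|\,\bar X,\bar Y\big]\ \text{averaged}\ =\ \E\big[\Var_{X_e}(g(X_e,Z))\big]\ge 0,\qquad g(x,Z):=\E[h(x,\bar X)\mid Z].
\end{align*}
This gives $\phi'\le0$. If this midpoint representation proves awkward, the fallback is to expand $h$ in the orthonormal Efron--Stein/Fourier--Walsh basis for the two-point measure $G_p$. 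In that basis each coordinate contributes a factor $(1-t_{e'})^{|S\cap\{e'\}|}$, so $\phi=\sum_S \hat h(S)^2\prod_{e'\in S}(1-t_{e'})$, which is manifestly non-increasing in each $t_e$, and its derivative matches the formula above.
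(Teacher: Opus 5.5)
Your proposal is correct. The derivative formula is obtained essentially as in the paper. The paper computes the increment $\phi(t_e+\delta)-\phi(t_e)$ and notes that it is nonzero only on $\{U_e\in[t_e,t_e+\delta]\}$. It equals $-\delta\,\E[h(X)\nabla^{X_e,X^2_e}_e h(X((t_e)_{e\in E}))]$, and the paper then subtracts the same vanishing term $\E[h\circ\sigma^{X^1_e}_e(X)\,\nabla^{X_e,X^2_e}_e h(X((t_e)_{e\in E}))]$ that you use. Your explicit affine decomposition is the same computation, stated exactly; the paper's difference quotient is in fact independent of $\delta$.

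You genuinely differ on monotonicity. The paper does not prove it: it imports the fact that $\phi$ is non-increasing from \cite[Lemma 11]{ahlberg2023chaos} and then reads off non-negativity of the co-influence from $\phi'\le 0$. You prove it directly, in one of two ways:
\begin{itemize}
\item write $(\bar X,\bar Y)$ as two conditionally independent resamplings of a common midpoint $Z$, which gives the explicit identity $-\phi'(t_e)=\E_Z[\Var_{X_e} g(X_e,Z)]\ge 0$;
\item expand in the product basis, $\phi=\sum_S \hat h(S)^2\prod_{e'\in S}(1-t_{e'})$.
\end{itemize}

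Both routes are valid, but each rests on a standard fact you only assert:
\begin{itemize}
\item For the midpoint argument, the single-site kernel $K_t(x,\cdot)=(1-t)\delta_x+tG_p$ is reversible for $G_p$ and satisfies $K_sK_s=K_{1-(1-s)^2}$. This is exactly what makes the law of $(\bar X,\bar X(\bar t))$ coincide with that of the two-sided resampling from $Z$.
\item For the basis expansion with countable $E$, finitely supported products of the normalized characters $(\I(x=\ell)-p)/\sqrt{p(1-p)}$ form an orthonormal basis of $L^2(G_p^{|E|})$.
\end{itemize}

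The paper's citation buys brevity. Your argument makes the lemma self-contained and gives a manifestly non-negative representation of the co-influence.
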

\begin{proof}
    To compute $\phi'(t_e)$ with $t_e \in (0,1)$, we take $\delta > 0$ such that $t_e + \delta \in [0,1]$.
Write $Y : = X(t_e,(t_{e'})_{e' \neq e})$ and $Y':= X(t_e+ \delta, (t_{e'})_{e' \neq e})$. Then,
\begin{align}\label{Eq: diff phi+delta-phi}
    \phi(t_e+\delta)-\phi(t_e)=  \E[h(X) (h(Y')-h(Y))].
\end{align}
Notice that if $U_e \not \in [t_e,t_e +\delta]$ then $Y = Y'$. Otherwise, we have $Y = (X_e, (X_{e'}(t_{e'}))_{e'\neq e})$ and $Y' = (X^2_e, (X_{e'}(t_{e'}))_{e'\neq e})$ with remark that $X^2_e$ is the independent copy of $X_e$. Therefore,
\begin{align*}
    \E[h(X) (h(Y')-h(Y))] & = \E[h(X) (h(Y')-h(Y))\I(U_e \in [t_e,t_e + \delta])] \\
   & =- \delta \E\left[h(X) \nabla^{X_e,X^2_e}_e h(X((t_e)_{e \in E}))\right].
\end{align*}
By $X_e^1$ is independent of $X_e$ and $X_e^2$,
\begin{align*}
    \E\left[h \circ \sigma^{X^1_e}_e(X) \nabla^{X_e,X^2_e}_e h(X((t_e)_{e \in E}))\right]  & = \E \left[\E_{X_e,X^2_e} h \circ \sigma^{X^1_e}_e(X) \nabla^{X_e,X^2_e}_e h(X((t_e)_{e \in E}))\right] \\
    & =  \E \left[ h \circ \sigma^{X^1_e}_e(X) \E_{X_e,X^2_e}\left[\nabla^{X_e,X^2_e}_e h(X((t_e)_{e \in E}))\right]\right] = 0.
\end{align*}
Hence,
\begin{align}
 \E[h(X) (h(Y')-f(Y))] & = - \delta \E\left[(h(X)-h \circ \sigma^{X^1_e}_e(X)) \nabla^{X_e,X^2_e}_e h(X((t_e)_{e \in E}))\right] \notag \\
 & =- \delta   \E\left[\nabla^{X_e,X^1_e}_e h(X) \nabla^{X_e,X^2_e}_e h(X((t_e)_{e \in E}))\right].
\end{align}
Combining this with $\eqref{Eq: diff phi+delta-phi}$ give us
\begin{align*}
   \phi'(t_e) = \frac{d}{d t_e} \E[h(X) h(X(t_e,(t_{e'})_{e' \neq e}))]=  -\E\left[\nabla^{X_e,X^1_e}_e h(X) \nabla^{X_e,X^2_e}_e h(X((t_e)_{e \in E}))\right]. 
\end{align*}
Finally, by \cite[Lemma 11]{ahlberg2023chaos}, $\phi(t_e)$ is non-increasing, and thus $\phi'(t_e) \leq 0$. It yields that
\begin{align*}
    \E\left[\nabla^{X_e,X_e^1}_e h(X) \nabla^{X_e,X_e^2}_e h(X((t_e)_{e \in E}))\right] \geq 0.
\end{align*}
This completes the proof.
\end{proof}
\begin{proof}[Proof of Lemma \ref{Lemma: covariacne formula}]
We first observe that
\begin{align} \label{Eq: cov fx fxt}
    \Cov(f(X),f(X(t))) = \E[f(X) f(X(t))]-\E[f(X) f(X(1))] = \int_{t}^1- \frac{d }{ds} \E[f(X) f(X(s))]ds.
\end{align}
In the case that $E$ is finite, by the chain rule, we have
\begin{align}\label{Eq: efx fxt}
   \frac{d}{ds} \E[f(X) f(X(s))]  = \sum_{e \in E} \frac{d}{d t_e} \E[f(X) f(X((t_e)_{e \in \E}))] \Big|_{t_e = s,\, \forall e \in E}.
\end{align}
Applying Lemma \ref{Lemma: derivative of phi s} to $h = f$, $e \in E$, we obtain that
\begin{align*}
    \phi'(t_e) = \frac{d}{d t_e} \E\left[f(X) f(X((t_e)_{e \in \E}))\right]=  -\E\left[\nabla^{X_e,X^1_e}_e f(X) \nabla^{X_e,X^2_e}_e f(X((t_e)_{e \in E}))\right]. 
\end{align*}
Taking $t_e = s$ for all $e \in E$ and plugging into \eqref{Eq: efx fxt} and \eqref{Eq: diff phi+delta-phi}, we obtain \eqref{Eq of lem: covariance}. Since $$ \forall s \in [0,1], \quad \E\left[\nabla^{X_e,X^1_e}_e f(X) \nabla^{X_e,X^2_e}_e f(X(s))\right] \geq 0,$$ 
we have $\Cov(f(X),f(X(t)))$ is non-negative and non-increasing in $t \in [0,1]$.

If $E$ is (countable) infinite, we can enumerate its elements as $E=\{e_1,e_2,e_3,\ldots\}$. We define a sequence of $\sigma$-algebras by
\begin{align*}
   \mathcal{F}_0:= \{\varnothing\}; \quad \mathcal{F}_m:= \sigma(\{X_{e_1},X'_{e_1},U_{e_1}\},\ldots,\{X_{e_m},X'_{e_m},U_{e_m}\}).
\end{align*}
Now we consider a martingale
\begin{align*}
    g_m(X(t))= \E[f(X(t))|\cF_m].
\end{align*}
Applying the previous result for finite case, we obtain that 
\begin{align*}
      \Cov(g_m(X),g_m(X(t))) = \int_t^1 
\sum_{i=1}^m\E\left[\nabla^{X_{e_i},X_{e_i}^1}_{e_i} g_m(X) \nabla^{X_{e_i},X_{e_i}^2}_{e_i} g_m(X(s))\right] ds.
\end{align*}
By the assumption that $f$ has the finite second moment and the Doob martingale theorem, we have $g_m(X(t))$ convergence to $f$ in $L^2$. Therefore, we have
\begin{align*}
    \Cov(g_m(X),g_m(X(t))) \to \Cov(f(X),f(X(t))),
\end{align*}
as $m \to \infty$. Observe that the co-influence is always non-negative and for each $i \geq 1$ and $s \in [0,1]$, if $m  < i$, then $\E\left[\nabla^{X_{e_i},X_{e_i}^1}_{e_i} g_m(X) \nabla^{X_{e_i},X_{e_i}^2}_{e_i} g_m(X(s))\right] = 0$. Thus, to prove \eqref{Eq of lem: covariance}, thanks to the monotone convergence theorem, we remain to show that for each $i\geq 1$, $m \geq i$ and $s \in [0,1]$,
\begin{align}\label{Eq: influence convergence}
   \E\left[\nabla^{X_{e_i},X_{e_i}^1}_{e_i} g_m(X) \nabla^{X_{e_i},X_{e_i}^2}_{e_i} g_m(X(s))\right] \to \E\left[\nabla^{X_{e_i},X_{e_i}^1}_{e_i} f(X) \nabla^{X_{e_i},X_{e_i}^2}_{e_i} f(X(s))\right] \text{ as $m \to \infty$}. 
\end{align}
Indeed, for each $i \geq 1, $ $s \in [0,1]$, if $W$ and $Z$ are two copies of $X_{e_i}$ and independent of $(X_{e_j}(s))_{j \neq i}$ then by the Doob martingale theorem
\begin{align*}
 g_m \circ \sigma_{e_i}^{W}(X(s)) = \E\left[f \circ \sigma_{e_i}^{W}(X(s))|\cF_m \right] \to f \circ \sigma_{e_i}^{W}(X(s)); \, g_m \circ \sigma_{e_i}^{Z}(X(s)) = \E\left[f \circ \sigma_{e_i}^{Z}(X(s))|\cF_m \right] \to f \circ \sigma_{e_i}^{Z}(X(s)),
\end{align*}
in $L^2$ as $m \to \infty$. Therefore, 
\begin{align*}
    \E\left[g_m \circ \sigma_{e_i}^{Y}(X) g_m \circ \sigma_{e_i}^{Z}(X(s))\right] \to \E\left[f \circ \sigma_{e_i}^{Y}(X) f \circ \sigma_{e_i}^{Z}(X(s))\right], \text{ as } m \to \infty.
\end{align*}
This together with the definition of discrete derivative operator yields \eqref{Eq: influence convergence}.
\end{proof}
\section{Lower bound for the intersection of geodesics: Proof of Proposition \ref{Thm: intersection of geo}}
For each $z \in \Z^d$ and $N \geq 1$, we define a box 
    \begin{align*}
        B_N(z):= 2Nz+ [-N,N)^d \cap \Z^d.
    \end{align*}
It is clear that the family of boxes $(B_N(z))_{z \in \Z^d}$ builds a partition of $\Z^d$. Recall that $\omega'$ be an independent copy of $\omega$. For each $z\in \Z^d$, we define $\Bar{\omega}^{z}=(\Bar{\omega}^{z}_{e})_{e \in \cE}$ as
\begin{align*}
    \Bar{\omega}^z_e := \begin{cases}
        \omega_e', & \text{ if } e \in B_N(z); \\
        \omega_e, & \text{ if }   e \notin B_N(z).
    \end{cases}
\end{align*}
 In other words, $\Bar{\omega}^{z}$ is obtained by resampling $\omega$ in $B_N(z)$.
Notice that for any $z \in \Z^d$, $\omega$ and $\Bar{\omega}^z$ have the same distribution.

For convenience, in the remain part of proof, we write $\bO_\omega$, $\G_{\omega}$, $\rmD_\omega$ to emphasize the set of open paths, the set of geodesics, the graph distance in the environment $\omega$, respectively. Set $\kE_{n}:= \{ [0] \in \Lambda_{n^{1/2d}}(0), [nx] \in  \Lambda_{n^{1/2d}}(nx)\} $. For each $z \in \Z^d$, $\delta > 0$ and $u \neq v \in \partial B_N(z)$, we define
\begin{align*}
    \cA_z^{u,v}& := \{\text{there exists }\gamma \in \G_\omega([0],[nx]), \gamma \text{ crosses }B_N(z) \text{ at the first point $u$ and last point $v$}, \\
    & \qquad \qquad \qquad \qquad |\gamma_{u,v}| \geq (1+\delta)\|u-v\|_1 \};
\end{align*}
and
\begin{align*}
    \cB_z^{u,v}& := \{\text{there exists an open path }\pi_{u,v} \subset B_N(z) \cap \bO_{\Bar{\omega}^z}(B_N(z)) \text{ such that } |\pi_{u,v}|=\|u-v\|_1 
    \}.
\end{align*}
We note that the event $ \cB_z^{u,v}$ depends solely on the edge-weights inside $B_N(z)$ or $(\Bar{\omega}^z_e)_{e \in B_N(e)}$. Thus, $ \cB_z^{u,v}$ is independent of $\cA_z^{u,v}$ and $\kE_{n}$. 
We will show that for all $z \not \in \Lambda_{0,nx}:= \Lambda_{n^{1/2d}+2N}(0) \cup  \Lambda_{n^{1/2d}+2N}(nx) $,
\begin{align} \label{Eq: a and b}
    \kE_{n} \cap \cA_z^{u,v} \cap \cB_z^{u,v} \subset \{\exists \eta' \in \kP(B_N(z)): \forall \eta \in \G_{\Bar{\omega}^z} ([0],[nx]),\, \eta' \subset \eta\}.
\end{align}
Indeed, we suppose now that for $z \not \in \Lambda_{0,nx}$, $\kE_{n} \cap \cA_z^{u,v} \cap \cB_z^{u,v}$ occurs.  
Observe that the regularized points $[0],[nx]$ coincide in the environments $\omega$ and $\Bar{\omega}^z$ since $[0] \in \Lambda_{n^{1/2d}}(0), [nx] \in  \Lambda_{n^{1/2d}}(nx)$ and $ \{\Lambda_{n^{1/2d}}(0) \cup  \Lambda_{n^{1/2d}}(nx)\}\cap B_{N}(z) = \varnothing$.
Then, there exists $\gamma \in \G_\omega([0],[nx])$ crossing $B_N(z)$ at the first point $u$ and the last point $v$, and an open path $\pi_{u,v} \subset B_N(z) \cap \bO_{\Bar{\omega}^z}(B_N(z))$
such that $|\gamma_{u,v}| > \|u-v\|_1 = |\pi_{u,v}|$. Therefore,
\begin{align*}
\rmD_{\omega}(\gamma) = \rmtlD_{\omega}(0,nx)  & = |\gamma_{[0],u}|+|\gamma_{v,[nx]}|+|\gamma_{u,v}| 
 > |\gamma_{[0],u}|+|\gamma_{v,[nx]}|+|\pi_{u,v}|  \geq \rmtlD_{\Bar{\omega}^z}(0,nx).
\end{align*}
We remark  that for all paths $\eta$ such that $\eta \cap B_N(z) = \varnothing$,
$\rmD_{\omega}(\eta)= \rmD_{\Bar{\omega}^z}(\eta),$
and thus
\begin{align*}
   \rmD_{\Bar{\omega}^z}(\eta) =  \rmD_\omega(\eta) \geq  \rmtlD_\omega(0,nx) >  \rmtlD_{\Bar{\omega}^z}(0,nx).
\end{align*}
Hence, on the event $\kE_n \cap \cA_z^{u,v}\cap \cB_z^{u,v}$, one has for all paths $\eta \in \G_{\Bar{\omega}^z}([0],[nx])$, there exits $\eta' \in  \kP(B_N(z))$ such that $\eta' \subset \eta$ and so we claim \eqref{Eq: a and b}.

Using  \eqref{Eq: a and b} and the fact that $\omega$ and $\Bar{\omega}^z$ has the same distribution, we have
\begin{align}\label{Eq: AuvBuv}
    \sum_{z \not \in  \Lambda_{0,nx}} \sum_{u \neq v \in \partial B_{N}(z)} \pr( \kE_{n} \cap \cA_z^{u,v} & \cap  \cB_z^{u,v})  \leq   \sum_{z \in \Z^d} \sum_{u,v \in \partial B_{N}(z)} \pr(\exists \eta' \in \kP(B_N(z)): \forall \eta \in \G_{\Bar{\omega}^z}([0],[nx]), \eta' \subset \eta) \notag \\
    & = |\partial B_{N}(z)|^2 \sum_{z \in \Z^d}  \pr(\exists \gamma' \in \kP( B_N(z)): \forall \gamma \in \G_{\omega}([0],[nx]), \gamma' \subset \gamma) \notag \\
    & \leq |\partial B_{N}(z)|^2 \sum_{z \in \Z^d}  \pr(\exists e \in  B_N(z): \forall \gamma \in \G_{\omega}([0],[nx]), e \subset \gamma) \notag \\
     & \leq (2N+1)^{2d} \sum_{z \in \Z^d}  \pr(\exists e \in  B_N(z): \forall \gamma \in \G_{\omega}([0],[nx]), e \subset \gamma) \notag \\
     & \leq (2N+1)^{2d} \sum_{z \in \Z^d}  \sum_{e \in B_N(z)} \pr(\forall \gamma \in \G_{\omega}([0],[nx]), e \subset \gamma) \notag \\
     & = (2N+1)^{2d} \sum_{e \in \cE} \pr(\forall \gamma \in \G_{\omega}([0],[nx]), e \subset \gamma) \notag \\
     & = (2N+1)^{2d} \E[|\tlpi(0,nx)|].
\end{align}
We remain lower bound for the left hand side of \eqref{Eq: AuvBuv}. For each $z \in \Z^d$ and $\rho, \delta>0$, we say that a box $B_N(z)$ is $(\rho,\delta)$-\textit{good} if for all $u \neq v \in B_N(z)$ with $\rmD_\omega(u,v) < \infty$, then $\rho\|u-v\|_1 \geq \rmD(u,v) \geq (1+\delta)\|u-v\|_1$.  Notice that
$\pr(\cB_z^{u,v}) \geq p^{\|u-v\|_1} \geq p^{d(2N+1)}$ for all $u \neq v \in \partial B_N(z)$. By independent property, we have for all $n$ large enough
\begin{align}\label{lowbound auvzbuvz}
     \sum_{z \not \in  \Lambda_{0,nx}}&    \sum_{u \neq v \in \partial B_{N}(z)} \pr( \kE_{n} \cap \cA_z^{u,v} \cap  \cB_z^{u,v})  \geq   \sum_{z \not \in \Lambda_{0,nx}} \sum_{u \neq v \in \partial B_{N}(z)} \pr( \kE_{n} \cap \cA_z^{u,v})   \pr(\cB_z^{u,v}) \notag \\
     & \geq  p^{d(2N+1)} \sum_{z \not \in \Lambda_{0,nx}} \sum_{u \neq v \in \partial B_{N}(z)} \pr( \kE_{n}  \cap \{\exists \gamma \in \G_\omega([0],[nx]), \gamma \text{ crosses }B_N(z) \text{ at first point $u$ and last point $v$},\notag \\
     & \qquad \qquad \qquad \qquad \qquad \qquad \qquad \qquad \qquad \qquad |\gamma_{u,v}| \geq (1+\delta)\|u-v\|_1\})  \notag \\
     & \geq p^{d(2N+1)} \sum_{z \not \in \Lambda_{0,nx}} \pr(\kE_{n} \cap \{ \exists u \neq v \in \partial B_N(z), \exists \gamma \in \G_\omega([0],[nx]), \gamma \text{ crosses }B_N(z) \text{ at first point $u$ } \notag \\
     & \qquad \qquad \qquad \qquad \qquad \qquad \qquad \text{ and last point $v$, } \rmD_\omega (u,v) \geq (1+\delta)\|u- v\|_1\}) \notag \\
     & \geq \frac{1}{2}p^{d(2N+1)}\sum_{z \in \Z^d} \pr(\{\exists \gamma \in \G_\omega([0],[nx]), \gamma \text{ crosses }B_N(z), B_N(z) \text{ is $(\rho,\delta)$-good}\}|\kE_{n}) - \cO(n^{1/2}).
\end{align}
Here for the last line we have used that $\pr(\kE_{n}) \geq 1-C\exp(-n^{1/2d}/C)$ for some constant $C>0$. Given $[0] \in \Lambda_{n^{1/2d}}(0)$ and  $[nx] \in \Lambda_{n^{1/2d}}(nx)$, let $\eta$ be the geodesic from $[0]$ to $[nx]$ in environment $\omega$ with a deterministic rule breaking ties. The sum of the right hand side of $\eqref{lowbound auvzbuvz}$ is lower bounded by
\begin{align}\label{Eq: E of cross boxes}
    \sum_{z \in \Z^d} \pr(\eta  \text{ crosses }B_N(z), B_N(z) \text{ is $(\rho,\delta)$-good}|\kE_{n}) =  \E \left[\# \{\text{distinct $(\rho,\delta)$-good boxes that $\eta$ crosses}\}|\kE_{n} \right].
\end{align}

By \cite[Theorem 1.1]{jacquet2024strict}, Lemma \ref{Lem: large deviation of graph distance Dlambda} and union bound, there exist some constants $C,\rho, \delta>0$ such that for all $z \in \Z^d$ and $N$ large enough,
\begin{align}
    \pr(B_N(z) \text{ is $(\rho,\delta)$-good}) \geq 1 - C\exp(-N/C).
\end{align}
Notice that the event $\{B_N(z) \text{ is $(\rho,\delta)$-good}\}$ depends only on states of edges in $\Lambda_{\rho N}(z)$. Therefore, the variables $ \{\I(B_N(z) \text{ is $(\rho,\delta)$-good})\}_{z \in \Z^d}$ form a finitely dependent site percolation process. Using a standard Peierls argument and union bound (see, \cite[Lemma 5.2]{van1993inequalities}) to deduce that there exists $C>0$ such that  for all $N$ large enough
\begin{align*}
    \pr\left(\{\exists \text{ a path from }[0] \text{ to } [nx] \text{ that crosses at most } \frac{n}{CN} \text{ distinct $(\rho,\delta)$-good boxes}\}|\kE_n\right) \leq C\exp\left(\frac{-n}{CN}\right).
\end{align*}
It yields that  there exists a constant $c>0$ such that for all $N$ large enough,
\begin{align*}
     \E \left[\# \{\text{distinct $(\rho,\delta)$-good boxes that $\eta$ crosses}\}|\kE_n \right] \geq cn/N.
\end{align*}
Combining this with \eqref{Eq: E of cross boxes} and \eqref{lowbound auvzbuvz} give us 
\begin{align}
     \sum_{z \not \in \Lambda_{0,nx}} &  \sum_{u,v \in \partial B_{N}(z)} \pr( \kE_{n} \cap \cA_z^{u,v} \cap  \cB_z^{u,v})  \geq n/C,
\end{align}
for some constant $C>0$. Finally, merging this with \eqref{Eq: AuvBuv}, we have the desired result. 
\qed
%%%%%%%%%%%%%%%%%%%
\medskip
\bibliographystyle{alpha}
\bibliography{citation} 
\end{document}